\newcommand{\R}{\mathbf{R}}
\newcommand{\Z}{\mathbf{Z}}
\newcommand{\C}{\mathbf{C}}
\newcommand{\RN}{\mathbf{R}^N}
\newcommand{\ERN}{\mathbf{R}^{N+1}_+}
\newcommand{\N}{\mathbf{N}}
\newcommand{\e}{\varepsilon}
\newcommand{\rd}{\mathrm{d}}
\newcommand{\scrF}{\mathscr{F}}
\newcommand{\scrL}{\mathscr{L}}
\newcommand{\scrS}{\mathscr{S}}
\newcommand{\hu}{\widehat{u}}
\newcommand{\la}{\left\langle}
\newcommand{\ra}{\right\rangle}
\def\ov#1{\overline{#1}}
\newcommand{\Ha}{H^\alpha(\RN)}
\newcommand{\Har}{H^\alpha_{\rm r}(\RN)}
\newcommand{\Tr}{{\rm Tr\,}}
\def\wh#1{\widehat{#1}}
\theoremstyle{definition}
\newtheorem{definition}{Definition}[section]
\theoremstyle{plain}
\newtheorem{theorem}[definition]{Theorem}
\newtheorem{lemma}[definition]{Lemma}
\newtheorem{proposition}[definition]{Proposition}
\theoremstyle{remark}
\newtheorem{remark}[definition]{Remark}
\title[]{Existence of solutions of 
scalar field equations with fractional operator}
\author[N. Ikoma]{Norihisa Ikoma}
\date{}
\address{Faculty of Mathematics and Physics, 
Institute of Science and Engineering, 
Kanazawa University, 
Kakuma, Kanazawa, Ishikawa 9201192, JAPAN}
\email{ikoma@se.kanazawa-u.ac.jp}
\subjclass[2010]{%Primary~
35J60, 35S05}
\keywords{variational method, mountian pass theorem, 
symmetric mountain pass theorem, 
the Pohozaev identity}
\begin{document}
\begin{abstract}
In this paper, 
the existence of least energy solution and infinitely many solutions is proved 
for the equation $(1-\Delta)^\alpha u = f(u)$ in $\RN$ where 
$0<\alpha<1$, $N \geq 2$ and $f(s)$ is a Berestycki--Lions type nonlinearity. 
The characterization of the least energy by the mountain pass value 
is also considered and the existence of optimal path is shown. 
Finally, exploiting these results, 
the existence of positive solution for the equation 
$(1-\Delta)^\alpha u = f(x,u)$ in $\RN$ is established 
under suitable conditions on $f(x,s)$. 
\end{abstract}
\maketitle

\section{Introduction}\label{section:1}
In this paper, we are concerned with the existence of nontrivial solutions of 
	\begin{equation}\label{eq:1}
		\left\{\begin{aligned}
			(1 - \Delta  )^\alpha u &= f(x,u) \quad {\rm in}\ \RN,
			\\
			u & \in H^\alpha(\RN)
		\end{aligned}\right.
	\end{equation}
where $N \geq 2$ and $ 0 < \alpha < 1$. 
The fractional operator $(1-\Delta)^\alpha u$ is defined by 
	\[
		(1-\Delta)^\alpha u := \scrF^{-1}\left( ( 1 + 4\pi^2 |\xi|^2 )^\alpha \wh{u} (\xi )\right), \quad 
		\hat{u}(\xi) := (\mathscr{F}u)(\xi) = \int_{\RN} e^{-2\pi x \cdot \xi} u(x) \rd x
	\]
and $H^\alpha(\RN)$ a fractional Sobolev space consisted by real valued functions, that is, 
	\begin{equation}\label{eq:2}
		H^\alpha(\RN) := \left\{ u \in L^2(\RN, \R) \ \Big| \ 
		\| u \|_{\alpha}^2 := \int_{\RN} (4\pi^2 |\xi|^2 + 1)^\alpha |\hat{u}|^2 \rd \xi  
		< \infty \right\}. 
	\end{equation}
Throughout this paper, we deal with a weak solution of \eqref{eq:1}, namely, 
a function $u \in \Ha$ satisfying 
	\begin{equation}\label{eq:3}
		\int_{\RN} (4\pi^2 |\xi|^2 + 1)^\alpha \wh{u} (\xi) \ov{\wh{\varphi}(\xi)} \rd \xi 
		- \int_{\RN} f(x,u(x)) \varphi (x) \rd x = 0 \quad 
		{\rm for\ all\ } \varphi \in \Ha
	\end{equation}
where  $\ov{a}$ denotes the complex conjugate of $a$. 

The operator $(1-\Delta)^\alpha$ is related to 
the pseudo-relativistic Schr\"odinger operator $(m^2-\Delta)^{1/2} - m$ ($m > 0$) 
and recently a lot of attentions are paid for equations involving them. 
Here we refer to 
\cite{A-15,A-16-1,A-16-2,CS-16,CS-15-1,CS-15-2,CZN-11,CZN-13-1,CZN-13-2,FF-15,FV-15,
MZ-12,Se,TWY-12} and references therein 
for more details and physical context of $(1-\Delta)^\alpha$. 
In these papers, the authors study the existence of nontrivial solution and 
infinitely many solutions for the equations with 
$(m^2 - \Delta)^\alpha$ and various nonlinearities.

	This paper is especially motivated by two papers \cite{FV-15} and \cite{Se}. 
In \cite{FV-15}, the existence of positive solution of \eqref{eq:1} is proved 
under the following conditions on $f(x,s)$: 

\smallskip

(i) $f \in C(\RN \times \R, \R)$.  

(ii) For all $x \in \RN$, $f(x,s) \geq 0$ if $s \geq 0$ and $f(x,s) = 0$ if $s \leq 0$. 

(iii) The function $s \mapsto s^{-1}f(x,s)$ is increasing in $(0,\infty)$ for all $x \in \RN$. 

(iv) There are $1 < p < 2_\alpha^\ast -1 = (N+2\alpha) / (N-2\alpha)$ and $C>0$ such that 
$|f(x,s) | \leq C |s|^{p}$ for all $(x,s) \in \RN \times \R$. 

(v) There exists a $\mu > 2$ such that 
$0 < \mu F(x,s) \leq s f(x,s)$ for all $(x,s) \in \RN \times (0,\infty)$ where 
$F(x,s) := \int_0^s f(x,t) \rd t$. 

(vi) There exist continuous functions $\bar{f}(s)$ and $a(x)$ such that 
$\bar{f}$ satisfies (i)--(v) and $0 \leq f(x,s) - \bar{f}(s) \leq a(x) 
(|s| + |s|^p)$ for all $(x,s) \in \RN \times \R$, $a(x) \to 0$ as $|x| \to \infty$ 
and $\mathcal{L}^N( \{ x \in \RN \ |\  \text{$f(x,s) > \bar{f}(s)$ for all $s>0$}  \} ) > 0$ 
where $\mathcal{L}^N$ denotes the $n$-dimensional Lebesgue measure.

\smallskip

On the other hand, in \cite{Se}, 
the author obtains a nontrivial solution of \eqref{eq:1} with 
$f(x,s) = \lambda b(x) |u|^{p-1} u + c(x) |u|^{q-1} u$ 
under different conditions on $b(x), c(x), p,q$ where $\lambda > 0$ is a constant. 
Among other things, under $1<p,q< 2_\alpha^\ast - 1$ and 
some strict inequality for the mountain pass value (the infimum of the functional 
on the Nehari manifold), 
the existence of positive solution of \eqref{eq:1} is shown. 
However, there is no specific information when the strict inequality holds.

	Our aim of this paper is to observe whether we can handle the more general 
nonlinearity $f(x,s)$ in \eqref{eq:1} and obtain a positive solution. 
First, we treat the case where $f(x,s) = f(s)$ is a Berestycki--Lions type 
nonlinearity, that is, $f(s)$ satisfying (f1)--(f4) below. 
These conditions are introduced in \cite{BL-83-1,BL-83-2} (cf. \cite{BGK-83}) 
for the case $\alpha = 1$ and almost optimal for the existence of 
nontrivial solution. We shall prove the existence of infinitely many solutions and 
least energy solutions with the Pohozaev identity, 
that the least energy coincides with the mountain pass value 
and that there is an optimal path. 
These properties are shown in \cite{JT-03-1,HIT-10} for the case $\alpha = 1$. 
Second, we deal with the case $f(x,s)$ depends on $x$. 
Here, exploiting the optimal path and characterization by the mountain pass value, 
we show the existence of positive solution of \eqref{eq:1}, which 
generalizes the result in \cite{FV-15} and 
enables us to find a simpler sufficient condition than that of \cite{Se} in some case. 
See the comments after Remark \ref{definition:1.4}.

	As stated in the above, we first consider the case $f(x,s) \equiv f(s)$ and 
\eqref{eq:1} becomes 
	\begin{equation}\label{eq:4}
		\left\{\begin{aligned}
			(1 - \Delta  )^\alpha u &= f(u) \quad {\rm in}\ \RN,
			\\
			u & \in H^\alpha(\RN).
		\end{aligned}\right.
	\end{equation}
For \eqref{eq:4}, we assume that 
the nonlinearity $f$ is a Berestycki--Lions type (\cite{BL-83-1,BL-83-2}): 
	\begin{enumerate}
		\item[(f1)] $f \in C(\R,\R)$ and $f(s)$ is odd. 
		\item[(f2)] $ \displaystyle - \infty < \liminf_{s \to 0} 
			\frac{f(s)}{s} \leq \limsup_{s \to 0} \frac{f(s)}{s} < 1$. 
		\item[(f3)] 
			\[
				\lim_{|s| \to \infty} \frac{|f(s)|}{|s|^{2_\alpha^\ast-1}} = 0 
				\quad {\rm where} \ 
				2_\alpha^\ast := \frac{2N}{N-2\alpha}.
			\]
		\item[(f4)] There exists an $s_0>0$ such that 
			\[
				F(s_0) - \frac{1}{2}s_0^2 > 0 \quad 
				{\rm where} \ F(s) := \int_0^s f(t) \rd t.
			\]
	\end{enumerate}
Notice that under (f1)--(f3), \eqref{eq:4} has variational structure, namely, 
a solution of \eqref{eq:4} is characterized as a critical point of 
the following functional (see Lemma \ref{definition:2.1})
	\begin{equation}\label{eq:5}
		I(u) := \frac{1}{2} \| u \|_\alpha^2 - \int_{\RN} F(u) \rd x 
		\in C^1(H^\alpha(\RN), \R).
	\end{equation}
Our first result is the existence of infinitely many solutions of \eqref{eq:4} 
and the characterization of least energy solutions by the mountain pass structure.

	\begin{theorem}\label{definition:1.1}
		Assume $N \geq 2$, $0 < \alpha < 1$ and \emph{(f1)--(f4)}. 
		
		\emph{(i)} There exist infinitely many solutions $(u_n)_{n=1}^\infty$ 
		of \eqref{eq:4} satisfying $I(u_n) \to \infty$ and 
		the Pohozaev identity $P(u_n) = 0$ where 
			\begin{equation}\label{eq:6}
				P(u) := \frac{N-2\alpha}{2} \int_{\RN} \left( 1 + 4 \pi^2 |\xi|^2 \right)^\alpha 
				| \hu |^2 \rd \xi - N \int_{\RN} F(u) \rd x  
				+ \alpha \int_{\RN} \left( 1 + 4\pi^2 |\xi|^2 \right)^{\alpha - 1} 
				| \hu |^2 \rd \xi. 
			\end{equation}
		Moreover, $u_1(x) > 0$ for all $x \in \RN$. 
		
		\emph{(ii)} Assume either $\alpha > 1/2$ or $f(s)$ is locally Lipschitz continuous. 
		Then every solution of \eqref{eq:4} satisfies the Pohozaev identity $P(u)=0$. 
		
		\emph{(iii)} 
		For the following quantities 
						\[
							\begin{aligned}
								c_{\rm MP} &:= \inf_{\gamma \in \Gamma} 
								\max_{0 \leq t \leq 1} I (\gamma(t)), 
								\quad 
								\Gamma := \{ \gamma \in C([0,1], H^\alpha(\RN)) 
								\ |\ \gamma(0) = 0, \ I(\gamma(1)) < 0 \},
								\\
								c_{\rm LES} &:= 
								\inf \left\{ I(u) \ |\ u \not \equiv 0, \ I'(u)=0, \  P(u) = 0 \right\},
								\\
								S_{\rm LES} &:= \left\{ u \in H^\alpha(\RN) \ |\ 
								u \not \equiv 0, \ 
								I'(u) = 0, \ P(u) = 0, \ I(u) = c_{\rm LES} \right\},
							\end{aligned}
						\]
					we have $ S_{\rm LES} \neq \emptyset$ and 
					$c_{\rm MP} = c_{\rm LES} > 0$. 
					Furthermore, for every $v \in S_{\rm LES}$, 
					there exists a $\gamma_v \in \Gamma$ such that 
					$\| \gamma_v(t) \|_{L^\infty} = \| v \|_{L^\infty}$ 
					for $0 < t \leq 1$ and 
						\[
							\max_{0 \leq t \leq 1} I(\gamma_v(1)) = I(v) = c_{\rm LES}. 
						\]
	\end{theorem}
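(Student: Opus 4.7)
The plan is to adapt to the fractional setting the strategies developed for the classical ($\alpha=1$) Berestycki--Lions problem, especially those of \cite{BL-83-1,BL-83-2,JT-03-1,HIT-10}, and to treat the three parts in order.

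For part (i), I would work in the radial subspace $\Har \subset \Ha$, for which a fractional Strauss--type lemma gives a compact embedding into $L^p(\RN)$ for $2 < p < 2^\ast_\alpha$. Since (f1)--(f4) do not provide an Ambrosetti--Rabinowitz condition, the boundedness of Palais--Smale sequences is not automatic. I would therefore introduce the augmented functional
\[
\tilde I(u,\theta) := I\bigl( u(e^{-\theta}\cdot) \bigr) \qquad \text{on } \Har \times \R,
\]
which remains $\mathbf{Z}_2$--symmetric in $u$ by (f1). Its essential feature is that any Palais--Smale sequence for $\tilde I$ at the $n$-th symmetric minimax level $c_n$, built via Krasnoselskii genus, projects to an $\Ha$--sequence with $P(u_n)\to 0$; together with $I(u_n)\to c_n$ this forces $H^\alpha$--boundedness. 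Radial compactness then yields a nontrivial critical point satisfying $P=0$. Condition (f4) is used to construct, for each $n$, an $n$-dimensional subspace on which $I$ stays nonpositive far from the origin, which ensures $c_n\to\infty$. Positivity of $u_1$ is obtained by first replacing $f(s)$ with $f(s)\chi_{\{s>0\}}$, rerunning the argument, and then testing with $u_-$ using a maximum principle available for $(1-\Delta)^\alpha$.

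For part (ii), the hypotheses ($\alpha>1/2$ or $f$ locally Lipschitz) permit a bootstrap through Bessel potential inversion of $(1-\Delta)^\alpha$, which is enough to make sense of the pairing $\la I'(u), x\cdot\nabla u\ra$. The identity $P(u)=0$ then follows from the Fourier--side computation
\[
\frac{\rd}{\rd s}\bigg|_{s=0} I\bigl( u(e^{-s}\cdot) \bigr) = P(u),
\]
which vanishes whenever $I'(u)=0$.

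The heart of the argument is part (iii). That $c_{\rm MP}>0$ is standard mountain pass geometry coming from (f2) and (f3). I would first establish the optimal path together with the inequality $c_{\rm MP} \leq c_{\rm LES}$: given $v \in S_{\rm LES}$, set $v_s(x) := v(x/s)$ for $s>0$ and $v_0 := 0$, which preserves $\|v\|_{L^\infty}$. A direct Fourier computation gives $g(s) := I(v_s) \to 0$ as $s\to 0^+$, $g(s)\to -\infty$ as $s\to\infty$, and $g'(s)=0$ iff a rescaled Pohozaev relation holds; since $P(v)=0$, $s=1$ is a critical point. Monotonicity in $s$ of the Fourier multipliers $(1+4\pi^2 s^{-2}|\xi|^2)^{\alpha}$ and $(1+4\pi^2 s^{-2}|\xi|^2)^{\alpha-1}$, together with a convexity argument, then shows $s=1$ is the unique maximizer. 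Choosing $T>1$ with $g(T)<0$ and defining $\gamma_v(t):=v_{Tt}$ produces the path. The reverse inequality $c_{\rm MP}\geq c_{\rm LES}$ will be the main obstacle: I would adapt Jeanjean's monotonicity trick to the augmented functional $\tilde I$ of part~(i) to produce a bounded sequence $(u_n)\subset \Ha$ with $I(u_n)\to c_{\rm MP}$, $I'(u_n)\to 0$ and $P(u_n)\to 0$, and recover a nontrivial limit via radial concentration-compactness. The limit belongs to $S_{\rm LES}$ with energy $c_{\rm MP}$, giving $c_{\rm MP}=c_{\rm LES}$ and $S_{\rm LES}\neq\emptyset$. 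The technical difficulty specific to the fractional case is the extra term $\alpha\int(1+4\pi^2|\xi|^2)^{\alpha-1}|\hu|^2\,\rd\xi$ in $P(u)$, which must be carefully tracked throughout every scaling and uniqueness argument.
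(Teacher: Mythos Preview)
Your overall architecture matches the paper's: augmented functional $\tilde I(\theta,u)=I(u(e^{-\theta}\cdot))$ in the radial space, Ekeland at the symmetric minimax levels $c_n$, the scaling path $t\mapsto v(\cdot/t)$ for the optimal path, and Bessel--potential bootstrap plus the pairing with $x\cdot\nabla u$ for part~(ii). Two steps, however, are genuine gaps in the proposal.

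\textbf{Part (i), divergence of $c_n$.} You write that the $n$--dimensional family built from (f4) ``ensures $c_n\to\infty$''. This is backwards: that family (as in \cite{BL-83-2}) only gives admissible odd maps $\gamma_n:\partial D_n\to\Har$ with $I(\gamma_n(\sigma))<0$, i.e.\ it shows $\Gamma_n\neq\emptyset$ and $c_n<\infty$. Without an Ambrosetti--Rabinowitz condition there is no a priori reason for $c_n\to\infty$. The paper handles this by building a comparison functional $\bar I(u)=\tfrac{\delta_0}{2}\|u\|_\alpha^2-\int\bar H(u)$ with $\bar I\le I$, where $\bar H$ is a modified primitive satisfying a global AR inequality $(p_0+1)\bar H(s)\le s\bar h(s)$ (Lemma~2.2). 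For $\bar I$ the Palais--Smale condition holds, so its symmetric minimax levels $d_n$ diverge by the standard intersection/genus argument, and $d_n\le c_n$ forces $c_n\to\infty$ (Lemma~2.5). Your outline is missing this comparison step or an equivalent device.

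\textbf{Part (iii), the inequality $c_{\rm MP}\ge c_{\rm LES}$.} You produce a bounded Palais--Smale sequence $(u_n)\subset\Ha$ at level $c_{\rm MP}$ and then invoke ``radial concentration--compactness''. These are incompatible: radial compactness applies only to sequences in $\Har$, which would arise from paths in $\Har$ and hence at the \emph{radial} level $c_{\rm MP,r}$, not $c_{\rm MP}$. The inequality $c_{\rm MP}\le c_{\rm MP,r}$ is trivial but the reverse is not, and without it your scheme only yields $c_{\rm MP}\le c_{\rm LES}\le c_{\rm MP,r}$. The paper closes this gap by a direct path argument (Lemma~3.4): first show $[I<0]$ is path--connected, so $c_{\rm MP}$ equals the fixed--endpoint value $d$; then for any $\eta\in\Gamma$ replace $\eta(t)$ by its absolute value (using $\|\,|u|\,\|_\alpha\le\|u\|_\alpha$) and then by its Schwarz symmetrization $(\eta(t))^\ast$. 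Both operations do not increase $I$, and symmetrization preserves continuity of the path by \cite{AL-89}, so $c_{\rm MP}=c_{\rm MP,r}=c_1$. Combined with Proposition~3.2 ($c_{\rm LES}\le c_1$) and the optimal path ($c_{\rm MP}\le c_{\rm LES}$) this gives $c_{\rm MP}=c_{\rm LES}$ and $S_{\rm LES}\neq\emptyset$. If you prefer to avoid symmetrization, you would need a full (non--radial) concentration--compactness analysis modulo translations for the autonomous functional, which is more work than you indicate.

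A smaller point: for the uniqueness of the maximum of $g(s)=I(v_s)$, the paper does not argue via monotonicity of the multipliers plus convexity, but computes $g'(s)=s^{N-1}h(s)$ and shows directly that $h'(s)<0$ for all $s>0$ using $N\ge2$ and $N>2\alpha$; since $P(v)=0$ gives $h(1)=0$, this forces the unique maximum at $s=1$ (Proposition~3.3). Your sketch is plausible but the paper's explicit computation is what makes the extra term $\alpha\int(1+4\pi^2|\xi|^2)^{\alpha-1}|\hat v|^2\,\rd\xi$ harmless.
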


	\begin{remark}\label{definition:1.2}
(i) 
To the author's knowledge, it is not known that 
every weak solution of \eqref{eq:4} satisfies the Pohozaev identity. 
In Proposition \ref{definition:3.6}, we shall show that if a weak solution of \eqref{eq:4} 
is of class $C^1$ with bounded derivatives, then the Pohozaev identity holds.

(ii)
By Theorem \ref{definition:1.1} (ii), when $\alpha > 1/2$ or 
$f(s)$ is locally Lipschitz, we have 
	\[
		\begin{aligned}
			c_{\rm LES} &= \inf \left\{ I(u) \ |\ u \not \equiv 0, \ I'(u) = 0  \right\}, 
			\\
			S_{\rm LES} &= \left\{ u \in H^\alpha(\RN) \ |\ 
			u \not \equiv 0, \ I'(u) = 0, \ I(u) = c_{\rm LES}  \right\}. 
		\end{aligned}
	\]
Thus $c_{\rm LES}$ and $S_{\rm LES}$ coincide with the least energy 
and a set of all least energy solutions in usual sense.

(iii) By the simple scaling, we may deal with 
\[
(m^2 - \Delta)^\alpha u = f(u) \quad {\rm in} \ \RN
\]
where $m>0$.  Indeed, for $m>0$, 
$u(x)$ is a solution of \eqref{eq:4} if and only if $v(x) := u(m^{-1} x)$ satisfies 
\[
(1 - \Delta)^\alpha v = m^{-2\alpha} f(v(x)) \quad {\rm in} \ \RN. 
\]
	\end{remark}

Next, we use Theorem \ref{definition:1.1} to obtain a positive solution 
of \eqref{eq:1}. For $f(x,s)$, assume that 
	\begin{enumerate}
		\item[(F1)] 
		$f(x,s) = - V(x) s + g(x,s)$ where $V \in C(\RN,\R)$, 
		$g \in C(\RN \times \R, \R)$ and $g(x,-s) = - g(x,s)$ 
		for every $(x,s) \in \RN \times \R$.   
		\item[(F2)] 
			\[
				-1 < \inf_{x \in \RN} V(x) \quad 
				{\rm and} \quad 
				\lim_{s \to 0} \sup_{x\in \RN} 
				\left|\frac{g(x,s)}{s}\right| = 0.
			\]
		\item[(F3)] 
			\[
				\lim_{|s| \to \infty} 
				\sup_{x \in \RN} \frac{|g(x,s)|}{|s|^{2_\alpha^\ast-1}} = 0.
			\]
		\item[(F4)] 
			There exist $V_\infty > -1$ and 
			$g_\infty(s) \in C(\R,\R)$ such that 
			as $|x| \to \infty$, 
			$V(x) \to V_\infty$ and $g(x,s) \to g_\infty(s)$ in $L^\infty_{\rm loc}(\RN)$ 
			where $g_\infty(s)$ is locally Lipschitz continuous provided $0<\alpha \leq 1/2$. 
			Moreover, $0 \leq F(x,s) - F_\infty(s)$ holds 
			for all $x \in \RN$ and $s \in \R$ where 
			$F(x,s) := \int_0^s f(x,t) \rd t$, 
			$f_\infty(s) := -V_\infty s + g_\infty(s)$ and 
			$F_\infty(s) := \int_0^s f_\infty(t) \rd t$. 
		\item[(F5)] 
			There exist $\mu > 2$ and $s_1>0$ such that 
				\[
					0< \mu G(x,s) \leq g(x,s) s \quad {\rm for\ each} \ (x,s) \in \RN 
					\times \R \setminus \{0\}, 
					\quad 
					\inf_{x \in \RN} G(x,s_1) > 0
				\]
			where $G(x,s) := \int_0^s g(x,t) \rd t$. 
	\end{enumerate}

Under these conditions, we have

	\begin{theorem}\label{definition:1.3}
		Assume \emph{(F1)--(F5)}. 
		Then \eqref{eq:1} admits a positive solution. 
	\end{theorem}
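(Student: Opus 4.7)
The plan is to construct a nonzero critical point of
\[
I(u) := \frac{1}{2}\|u\|_\alpha^2 - \int_{\RN} F(x,u)\, \rd x,
\qquad F(x,s) = -\tfrac{V(x)}{2}s^2 + G(x,s),
\]
via the mountain pass theorem, to compare the resulting minimax level $c_{\rm MP}$ with the analogous level $c_\infty$ for the limit problem, and finally to turn the solution into a positive one via a truncation trick. For the geometry of $I$, the key observation is that $\inf V > -1$ makes the quadratic form $\tfrac12\|u\|_\alpha^2 + \tfrac12\int V u^2\, \rd x$ equivalent to $\|u\|_\alpha^2$ (precisely, bounded below by $(1 + \inf V)\|u\|_\alpha^2$ via Plancherel), which together with (F2)--(F3) makes $0$ a strict local minimum of $I$; the Ambrosetti--Rabinowitz condition (F5) supplies a direction $t\varphi$ along which $I(t\varphi) \to -\infty$, and, via the standard computation $I(u_n) - \tfrac{1}{\mu}I'(u_n)[u_n] \ge (\tfrac{1}{2} - \tfrac{1}{\mu})(1 + \inf V)\|u_n\|_\alpha^2$, gives the boundedness of any Cerami sequence at the mountain pass level $c_{\rm MP}$.

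Next I would pin down $c_{\rm MP}$ by comparison with the limit problem. Verify that $f_\infty(s) = -V_\infty s + g_\infty(s)$ satisfies (f1)--(f4) (oddness from (F1), (f2) from $V_\infty > -1$ and (F2), (f3) from (F3), (f4) from AR in (F5)); the Lipschitz hypothesis on $g_\infty$ when $\alpha \le 1/2$ then lets Theorem~\ref{definition:1.1}(ii) apply to the limit problem, so every non-trivial critical point of $I_\infty(u) := \tfrac12\|u\|_\alpha^2 - \int_{\RN} F_\infty(u)\, \rd x$ has energy at least $c_\infty := c_{\rm LES,\infty}$. Theorem~\ref{definition:1.1}(iii) simultaneously furnishes a positive ground state $v_\infty$ and an optimal path $\gamma_\infty$ satisfying $\max_t I_\infty(\gamma_\infty(t)) = c_\infty$ and $\|\gamma_\infty(t)\|_{L^\infty} = \|v_\infty\|_{L^\infty}$ for $t \in (0,1]$. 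The pointwise inequality $F(x,s) \ge F_\infty(s)$ from (F4) gives $I(u) \le I_\infty(u)$ for every $u \in \Ha$, hence $c_{\rm MP} \le c_\infty$ by testing along $\gamma_\infty$. If moreover $V \equiv V_\infty$ and $g \equiv g_\infty$, the problem is autonomous and Theorem~\ref{definition:1.1}(i) already supplies a positive solution; otherwise continuity produces an open set where $F(\cdot,s) > F_\infty(s)$ for some open range of $s$, and translating the path as $\gamma_\infty(t)(\cdot - y)$ for a suitable $y$ pushes its nontrivial profile (of prescribed $L^\infty$ norm) into that region to yield the strict inequality $c_{\rm MP} < c_\infty$.

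With this strict gap in hand, standard arguments extract from the bounded Cerami sequence $(u_n)$ a weak limit $u_0$ that is a critical point of $I$; the point is to show $u_0 \not\equiv 0$. Suppose the contrary: the AR condition together with Lions' lemma rules out vanishing (vanishing would force $\int G(x,u_n)\, \rd x \to 0$ and hence $c_{\rm MP} = 0$), so there exist $y_n \in \RN$ and $R>0$ with $\int_{B_R(y_n)} u_n^2\, \rd x \ge \delta > 0$. The assumption $u_0 = 0$ forces $|y_n| \to \infty$; then the shifted sequence $u_n(\cdot + y_n) \rightharpoonup \tilde u \ne 0$ and, using the local uniform convergence of (F4) in the test-function form of the Euler--Lagrange equation, $\tilde u$ solves the limit equation with $I_\infty(\tilde u) \le c_{\rm MP} < c_\infty$, contradicting Theorem~\ref{definition:1.1}(ii). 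Hence $u_0$ is a nontrivial critical point of $I$. For positivity, replace $g(x,s)$ by $g(x,s_+)$ (legitimate since $g$ is odd by (F1) and all previous steps survive on the cone $\{u \ge 0\}$); testing the modified equation with $(u_0)_-$ and using that $(1-\Delta)^\alpha - I$ has a representation by a positive convolution kernel (so $((1-\Delta)^\alpha u_{0,+})(x) \le 0$ at points where $u_{0,+}(x) = 0$) together with $\inf(1+V) > 0$ forces $(u_0)_- \equiv 0$, and the strong maximum principle for $(1-\Delta)^\alpha$ upgrades $u_0 \ge 0$ to $u_0 > 0$ on $\RN$.

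The main obstacle I foresee is the strict inequality $c_{\rm MP} < c_\infty$: the pointwise comparison $F \ge F_\infty$ from (F4) only yields $c_{\rm MP} \le c_\infty$, and it is precisely the $L^\infty$-preserving optimal path of Theorem~\ref{definition:1.1}(iii) that allows one to upgrade this to a strict inequality via translation into a region where the perturbation is genuine. Without this gap the Cerami sequence could concentrate at infinity on a profile of the limit problem, and the mountain pass approach would fail to produce a solution of the perturbed equation.
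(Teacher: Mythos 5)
Your overall plan — mountain pass geometry, Ambrosetti--Rabinowitz for boundedness of the Palais--Smale sequence, comparison with the autonomous limit level via $F(x,s) \geq F_\infty(s)$, concentration-compactness to recover a nontrivial weak limit, truncation for positivity — is in substance the paper's. The gap is in how you dispose of the boundary case $c_{\rm MP} = c_\infty$. You assert a dichotomy: either $V \equiv V_\infty$ and $g \equiv g_\infty$ (fully autonomous), or else by continuity there is an open set where $F(\cdot,s) > F_\infty(s)$, and translating the $L^\infty$-preserving optimal path $\gamma_\infty$ into that region gives the strict inequality $c_{\rm MP} < c_\infty$. This dichotomy is false. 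The optimal path of Theorem~\ref{definition:1.1}(iii) consists of profiles satisfying $\|\gamma_\infty(t)\|_{L^\infty} = \|\omega\|_{L^\infty}$ for $t > 0$, so translating it can only detect the difference $F(x,s) - F_\infty(s)$ for $|s| \leq \|\omega\|_{L^\infty}$. It is perfectly consistent with (F1)--(F5) that $F(x,s) = F_\infty(s)$ for all $x$ and all $|s| \leq \|\omega\|_{L^\infty}$, but $F > F_\infty$ for larger $|s|$: the problem is then genuinely non-autonomous, yet $J = J_\infty$ along every translate of $\gamma_\infty$, so $c_{\rm MP} = c_\infty$ and no strict gap can be extracted. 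Your concentration-compactness step is then left without the needed strict inequality, and your "otherwise" branch cannot be completed.

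The paper handles the equality case without attempting to rule it out. Writing $d_\infty$ for the limit mountain pass level: if $d_{\rm MP} = d_\infty$, then for each $z \in \RN$ the translate $\gamma_\omega(\cdot)(\cdot - z)$ lies in the admissible class and the chain $d_\infty = d_{\rm MP} \leq J(\gamma_\omega(t_z)(\cdot - z)) \leq J_\infty(\gamma_\omega(t_z)(\cdot - z)) \leq J_\infty(\omega) = d_\infty$ forces, by the strict-maximum property of the optimal path, $\gamma_\omega(t_z) = \omega$ and $\int_{\RN} \bigl[F(x, \omega(x - z)) - F_\infty(\omega(x - z))\bigr] \rd x = 0$. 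Since $F \geq F_\infty$ and $z$ is arbitrary, this yields $F(x,s) = F_\infty(s)$ for every $x$ and $|s| \leq \|\omega\|_{L^\infty}$, and hence the positive autonomous ground state $\omega$ itself solves \eqref{eq:1}. So the role of the optimal path is not to exclude $c_{\rm MP} = c_\infty$, but to produce a solution directly when equality holds; you should replace the false "autonomous or strict gap" dichotomy by this two-case argument (strict inequality: concentration-compactness applies; equality: $\omega$ is the desired solution).
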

	\begin{remark}\label{definition:1.4}
(i) In (F4), when $0<\alpha \leq 1/2$, we assume that 
$g_\infty(s)$ is local Lipschitz in $s$, however, not for $g(x,s)$. 

(ii) (F5) is mainly used to find a bounded Palais--Smale 
sequence. If we assume the existence of bounded Palais--Smale sequence 
at the mountain pass level, we can show the existence of nontrivial solution 
of \eqref{eq:1} in the more general setting. See Proposition \ref{definition:4.3}. 

(iii) Another way to obtain bounded Palais--Smale sequences 
is to exploit the Pohozaev identity. When $\alpha = 1$, for example, we refer to \cite{AP-09,JT-05}. 
When $0<\alpha < 1$, we also have the Pohozaev identity (see \eqref{eq:46}) 
and it might be useful to get a bounded Palais--Smale sequence in the case $0<\alpha < 1$. 
	\end{remark}

	Now we compare our result with the previous results. 
We first consider \eqref{eq:4}. 
The most related results are \cite{A-16-2,CS-16,FV-15,TWY-12}. 
In these papers, the authors study \eqref{eq:4} with $f(s) = |s|^{p-1}s$ or 
$f(s) = (1-\mu ) s  + |s|^{p-1} s$ where 
$1 < p < 2_\alpha^\ast - 1$ and $\mu>0$, and show the existence of least energy solution 
(or ground state solution) and infinitely many solutions. 
Clearly, Theorem \ref{definition:1.1} improves these results. 
Furthermore, 
in \cite{A-16-2,Se}, the authors raise a question that 
one can prove the existence of least energy solution and 
infinitely many solutions of \eqref{eq:4} with general nonlinearity. 
Theorem \ref{definition:1.1} answers this question. 
For the fractional Laplacian $(-\Delta)^\alpha$ with general nonlinearity, 
we refer to the work \cite{CW-13}.

	On the other hand, for \eqref{eq:1}, the existence of positive solution 
is proved in \cite{FV-15,Se}. 
It is easily checked that 
Theorem \ref{definition:1.3} is a generalization of the result in \cite{FV-15}. 
In addition, suppose that $f(x,s) = \lambda b(x) |u|^{p-1} u + c(x) |u|^{q-1} u$, 
$b(x) \geq \underline{b} = \lim_{|x| \to \infty} b(x) > 0$ and 
$ c (x) \geq 0 = \lim_{|x| \to \infty} c(x)$. 
Then we can apply Theorem \ref{definition:1.3} 
to get a positive solution of \eqref{eq:1} for every $\lambda >0$ and 
$1 < p,q < 2_\alpha^\ast - 1$. 
Hence, in this case, we find the simpler sufficient condition than 
that of \cite{Se} for the existence of nontrivial solution

Finally, we comment on the proofs of Theorems \ref{definition:1.1} and 
\ref{definition:1.3}. Our arguments are variational and 
we find critical points of $I$ defined by \eqref{eq:5} and 
	\[
		J(u) := \frac{1}{2} \| u \|_\alpha^2 
		- \int_{\RN} F(x,u(x)) \rd x \in C^1(\Ha, \R). 
	\]
To show the existence of critical points of $I$, 
we use the arguments in \cite{J-97,HIT-10} and 
introduce the augmented functional based on the scaling:
	\[
		\tilde{I}(\theta, u) := I( u ( \cdot/ e^{\theta}  ) ) \in C^1
		( \R \times \Ha , \R ). 
	\]
As already pointed out in \cite{A-16-2,Se}, 
$-\Delta$ and $(-\Delta)^\alpha$ are homogenous in scaling, 
however, $(1-\Delta)^\alpha$ is not. 
Nevertheless, $\tilde{I}$ still helps us to find bounded 
Palais--Smale sequences.

	Next, we turn to Theorem \ref{definition:1.3}. 
We use the idea of the concentration compactness lemma 
(\cite{L-84-1,L-84-2,JT-04}) and compare the mountain pass values. 
First, we treat the general setting and 
exploiting Theorem \ref{definition:1.1}, 
we prove that it suffices to find a bounded Palais--Smale sequence of $J$ 
at the mountain pass level. To this end, we observe the behavior 
of any bounded Palais--Smale sequence of $J$. 
After that, we shall prove Theorem \ref{definition:1.3}.

	This paper is organized as follows. 
In sections \ref{section:2} and \ref{section:3}, we introduce 
the augmented functional $\tilde{I}(\theta, u)$, prove its properties and 
show Theorem \ref{definition:1.1}. 
Section \ref{section:4} is devoted to proving Theorem \ref{definition:1.3}. 
In Appendix, we collect some technical lemmas and prove 
a Br\'ezis-Kato type result (Proposition \ref{definition:3.5}).

% % % % % % % % % % % % % % % % % % % % % % % % % % % % % % % % % % %
% % % % % % % % % % % % % % % % % % % % % % % % % % % % % % % % % % %
% % % % % % % % % % % % % % % % % % % % % % % % % % % % % % % % % % %

\section{Variatonal setting}
\label{section:2}

% % % % % % % % % % % % % % % % % % % % % % % % % % % % % % % % % % %
% % % % % % % % % % % % % % % % % % % % % % % % % % % % % % % % % % %
% % % % % % % % % % % % % % % % % % % % % % % % % % % % % % % % % % %

To prove Theorems \ref{definition:1.1} and \ref{definition:1.3}, 
we employ the variational methods. We first consider \eqref{eq:4} and 
prove Theorem \ref{definition:1.1}. 
In what follows, we always assume (f1)--(f4) and use the following notations: 
For $K= \R, \C$, $\scrS(\RN, K)$ denotes the Schwartz class consisting of 
$K$-valued functions. Moreover, set 
$\Har := \{ u \in \Ha \ |\ \text{$u$ is radial} \}$. 
Recalling \eqref{eq:2}, we begin with the following lemma. 

	\begin{lemma}\label{definition:2.1}
\emph{(i)} The space $H^\alpha(\RN)$ is a Hilbert space over $\R$ 
under the following scalar product: 
	\[
		\la u , v \ra_\alpha := 
		\int_{\RN} (4\pi^2 |\xi|^2 + 1)^\alpha \hat{u}(\xi) 
		\ov{\hat{v}(\xi)} \rd \xi .
	\]
Notice that $\| u \|_\alpha^2 = \la u , u \ra_\alpha$. 

\emph{(ii) (\cite{L-82})} 
The embedding $\Har \subset L^p(\RN)$ is compact for $2 < p < 2_\alpha^\ast$. 

\emph{(iii)} The functional $I$ in \eqref{eq:5} belongs to $C^1(\Ha,\R)$ and 
	\begin{equation}\label{eq:7}
		I'(u) \varphi = \int_{\RN} (1 + 4\pi^2 |\xi|^2)^\alpha 
		\hat{u} \ov{\hat{\varphi}} \rd \xi - \int_{\RN} f(u) \varphi \rd x 
		\quad {\rm for\ all}\ \varphi \in H^\alpha(\RN).
	\end{equation}
In particular, if $I'(u) = 0$, then $u$ satisfies \eqref{eq:4} 
in $(\scrS(\RN,\C))^\ast$, that is, for all $\varphi \in  \scrS(\RN, \C)$, 
	\begin{equation}\label{eq:8}
		\la (1-\Delta)^\alpha u , \varphi \ra 
		= \int_{\RN} (1 + 4\pi^2 |\xi|^2)^\alpha \hat{u} (\xi) (\scrF^{-1} \varphi) (\xi) 
		\rd \xi = \int_{\RN} f(u) \varphi \rd x.  
	\end{equation}
The same holds true for $I|_{\Har}$. 
	\end{lemma}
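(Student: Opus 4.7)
The proof breaks naturally into the three parts of the statement, each of which is essentially standard.

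Part (i). The only point that needs verification is that $\la u,v\ra_\alpha$ is real-valued when $u,v$ are real-valued; this follows from the Hermitian symmetry $\hat{u}(-\xi)=\ov{\hat{u}(\xi)}$, since the substitution $\xi\mapsto-\xi$ sends the integrand to its complex conjugate, forcing the integral to equal its own conjugate. Bilinearity, symmetry and positive definiteness are then immediate. For completeness, I would identify $H^\alpha(\RN)$, via $\scrF$, with the closed real subspace of Hermitian-symmetric elements of the weighted space $L^2(\RN,(1+4\pi^2|\xi|^2)^\alpha\,\rd\xi)$, which is itself a Hilbert space.

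Part (ii). I would simply cite Lions \cite{L-82}; the radial compactness argument there carries over verbatim once one has the continuous embedding $H^\alpha(\RN)\hookrightarrow L^p(\RN)$ for $p\in[2,2_\alpha^\ast]$, which is a standard Fourier-multiplier calculation.

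Part (iii). The preparatory observation is that (f1)--(f3) yield, for every $\e>0$, a constant $C_\e$ with
\[
|f(s)|\leq \e|s|+C_\e|s|^{2_\alpha^\ast-1},\qquad |F(s)|\leq \e s^2+C_\e|s|^{2_\alpha^\ast}\qquad \text{for all }s\in\R.
\]
Together with the Sobolev embeddings this shows $u\mapsto\int_{\RN}F(u)\,\rd x$ is well-defined on $H^\alpha(\RN)$, and a standard Nemytskii argument, applied after splitting $f=f_1+f_2$ according to the two terms in the growth bound, gives Fr\'echet differentiability with derivative $\varphi\mapsto\int_{\RN}f(u)\varphi\,\rd x$ depending continuously on $u$. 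The quadratic part contributes $\la u,\cdot\ra_\alpha$ to $I'$, producing \eqref{eq:7}. For the distributional form of the Euler--Lagrange equation, given $\varphi\in\scrS(\RN,\C)$ I would decompose $\varphi=\varphi_1+i\varphi_2$ with $\varphi_j\in\scrS(\RN,\R)\subset H^\alpha(\RN)$, apply \eqref{eq:7} to each $\varphi_j$, and recombine; by Parseval's theorem together with the Fourier-multiplier definition of $(1-\Delta)^\alpha$, the resulting left-hand side is exactly the distributional pairing $\la(1-\Delta)^\alpha u,\varphi\ra$ displayed in \eqref{eq:8}. The case of $I|_{\Har}$ requires no modification.

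The only potential obstacle is ensuring \emph{Fr\'echet} (and not merely Gateaux) differentiability and continuity of $I'$, but this is handled cleanly by the $f_1+f_2$ decomposition together with continuity of the associated Nemytskii operators between the relevant Lebesgue spaces supplied by the Sobolev embedding.
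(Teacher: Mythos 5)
Your proposal is correct, and the notable difference is in part~(i). The paper establishes realness of $\la u,v\ra_\alpha$ by introducing the Bessel kernel $G_{2\alpha}$, showing that $u=G_{2\alpha}\ast\varphi$ for $\varphi\in\scrS(\RN,\R)$ gives $\la u,v\ra_\alpha=\int\varphi v\,\rd x\in\R$, and then invoking density plus bijectivity of $\varphi\mapsto G_{2\alpha}\ast\varphi$. Your argument is considerably more direct: the Hermitian symmetry $\hat{u}(-\xi)=\ov{\hat{u}(\xi)}$ for real $u$ makes the integrand satisfy $g(-\xi)=\ov{g(\xi)}$, so the substitution $\xi\mapsto-\xi$ immediately forces $\la u,v\ra_\alpha=\ov{\la u,v\ra_\alpha}$. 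This is a one-line observation versus a paragraph-long convolution-and-density argument; the trade-off is that the paper's route also sets up $G_{2\alpha}$ and its Fourier identity \eqref{eq:9}, both of which are reused later in Propositions~\ref{definition:3.6} and~\ref{definition:A.1}, so the extra effort is amortized. Your identification of $H^\alpha$ with the Hermitian-symmetric subspace of the weighted $L^2$ space gives completeness cleanly. For part~(iii), your real/imaginary decomposition $\varphi=\varphi_1+i\varphi_2$ followed by recombination matches what the paper does in spirit: the paper obtains the identity with $\ov{\hat\psi}$ for $\psi\in\scrS(\RN,\C)$ (which already needs this linear extension) and then sets $\varphi=\ov\psi$, using $\ov{\scrF\psi}=\scrF^{-1}\ov\psi$; your recombination step produces the same identity directly. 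The $f_1+f_2$ Nemytskii decomposition for $C^1$-smoothness is standard and the paper simply asserts it, so your more explicit account fills in a gap the paper leaves implicit.
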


	\begin{proof}
(i) We only check $\la u, v \ra_\alpha \in \R$ for any $u,v \in \Ha$. 
Put  
	\[
		G_{2\alpha}(x) := \frac{1}{(4\pi)^\alpha} \frac{1}{\Gamma(\alpha)} 
		\int_0^\infty e^{-\pi |x|^2 / t} e^{-t/4\pi} t^{(2\alpha - N)/2} \frac{\rd t}{t}. 
	\]
Then it is known that (see \cite[Chapter V]{St})
	\begin{equation}\label{eq:9}
		\begin{aligned}
			&\widehat{G_{2\alpha}}(\xi) = (4\pi^2 |\xi|^2 + 1)^{-\alpha}, 
			\quad \| G_{2\alpha} \|_{L^1} = 1, \\
			& 0 \leq G_{2\alpha}(x) \leq 
			C_0 \left( |x|^{N-2\alpha} \chi_{B_1(0)}(x) + e^{-c_1 |x|} \chi_{(B_1(0))^c}(x) \right)
		\end{aligned}
	\end{equation}
for some $c_1>0$ where $B_1(0) :=\{ x\in \RN\ | \ |x| < 1 \}$ and 
$\chi_{A}$ is a characteristic function of $A$. 
Moreover, for every $\varphi \in \scrS(\RN,\R)$, the equation 
	\[
		(-\Delta + 1)^\alpha u = \varphi \quad {\rm in} \ \RN, \quad 
		u \in H^\alpha(\RN)
	\]
has a unique solution $u$ expressed as $u = G_{2\alpha} \ast \varphi \in \scrS(\RN,\R)$ 
due to \eqref{eq:9}. 
For this $u$, if $v \in \scrS(\RN,\R)$, then 
	\[
		\begin{aligned}
			\la u , v \ra_\alpha &= \int_{\RN} (1+4\pi|\xi|^2)^\alpha \hat{u}(\xi) 
			\ov{\hat{v}(\xi)} \rd \xi 
			= \int_{\RN} (1+4\pi |\xi|^2)^\alpha \widehat{G_{2\alpha} \ast \varphi} 
			\ov{\hat{v}(\xi)} \rd \xi 
			\\
			&= \int_{\RN} \hat{\varphi} \ov{\hat{v}} \rd \xi = \int_{\RN} \varphi v \rd x 
			\in \R.
		\end{aligned}
	\]
Here at the last equality, we used the Plancherel theorem. 
Therefore, if $\varphi, v \in \scrS(\RN,\R)$ and $u = G_{2\alpha} \ast \varphi$, 
then $\la u , v \ra_\alpha \in \R$. 
Since $\scrS (\RN,\R)$ is dense in $H^\alpha(\RN)$, we have 
	\[
		\la G_{2\alpha} \ast \varphi , v \ra_\alpha \in \R \quad 
		{\rm for\ all} \ \varphi \in \scrS(\RN,\R), \ v \in H^\alpha(\RN).
	\]
Finally, the map $\varphi \mapsto G_{2\alpha} \ast \varphi : 
\scrS(\RN,\R) \to \scrS(\RN,\R)$ is bijective, by the density argument, 
we obtain $\la u , v \ra_{\alpha} \in \R$ for every $u,v \in H^\alpha(\RN)$.

(ii) This is proved in \cite{L-82}.

(iii) Noting $\la u,v \ra_\alpha \in \R$ for all $u,v \in \Ha$ and 
(f1)--(f4), it is easy to check $I \in C^1(H^\alpha(\RN), \R)$ and \eqref{eq:7}. 
For \eqref{eq:8}, we see from \eqref{eq:7} that 
	\[
			\int_{\RN} (1 + 4\pi^2 |\xi|^2)^\alpha \hat{u} \ov{\hat{\psi}}  \rd \xi 
			= \int_{\RN} f(u) \ov{\psi} \rd x \quad 
			{\rm for \ all} \ \psi \in \scrS(\RN, \mathbf{C}). 
	\]
Then setting $\varphi(x) := \ov{\psi(x)}$ and noting 
$ \ov{\scrF \psi} = \scrF^{-1} \ov{\psi} = \scrF^{-1} \varphi $, 
one observes that \eqref{eq:8} holds.

The last assertion follows from the principle of symmetric criticality. 
See \cite{W}. 
\end{proof}

Hereafter, we shall look for critical points of $I|_{\Har}$. 
Following the arguments in \cite{HIT-10}, 
we first introduce a comparison functional $\bar{I}(u)$, 
which plays a role to show that the minimax values $c_n$ defined in \eqref{eq:18} 
diverge as $n \to \infty$. 
To this end, we modify the nonlinearity $f(s)$. 
By (f2), choose $\delta_0>0$ and $s_1>0$ such that 
	\begin{equation}\label{eq:10}
		s f(s) \leq (1-2 \delta_0) s^2 \quad {\rm for\ all} \ |s| \leq s_1.
	\end{equation}
Fixing a $p_0 \in (1,2_\alpha^\ast - 1)$, set
	\[
		h(s) := \left\{ \begin{aligned}
			& \left(f(s) - (1-\delta_0) s\right)_+ , & &{\rm if} \ s \geq 0,\\
			& - h(-s) & &{\rm if} \ s < 0, 
		\end{aligned} \right. 
		\qquad 
		\bar{h}(s) := \left\{ \begin{aligned}
			& s^{p_0} \sup_{0 < t < s} \frac{h(t)}{t^{p_0}} 
			& &{\rm if} \ s > 0, 
			\\
			& 0 & &{\rm if} \ s = 0,\\
			& -\bar{h}(-s) & &{\rm if} \ s<0
		\end{aligned} \right.
	\]
where $a_+ := \max \{0,a\}$.
Finally, put $\bar{H}(s) := \int_0^s \bar{h}(t) \rd t$. Then 
	\begin{lemma}\label{definition:2.2}
		\emph{(i)} 
			$\bar{h} \in C(\R)$ is odd, $\bar{h}(s) \geq 0$ for $s \geq 0$, 
			$\bar{h} \not \equiv 0$ and $\bar{h}$ satisfies \emph{(f3)}. 
			
		\emph{(ii)} 
			There exists an $s_2>0$ such that 
			$\bar{h}(s) = 0 = \bar{H}(s)$ for all $|s| \leq s_2$. 
			In particular, there is a $C_0>0$ such that 
			$|\bar{h}(s)s| + |\bar{H}(s)| \leq C_0 |s|^{2_\alpha^\ast}$ 
			for each $s \in \R$. 
		
		\emph{(iii)}
			$ 0 \leq (p_0+1) \bar{H}(s) \leq s \bar{h}(s)$ for any $s \in \R$. 
		
		\emph{(iv)} 
			$F(s) - (1-\delta_0) s^2/ 2 \leq \bar{H}(s)$ for every $s \in \R$. 
		
		\emph{(v)} 
			Let $(u_n) \subset \Har$ satisfy $u_n \rightharpoonup u_0$ 
			weakly in $\Har$ and $u_n(x) \to u_0(x)$ for a.e. $x \in \RN$. Then 
				\[
					\bar{H}(u_n)  \to \bar{H}(u_0)  \quad 
					{\rm strongly \ in}\ L^1(\RN), \quad 
					\bar{h}(u_n) \to \bar{h}(u_0) 
					\quad {\rm strongly\ in} \ L^{2N/(N+2\alpha)} (\RN). 
				\]
	\end{lemma}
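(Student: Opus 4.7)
The plan is to first derive a few working properties of the auxiliary function $h$ from (f1)--(f4), and then transfer them to $\bar{h}$ by exploiting the monotonicity built into its definition: for $s>0$ the quotient $s \mapsto \bar{h}(s)/s^{p_0}$ equals $\sup_{0<t<s} h(t)/t^{p_0}$ and is therefore nondecreasing on $(0,\infty)$. Concretely, from \eqref{eq:10} one has $f(s) \leq (1-2\delta_0)s < (1-\delta_0)s$ on $(0,s_1]$, so $h$ vanishes on $[-s_1,s_1]$; $h$ is continuous and odd by construction; and $|h(s)| \leq |f(s)| + (1-\delta_0)|s|$ together with (f3) for $f$ transfers (f3) to $h$. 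Moreover $h \not\equiv 0$: otherwise $f(s) \leq (1-\delta_0)s$ for $s \geq 0$ would give $F(s_0) \leq (1-\delta_0)s_0^2/2 < s_0^2/2$, contradicting (f4).

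For (i) and (ii), the oddness, nonnegativity on $[0,\infty)$, and nontriviality of $\bar{h}$ are immediate from the definition. Continuity on $(0,\infty)$ follows from continuity of $h$ together with the monotone character of the sup, and at $s=0$ from the vanishing of $\bar{h}$ on $[-s_1,s_1]$, which supplies the first half of (ii) with $s_2 := s_1$. To verify (f3) for $\bar{h}$, fix $\e>0$ and choose $R$ so that $h(t) \leq \e t^{2_\alpha^\ast - 1}$ for $t \geq R$; since $h(t)/t^{p_0}$ is bounded by some $M_R$ on $[s_1, R]$, one gets $\bar{h}(s)/s^{2_\alpha^\ast - 1} \leq M_R s^{p_0+1-2_\alpha^\ast} + \e$ for $s \geq R$, which is eventually $\leq 2\e$. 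Combining (f3) for $\bar{h}$ with $\bar{h}(s_1)=0$ and the continuity of $\bar{h}(s)/|s|^{2_\alpha^\ast - 1}$ on $[s_1,\infty)$ (with limit $0$ at $\infty$) yields $|\bar{h}(s)| \leq C_0 |s|^{2_\alpha^\ast - 1}$ for all $s$, and integration completes the bound on $\bar{H}$. For (iii), the monotonicity of $\bar{h}(s)/s^{p_0}$ gives $\bar{h}(t) \leq (t/s)^{p_0} \bar{h}(s)$ for $0<t\leq s$; integrating on $(0,s)$ yields $(p_0+1)\bar{H}(s) \leq s\bar{h}(s)$, which then extends to all $s \in \R$ using the odd/even symmetries.

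For (iv), both sides are even in $s$ (since $f$ is odd), so it suffices to work with $s \geq 0$; both vanish at $0$ and the derivative comparison $f(s) - (1-\delta_0)s \leq h(s) \leq \bar{h}(s)$ closes the argument, where $\bar{h}(s) \geq h(s)$ is obtained by taking $t \to s^-$ in the sup. For (v), Lemma \ref{definition:2.1}(ii) gives $u_n \to u_0$ strongly in $L^p(\RN)$ for every $p \in (2, 2_\alpha^\ast)$, together with a.e.\ convergence after passing to a subsequence. Since $\bar{h}$ vanishes on $[-s_2, s_2]$ and satisfies (f3), for each $\e > 0$ there exist $p_\ast \in (2, 2_\alpha^\ast)$ and $C_\e > 0$ with
\[
|\bar{h}(s)| \leq \e |s|^{2_\alpha^\ast - 1} + C_\e |s|^{p_\ast - 1}, \qquad |\bar{H}(s)| \leq \e |s|^{2_\alpha^\ast} + C_\e |s|^{p_\ast}
\]
for all $s \in \R$. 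Using the uniform $L^{2_\alpha^\ast}$ bound on $(u_n)$ coming from the $\Har$ bound together with strong $L^{p_\ast}$-convergence, a standard Vitali-type splitting yields the two asserted convergences.

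The principal obstacle is part (v): $\bar{h}$ has critical growth, so compactness of the embedding is not directly available. The two structural facts that make the argument work are that $\bar{h}$ vanishes on a neighborhood of the origin, removing any issue at small $|u_n|$, and that the critical tail $\e |s|^{2_\alpha^\ast - 1}$ can be absorbed against a uniform $L^{2_\alpha^\ast}$-bound with an arbitrarily small coefficient, leaving the actual convergence to be delivered by the subcritical piece via the compact embedding $\Har \hookrightarrow L^{p_\ast}$.
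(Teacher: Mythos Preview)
Your proof is correct and, for parts (i)--(iv), supplies the details that the paper omits (it simply refers to \cite[Lemma 2.1 and Corollary 2.2]{HIT-10}); your arguments there are the natural ones.

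For part (v), your approach and the paper's share the same two structural ingredients --- the vanishing of $\bar h$ near $0$ and the (f3) bound at infinity --- but implement them differently. The paper decomposes the \emph{functions} via the level sets $[|u_n| < s_\e]$ and $[|u_n| \geq s_\e]$: on the large set the bound $|\bar h(s)|^{2N/(N+2\alpha)} \leq \e |s|^{2_\alpha^\ast}$ gives a uniformly $O(\e)$ contribution, while on the bounded set $|u_n| \leq s_\e$ allows one to upgrade $L^p$-convergence (from the compact embedding) to $L^q$-convergence for all $q<\infty$ and conclude directly. You instead decompose the \emph{growth} of $\bar h$ as $|\bar h(s)| \leq \e|s|^{2_\alpha^\ast-1} + C_\e|s|^{p_\ast-1}$ and invoke a Vitali-type argument. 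Both routes work; the paper's level-set cutoff avoids the Vitali machinery at the cost of a slightly more delicate splitting (three terms in \eqref{eq:12}, including the borderline set $[|u_0|=s_\e]$), while yours is more streamlined but requires choosing $p_\ast$ so that $(p_\ast-1)\cdot 2N/(N+2\alpha)\in(2,2_\alpha^\ast)$ --- a point worth making explicit. One minor remark: the a.e.\ convergence $u_n(x)\to u_0(x)$ is already part of the hypothesis, so passing to a subsequence is unnecessary.
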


	\begin{proof}
Since one can prove (i)--(iv) in a similar way to 
\cite[Lemma 2.1 and Corollary 2.2]{HIT-10}, 
we omit the details. Now we shall prove (v). 
Since both of the assertions can be proved in a similar way, 
we only treat $\bar{h}(u_n) \to \bar{h}(u_0)$ strongly in $L^{2N/(N+2\alpha)}(\RN)$. 
Noting that $\bar{h}$ satisfies (f3) thanks to the assertion (i), for each $\e>0$ 
there exists an $s_\e>0$ such that 
	\begin{equation}\label{eq:11}
		|\bar{h}(s)|^{2N/(N+2\alpha)} \leq \e |s|^{2_\alpha^\ast} \quad 
		{\rm if} \ |s| \geq s_\e. 
	\end{equation}

		Set $[|u_n| < a] := \{ x \in \RN \ |\ |u_n(x)| < a \}$, 
$\chi_{n,\e}(x) := \chi_{ [|u_n| < s_\e] }(x)$ 
and $\chi_{0,\e}(x) := \chi_{ [|u_0| < s_\e] } (x) $. 
Using $\bar{h}(u_n(x)) = \bar{h} ( \chi_{n,\e}(x) u_n(x) ) + \bar{h} ( (1-\chi_{n,\e}(x) ) u_n(x) )$ 
and writing $v_n(x) := \chi_{n,\e}(x) u_n(x) $, 
$v_0(x) := \chi_{0,\e}(x) u_0(x)$,  $w_n(x) := (1 - \chi_{n,\e}(x) ) u_n(x)$ 
and $w_0(x) := (1 - \chi_{0,\e}(x) ) u_0(x)$, 
we have  
	\begin{equation}\label{eq:12}
		| \bar{h}(u_n) - \bar{h}(u_0)| 
		\leq | \bar{h}( \chi_{0,\e} v_n) - \bar{h}(v_0)| 
		+ |\bar{h} ((1-\chi_{0,\e}) v_n ) | 
		+ |\bar{h}(w_n) - \bar{h}(w_0)|.
	\end{equation}
Since $w_n(x) \neq 0$ implies $|w_n(x)| \geq s_\e$, 
it follows from \eqref{eq:11} that 
	\begin{equation}\label{eq:13}
		\begin{aligned}
		\sup_{n \geq 1} 
		\int_{\RN} |\bar{h}(w_n) - \bar{h}(w_0)|^{2N/(N+2\alpha)} \rd x 
		& \leq 
		C_0 \sup_{n \geq 1} \int_{\RN} \left( |\bar{h}(w_n)|^{2N/(N+2\alpha)} 
		+ |\bar{h}(w_0)|^{2N/(N+2\alpha)} \right) \rd x 
		\\
		&\leq C \e \sup_{n \geq 1} 
		\left( \| w_n \|_{L^{2_\alpha^\ast}}^{2^\ast_\alpha}  
		+ \| w_0 \|_{L^{2_\alpha^\ast}}^{2^\ast_\alpha} \right) 
		\leq C \e. 
		\end{aligned}
	\end{equation}

	Recalling $u_n(x) \to u_0(x)$ and the definition of 
$\chi_{0,\e}(x)$, we observe that 
	\[
		\limsup_{n \to \infty } |(1-\chi_{0,\e})(x) v_n(x)| 
		\leq \chi_{[ |u_0| = s_\e ]}(x) |s_\e| 
		\quad \text{for a.e. $x \in \RN$}.  
	\]
Hence, using \eqref{eq:11} and 
$\chi_{[ |u_0| = s_\e ]}(x) \leq (1-\chi_{0,\e}(x))$, we obtain 
	\begin{equation}\label{eq:14}
		\begin{aligned}
			\limsup_{n\to \infty} \int_{\RN} 
			| \bar{h} ( (1-\chi_{0,\e})(x) v_n(x)  ) |^{2N/(N+2\alpha)} \rd x 
			&\leq \int_{\RN} |\bar{h}(\chi_{[ |u_0| = s_\e ]}(x) |s_\e|)|^{2N/(N+2\alpha)} \rd x 
			\\
			& \leq \int_{\RN} |\bar{h} (w_0) |^{2N/(N+2\alpha)} \rd x 
			\leq C \e. 
		\end{aligned}
	\end{equation}

		On the other hand, since $\chi_{0,\e}(x) v_n(x) \to v_0(x)$ for a.e. $x \in \RN$, 
noting that  $u_n \to u_0$ strongly in $L^p(\RN)$ for $2<p< 2_\alpha^\ast$ 
due to Lemma \ref{definition:2.1} (ii) and that $|v_n(x)|, |v_0(x)| \leq s_\e$, 
we have $\chi_{0,\e} v_n \to v_0$ strongly in $L^p(\RN)$ for $2 < p < \infty$. 
Thus, by the assertion (ii), it is easily seen that 
	\begin{equation}\label{eq:15}
		\lim_{n\to \infty} 
		\int_{\RN} | \bar{h}(\chi_{0,\e} v_n) - \bar{h}(v_0)|^{2N/(N+2\alpha)} \rd x 
		= 0.
	\end{equation}
Collecting \eqref{eq:12}--\eqref{eq:15}, one sees
	\[
		\limsup_{n\to \infty} \| \bar{h}(u_n) - \bar{h}(u_0) 
		\|_{L^{2N/(N+2\alpha)}}^{2N/(N+2\alpha)}
		\leq C \e. 
	\]
Since $\e>0$ is arbitrary, $\bar{h}(u_n) \to \bar{h}(u_0)$ strongly 
in $L^{2N/(N+2\alpha)}(\RN)$. 
	\end{proof}

	Next, from
	\[
		\begin{aligned}
			I(u) &= \frac{1}{2} \| u \|_\alpha^2 
				- \frac{1-\delta_0}{2} \int_{\RN} u^2 \rd x 
				- \int_{\RN} F(u) - \frac{1-\delta_0}{2} u^2 \rd x
				\\
				&\geq \frac{\delta_0}{2} \| u \|_\alpha^2 
				- \int_{\RN} F(u) - \frac{1-\delta_0}{2} u^2 \rd x,
		\end{aligned}
	\]
we define a comparison functional $\bar{I}(u)$ by 
	\[
		\bar{I}(u) := \frac{\delta_0}{2} \| u \|_\alpha^2 
		- \int_{\RN} \bar{H}(u) \rd x. 
	\]

	\begin{lemma}\label{definition:2.3}
		\emph{(i)} The inequality $\bar{I}(u) \leq I(u)$ holds for any $u \in \Ha$.  
		Moreover, there exists a $\rho_0>0$ such that 
			\[
				0 < \inf_{\|u \|_{\alpha} = \rho_0 } \bar{I}(u), \quad 
				\bar{I}(u) \geq 0 \quad {\rm if}\ \| u \|_\alpha \leq \rho_0.
			\]
		
		\emph{(ii)}
		The functional $\bar{I}$ satisfies the Palais--Smale condition. 
		
		\emph{(iii)} 
		For each $n \geq 1$, 
		there exists a $\gamma_n \in C(\partial D_n, \Har)$ such that 
			\[
				\gamma_n(-\sigma) = - \gamma_n (\sigma), \quad 
				I(\gamma_n(\sigma)) < 0 \quad {\rm for\ each} \ \sigma \in \partial D_n
			\]
		where $D_n := \{ \sigma = (\sigma_1,\ldots, \sigma_n) \in \R^n \ |\ 
		|\sigma | \leq 1 \}$. 
	\end{lemma}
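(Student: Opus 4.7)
For (i), Lemma \ref{definition:2.2}(iv) provides the pointwise bound $F(s) - \frac{1-\delta_0}{2}s^2 \le \bar H(s)$, which combined with $\|u\|_{L^2}^2 \le \|u\|_\alpha^2$ immediately gives
\[
I(u) = \frac{1}{2}\|u\|_\alpha^2 - \frac{1-\delta_0}{2}\|u\|_{L^2}^2 - \int_{\RN}\!\Bigl(F(u) - \tfrac{1-\delta_0}{2}u^2\Bigr)\rd x \ge \frac{\delta_0}{2}\|u\|_\alpha^2 - \int_{\RN}\!\bar H(u)\,\rd x = \bar I(u).
\]
For the local geometry near $0$, the estimate $|\bar H(s)| \le C_0 |s|^{2_\alpha^\ast}$ from Lemma \ref{definition:2.2}(ii) together with the Sobolev embedding $\Ha \hookrightarrow L^{2_\alpha^\ast}(\RN)$ yields $\int \bar H(u)\,\rd x \le C \|u\|_\alpha^{2_\alpha^\ast}$, whence $\bar I(u) \ge \tfrac{\delta_0}{2}\|u\|_\alpha^2 - C\|u\|_\alpha^{2_\alpha^\ast}$; since $2_\alpha^\ast > 2$, any sufficiently small $\rho_0$ suffices.

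For (ii), the super-quadratic control $(p_0+1)\bar H(s) \le s\bar h(s)$ of Lemma \ref{definition:2.2}(iii) handles boundedness: for a Palais--Smale sequence $(u_n) \subset \Har$ at level $c$,
\[
c + o(1) + o(\|u_n\|_\alpha) = \bar I(u_n) - \frac{1}{p_0+1}\bar I'(u_n)u_n \ge \frac{\delta_0(p_0-1)}{2(p_0+1)}\|u_n\|_\alpha^2,
\]
so $(u_n)$ is bounded. Passing to a subsequence, $u_n \rightharpoonup u_0$ in $\Har$, $u_n \to u_0$ in $L^p$ for $2<p<2_\alpha^\ast$ by Lemma \ref{definition:2.1}(ii), and a.e. Lemma \ref{definition:2.2}(v) then gives $\bar h(u_n) \to \bar h(u_0)$ in $L^{2N/(N+2\alpha)}$. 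From
\[
\bar I'(u_n)(u_n - u_0) = \delta_0 \la u_n, u_n - u_0 \ra_\alpha - \int_{\RN}\!\bar h(u_n)(u_n - u_0)\,\rd x,
\]
the left-hand side is $o(1)$; splitting $\bar h(u_n) = (\bar h(u_n) - \bar h(u_0)) + \bar h(u_0)$ and using the weak convergence $u_n - u_0 \rightharpoonup 0$ in $L^{2_\alpha^\ast}$ kills the integral. Hence $\|u_n\|_\alpha^2 = \la u_n, u_n - u_0\ra_\alpha + \la u_n, u_0\ra_\alpha \to \|u_0\|_\alpha^2$, and norm convergence plus weak convergence in the Hilbert space $\Har$ forces $u_n \to u_0$ strongly.

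For (iii), the plan is to exploit (f4) through the spatial dilation $u \mapsto u(\cdot/L)$. A direct Fourier-side calculation gives
\[
\frac{I(u(\cdot/L))}{L^N} = \frac{1}{2}\int_{\RN}\!\bigl(1 + 4\pi^2|\eta|^2/L^2\bigr)^{\alpha} |\hu(\eta)|^2\,\rd\eta - \int_{\RN}\! F(u)\,\rd x,
\]
and by dominated convergence (the integrand is bounded by $(1+4\pi^2|\eta|^2)^\alpha|\hu|^2$ for $L\ge 1$) this tends to $\tfrac{1}{2}\|u\|_{L^2}^2 - \int F(u)$ as $L\to\infty$. By (f4), one can pick a radial $\omega \in C_c^\infty(\RN)$ approximating $s_0\chi_{B_R}$ with $R$ large so that this limit is strictly negative, whence $I(\omega(\cdot/L)) \to -\infty$. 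To obtain an $n$-dimensional odd continuous map, I would fix $n$ pairwise disjoint concentric annuli carrying radial profiles $\omega_1,\ldots,\omega_n$, each satisfying $\int F(\omega_i) > \tfrac{1}{2}\|\omega_i\|_{L^2}^2$, and set $\gamma_n(\sigma)(x) = \sum_i \sigma_i \omega_i(x/L_n)$ with $L_n$ large; oddness in $\sigma$ is automatic. The main obstacle is that this naive combination may fail to keep $I<0$ when $\sigma$ approaches a coordinate axis of $\partial D_n$: by (f2), $F(s) - \tfrac12 s^2 < 0$ for $|s|$ small, so a coordinate with very small $|\sigma_i|$ injects a positive contribution into the limit bracket $\sum_i [\tfrac{\sigma_i^2}{2}\|\omega_i\|_{L^2}^2 - \int F(\sigma_i \omega_i)]$. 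Following \cite{HIT-10,BL-83-2}, I would resolve this by replacing the factor $\sigma_i$ with an odd continuous modulation $\Phi_i(\sigma)$ that stays at its ``saturated'' value $\pm s_0/\|\omega_i\|_{L^\infty}$ once $|\sigma_i|$ exceeds a fixed threshold (depending only on $\omega_i$ and $f$), and by tuning the dilation $L_n$ together with the annular volumes so that for every $\sigma \in \partial D_n$ the index with largest $|\sigma_i|$ contributes a dominant negative term that absorbs the bounded positive contributions of the remaining coordinates.
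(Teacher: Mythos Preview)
Parts (i) and (ii) are correct and match the paper's approach; the paper omits the details of (ii), and what you wrote is precisely the standard argument it alludes to.

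For (iii), the paper organizes the work differently. It decouples the construction into two independent steps: first it invokes \cite[Theorem~10]{BL-83-2} applied to the nonlinearity $s\mapsto f(s)-s$ (which satisfies the Berestycki--Lions hypotheses) to obtain an odd map $\pi_n \in C(\partial D_n, H^1_{\rm r}(\RN))$ with $\int_{\RN}\bigl(F(\pi_n(\sigma)) - \tfrac12 \pi_n(\sigma)^2\bigr)\,\rd x \ge 1$ \emph{uniformly} on $\partial D_n$; only then does it dilate, using the elementary bound $(1+s)^\alpha \le 1 + s^\alpha$ to get the clean estimate
\[
I\bigl(\pi_n(\sigma)(\cdot/t)\bigr) \le \frac{t^{N-2\alpha}}{2}\int_{\RN} (4\pi^2|\xi|^2)^\alpha|\widehat{\pi_n(\sigma)}|^2\,\rd\xi - t^N,
\]
which is negative for large $t$ uniformly in $\sigma$. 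Your route attempts to build the odd map and the negativity simultaneously via $\sum_i \Phi_i(\sigma)\,\omega_i(\cdot/L_n)$; this is essentially reproducing the construction of \cite{BL-83-2} by hand, and you correctly identify both the obstacle (small coordinates contribute positively because $F(s)-\tfrac12 s^2<0$ near zero) and the remedy (saturating odd modulations plus growing annular volumes so the dominant coordinate wins). What is missing from your sketch is the precise choice of annular radii and thresholds that makes the absorption \emph{uniform} over $\partial D_n$; this is exactly the technical content of \cite[Theorem~10]{BL-83-2}, and without spelling it out your argument remains a plan rather than a proof. The paper's decoupling is cleaner here because the delicate combinatorics live entirely in the cited reference and are independent of the fractional operator, so the only new computation is the one-line scaling estimate above.
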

	\begin{proof}
(i) The inequality $\bar{I}(u) \leq I(u)$ is clear from the definition and 
Lemma \ref{definition:2.2}. 
Next by Lemma \ref{definition:2.2} (ii), we have 
$|\bar{H}(s)| \leq C|s|^{2_\alpha^\ast}$ for all $s \in \R$. 
Thus it follows from Sobolev's inequality that 
	\[
		\begin{aligned}
			\bar{I}(u) & \geq \frac{\delta_0}{2} \| u \|_{\alpha}^2 
			- C \int_{\RN} |u|^{2_\alpha^\ast} \rd x 
			\geq \frac{\delta_0}{2} \| u \|_{\alpha}^2 
			- C \| u \|_{\alpha}^{2^\ast_\alpha}. 
		\end{aligned}
	\]
Noting $2<2_\alpha^\ast$ and choosing $\rho_0>0$ sufficiently small, we get 
	\[
		\inf_{\| u \|_\alpha = \rho_0} I(u) > 0, \quad 
		I(u) \geq 0 \quad {\rm if}\ \| u \|_\alpha \leq \rho_0.
	\]

(ii) 
Since the nonlinearity $\bar{h}$ satisfies the global Ambrosetti--Rabinowitz 
condition (Lemma \ref{definition:2.2} (iii)), 
following the argument in \cite{R} (cf. proof of Theorem \ref{definition:1.3} below) 
and using Lemma \ref{definition:2.2} (v), 
we can prove that $\bar{I}$ satisfies the Palais--Smale condition and 
we omit the details. 
%In fact, let $(u_n) \subset \Har$ be a Palais--Smale sequence, namely, 
%$\bar{I}(u_n) \to c \in \R$ and $\bar{I}'(u_n) \to 0$ strongly in $(\Har)^\ast$. 
%Since $\bar{h}$ satisfies the global Ambrosetti--Rabinowitz condition, we have 
%	\[
%		c+o(1) \| u_n \|_\alpha = \bar{I}(u_n) - \frac{1}{p_0+1} \bar{I}'(u_n) u_n 
%		\geq \delta_0 \left( \frac{1}{2} - \frac{1}{p_0+1}  \right) \| u_n \|_\alpha^2.
%	\]
%Thus $(u_n)$ is bounded in $\Har$ and we may assume 
%$u_n \rightharpoonup u_0$ weakly in $\Har$, 
%$u_n \to u_0$ strongly in $L^p(\RN)$ for $2 < p < 2_\alpha^\ast$ 
%by Lemma \ref{definition:2.1} (ii) and 
%$u_n(x) \to u_0(x)$ for a.e. $x \in \RN$. 
%Noting $L^{2N/(N+2\alpha)}(\RN) \subset (\Har)^\ast$ and 
%$\bar{I}'(u_n) \to 0$, $\bar{h}(u_n) \to \bar{h}(u_0)$ 
%strongly in $L^{2N/(N+2\alpha)}(\RN)$ due to Lemma \ref{definition:2.2} (v), 
%it is easily checked that $u_n \to u_0$ strongly in $\Har$. 
%Hence, $\bar{I}$ satisfies the Palais--Smale condition. 

(iii) Since $f(s) - s$ satisfies the Berestycki--Lions type conditions 
(see \cite{BL-83-1,BL-83-2}), 
as in \cite[Theorem 10]{BL-83-2}, we may find a  map 
$\pi_n \in C( \partial D_n , H^1_{\rm r}(\RN))$ with the properties 
	\[
		0 \not \in \pi_n(\partial D_n), \quad \pi_n(-\sigma) = - \pi_n(\sigma),
		\quad 
		\int_{\RN} F(\pi_n(\sigma)) - \frac{1}{2} (\pi_n(\sigma))^2 \rd x 
		\geq 1 \quad {\rm for\ all} \ \sigma \in \partial D_n. 
	\]
Set $\gamma_n(\sigma)(x) := \pi_n(\sigma) (x/t)$ for $t>0$. 
Then for sufficiently large $t>0$, 
it follows from $\wh{\gamma_n(\sigma)} (\xi) = t^N \wh{\pi_n(\sigma)}(t \xi)$ and 
the inequality $(1+s)^\alpha \leq 1 + s^\alpha$ for $s \geq 0$ that 
	\[
		\begin{aligned}
			I(\gamma_n(\sigma)) &= 
			\frac{t^N}{2} 
			\int_{\RN} \left( 1 + \frac{4\pi^2 |\xi|^2}{t^2} \right)^\alpha 
			| \wh{\pi_n(\sigma)} |^2 \rd \xi 
			- t^N \int_{\RN} F( \pi_n(\sigma) ) \rd x
			\\
			& \leq \frac{t^{N-2\alpha}}{2} \int_{\RN} (4\pi^2|\xi|^2)^\alpha 
			| \widehat{\pi_n(\sigma)} |^2 \rd \xi 
			- t^N \int_{\RN} F(\pi_n(\sigma)) 
			- \frac{1}{2} \left( \pi_n(\sigma) \right)^2 \rd x 
			\\
			& \leq \frac{t^{N-2\alpha}}{2} \int_{\RN} (4\pi^2|\xi|^2)^\alpha 
						| \widehat{\pi_n(\sigma)} |^2 \rd \xi - t^N < 0 
						\quad {\rm for\ all} \ \sigma \in \partial D_n.
		\end{aligned}
	\]
Since $H^1_{\rm r}(\RN) \subset \Har$, we have 
$\gamma_n \in C(\partial D_n, \Har)$ and 
complete the proof. 
	\end{proof}

	\begin{remark}\label{definition:2.4} 
When $n = 1$, we can assume that 
$\gamma_1(1)(x) \geq 0$ for each $x \in \RN$, 
$\gamma_1(1)(|x|) = \gamma_1(1)(x)$ and
$r \mapsto \gamma_1(1)(r)$ is piecewise linear and nonincreasing. 
See \cite{BL-83-1,BL-83-2} (cf. the proof of Proposition \ref{definition:4.1} below). 
	\end{remark}

		Now we introduce an auxiliary functional 
based on the scaling property as in \cite{HIT-10,J-97}. 
For this purpose, we set 
	\[
		u_\theta (x) := u( e^{-\theta} x ). 
	\]
Then we have 
	\begin{equation}\label{eq:16}
		\widehat{u_\theta}(\xi) = e^{N \theta} \hat{u} (e^{\theta} \xi), 
		\quad 
		I(u_\theta) = \frac{e^{N\theta}}{2} \int_{\RN} \left( 1 + 4\pi^2 \frac{|\xi|^2}{e^{2\theta}} 
				\right)^\alpha |\hat{u}|^2 \rd \xi - e^{N\theta} \int_{\RN} F(u) \rd x. 
	\end{equation}
From this, we define $\tilde{I}(\theta, u)$ by 
	\[
		\tilde{I} (\theta, u) := 
		\frac{e^{N\theta}}{2} \int_{\RN} \left( 1 + 4\pi^2 \frac{|\xi|^2}{e^{2\theta}} 
		\right)^\alpha |\hat{u}|^2 \rd \xi - e^{N\theta} \int_{\RN} F(u) \rd x. 
	\]
It is easily seen that 
$\tilde{I} \in C^1(\R \times \Har , \R)$ and 
$u$ is a critical point of $I$ if $(0,u)$ is a critical point of $\tilde{I}$. 
Furthermore, we see the following relation between $\tilde{I}$ and 
$P$ (see \eqref{eq:6} for the definition of $P(u)$): 
for all $u \in \Ha$ and $\theta \in \R$, 
	\begin{equation}\label{eq:17}
		\begin{aligned}
			D_\theta \tilde{I} (\theta, u) 
			&= \frac{N}{2} e^{N\theta} \int_{\RN} 
			(1 + 4 \pi^2 e^{-2\theta} |\xi|^2 )^\alpha |\hat{u}(\xi)|^2 \rd \xi 
			\\
			& \qquad 
			-  \alpha e^{N\theta} \int_{\RN}
			(1 + 4 \pi^2 e^{-2\theta}  |\xi|^2)^{\alpha-1} e^{-2\theta} 
			4\pi^2 |\xi|^2 |\hat{u}(\xi)|^2 \rd \xi 
			- N e^{N\theta} \int_{\RN} F(u) \rd x
			\\
			&= \frac{N}{2} \int_{\RN} 
						(1 + 4 \pi^2 |\xi|^2 )^\alpha |\widehat{u_\theta}(\xi)|^2 \rd \xi 
			\\
			& \qquad 
				-  \alpha \int_{\RN} (1 + 4 \pi^2 |\xi|^2)^{\alpha-1} 4\pi^2 |\xi|^2 
				 |\widehat{u_\theta}(\xi)|^2 \rd \xi - N \int_{\RN} F(u) \rd x
			\\
			&= D_\theta \tilde{I} (0, u_\theta) 
			= \frac{N-2\alpha}{2} \| u_\theta \|_\alpha^2 
			+ \alpha \int_{\RN} (1+ 4\pi^2 |\xi|^2)^{\alpha -1} |\wh{u_\theta}|^2 \rd \xi 
			- N \int_{\RN} F(u_\theta ) \rd x 
			\\
			&= 
			 P(u_\theta).
		\end{aligned}
	\end{equation}
This functional is useful to generate a bounded Palais--Smale sequence $(u_k)$ 
with $P(u_k) \to 0$.

Recalling Lemma \ref{definition:2.3}, for every $n \geq 1$, we define 
	\begin{equation}\label{eq:18}
		\begin{aligned}
		c_n &:= \inf_{\gamma \in \Gamma_n} \max_{\sigma \in D_n} 
		I (\gamma(\sigma)), 
		\quad 
		\tilde{c}_n := 
		 \inf_{\tilde{\gamma} \in \tilde{\Gamma}_n} 
		 \max_{\sigma \in D_n} 
		\tilde{I} (\tilde{\gamma}(\sigma)), 
		\quad 
		d_n  := \inf_{\gamma \in \Gamma_n} \max_{\sigma \in D_n} 
				\bar{I} (\gamma(\sigma)), 
				\\
		\Gamma_n &:= \left\{ \gamma \in C(D_n,\Har) \ |\ 
				\gamma(-\sigma) = -\gamma(\sigma), \ 
				\gamma = \gamma_n \ {\rm on}\ \partial D_n \right\}, 
				\\
		\tilde{\Gamma}_n &:= 
		\{ \tilde{\gamma}(\sigma) 
		=(\theta(\sigma), \gamma(\sigma) ) \in C(D_n, \R  \times \Har ) \ |\ 
		\\
		& \hspace{3cm}
		\theta(-\sigma) = \theta(\sigma) \ {\rm for\ all}\ \sigma \in D_n,\ 
		\theta(\sigma) = 0 \ {\rm on}\ \partial D_n, \ 
		\gamma \in \Gamma_n
		 \}
		\end{aligned}
	\end{equation}
Remark that $\Gamma_n \neq \emptyset$ since $\gamma_{n,0} \in \Gamma_n$ 
where $\gamma_{n,0}(\sigma) := |\sigma| \gamma_n(\sigma/|\sigma|) $ 
when $\sigma \in D_n \setminus \{0\}$ and 
$\gamma_{n,0}(0) := 0$. Furthermore, we have 
	\begin{lemma}\label{definition:2.5}
		For all $n \in \N$, $d_n \leq c_n = \tilde{c}_n $ hold and 
		$ d_n \to \infty$ as $n \to \infty$.  
	\end{lemma}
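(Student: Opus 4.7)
The plan is as follows. The inequality $d_n \leq c_n$ is immediate from Lemma \ref{definition:2.3}~(i): since $\bar I(u) \leq I(u)$ pointwise on $\Ha$ and both infima run over the same class $\Gamma_n$, we have $d_n = \inf_{\gamma \in \Gamma_n} \max_\sigma \bar I(\gamma(\sigma)) \leq \inf_{\gamma \in \Gamma_n} \max_\sigma I(\gamma(\sigma)) = c_n$. Likewise, $\tilde c_n \leq c_n$ follows by embedding $\Gamma_n \hookrightarrow \tilde\Gamma_n$ via $\gamma \mapsto (0,\gamma)$, since $\tilde I(0,u) = I(u)$ by definition.

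For the reverse inequality $c_n \leq \tilde c_n$, given $\tilde\gamma(\sigma) = (\theta(\sigma),\gamma(\sigma)) \in \tilde\Gamma_n$ I would form $\gamma'(\sigma)(x) := \gamma(\sigma)(e^{-\theta(\sigma)} x) = (\gamma(\sigma))_{\theta(\sigma)}(x)$. Evenness of $\theta$ combined with oddness of $\gamma$ in $\sigma$ gives $\gamma'(-\sigma) = -\gamma'(\sigma)$; the condition $\theta \equiv 0$ on $\partial D_n$ yields $\gamma'|_{\partial D_n} = \gamma_n$; and joint continuity of the scaling map $(\theta,u) \mapsto u_\theta$ on $\R \times \Har$—verified from the representation $\|u_\theta\|_\alpha^2 = \int_{\RN}(1 + 4\pi^2 e^{-2\theta}|\eta|^2)^\alpha |\hat u(\eta)|^2 \rd\eta$ and dominated convergence—yields continuity of $\gamma'$. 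Hence $\gamma' \in \Gamma_n$, and \eqref{eq:16} gives $I(\gamma'(\sigma)) = I((\gamma(\sigma))_{\theta(\sigma)}) = \tilde I(\theta(\sigma),\gamma(\sigma))$, from which taking $\max$ and $\inf$ delivers $c_n \leq \tilde c_n$.

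The real work is the divergence $d_n \to \infty$. I would apply a symmetric mountain pass argument to $\bar I$ restricted to $\Har$. All the necessary ingredients are in hand: $\bar I \in C^1(\Har,\R)$ is even (since $\bar h$ is odd, Lemma \ref{definition:2.2}~(i)), satisfies the Palais--Smale condition (Lemma \ref{definition:2.3}~(ii)), stays nonnegative on the closed ball $\|u\|_\alpha \leq \rho_0$ with a positive lower bound on the sphere $\|u\|_\alpha = \rho_0$ (Lemma \ref{definition:2.3}~(i)), and the boundary map $\gamma_n|_{\partial D_n}$ is a continuous odd map from $\partial D_n$ into $\{\bar I < 0\}$ (since $\bar I \leq I$ and $I(\gamma_n(\sigma)) < 0$). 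In this setting $d_n$ is a genus-$n$-type minimax value and the standard Rabinowitz argument (see \cite{R} and its adaptation in \cite{HIT-10}) shows that each $d_n$ is a critical value of $\bar I$ and that $d_n \to \infty$: if $d_n \leq M$ for infinitely many $n$, the Palais--Smale condition would force the critical set $K_M := \{u \neq 0 : \bar I(u) \leq M,\ \bar I'(u) = 0\}$ to be compact and hence of finite Krasnoselski genus, while the very existence of $\gamma \in \Gamma_n$ close to the infimum forces $\{\bar I \leq M\}\setminus\{0\}$ to have genus at least $n$ for every $n$, a contradiction.

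The main obstacle is precisely this last compactness/genus step: the bound $|\bar H(s)| \leq C|s|^{2_\alpha^\ast}$ in Lemma \ref{definition:2.2}~(ii) is only critical, so coercivity is not automatic. What saves the argument is the Ambrosetti--Rabinowitz-type inequality $(p_0+1)\bar H(s) \leq s\bar h(s)$ from Lemma \ref{definition:2.2}~(iii) with $p_0 + 1 > 2$, which controls $\|u\|_\alpha$ along any $(\bar I)$-Palais--Smale sequence at a bounded level and lets the symmetric mountain pass machinery produce an unbounded sequence of critical values rather than one accumulating at a finite height.
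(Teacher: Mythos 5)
Your proof of $d_n \leq c_n = \tilde c_n$ is the same as the paper's: $d_n \leq c_n$ from $\bar I \leq I$; $\tilde c_n \leq c_n$ via the embedding $\gamma \mapsto (0,\gamma)$; and $c_n \leq \tilde c_n$ by the rescaling $\sigma \mapsto \gamma(\sigma)(e^{-\theta(\sigma)}\,\cdot\,)$, which you correctly verify lies in $\Gamma_n$ (oddness, boundary condition, continuity of the scaling map).

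For $d_n \to \infty$ the paper simply defers to \cite[Lemma~3.2]{HIT-10} and \cite{R}, so there is no written argument to match; you do identify the right ingredients (evenness, (PS) from the global Ambrosetti--Rabinowitz condition in Lemma~\ref{definition:2.2}~(iii), the geometry of Lemma~\ref{definition:2.3}~(i)). However, the contradiction you state at the end does not hold: finiteness of $\mathrm{genus}(K_M)$ is not inconsistent with $\mathrm{genus}(\{\bar I \leq M\}\setminus\{0\}) \geq n$ for all $n$. Indeed $\{\bar I \leq M\}\setminus\{0\}$ contains a punctured ball around the origin (since $\bar I(0)=0<M$ and $\bar I$ is continuous), hence has \emph{infinite} genus for every $M>0$, and this says nothing about $K_M$ since $K_M$ is not a deformation retract of that sublevel set. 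The actual argument in \cite{R,HIT-10} is a deformation-lemma/genus-subadditivity argument: assume $\sup_n d_n =: c^* < \infty$; (PS) makes $K_{c^*}$ compact, so $\mathrm{genus}(K_{c^*}) = m < \infty$; one then shows that any $\gamma\in\Gamma_n$ with $\max_{D_n}\bar I(\gamma)$ near $c^*$ forces the intersection $\gamma(D_n)\cap\partial B_{\rho_0}$ to have genus $\geq n$, and by deforming (via the even deformation lemma) away from a small neighborhood of $K_{c^*}$ and removing a set of genus $\leq m$, one produces for $n>m$ an admissible map in $\Gamma_{n-m}$ whose maximum drops strictly below $c^*-\e$, contradicting $d_{n-m}\to c^*$. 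You should replace the ``genus of the sublevel set'' step with this deformation step.
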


	\begin{proof}
By definition and Lemma \ref{definition:2.3}, $d_n \leq c_n$ is clear. 
Moreover, noting $(0,\gamma) \in \tilde{\Gamma}_n$ for all $\gamma \in \Gamma_n$ 
and $I(u) = \tilde{I}(0,u)$, we have $\tilde{c}_n \leq c_n$ for all $n \in \N$. 
On the other hand, let $\tilde{\gamma} =(\theta, \gamma) \in \tilde{\Gamma}_n$ 
and put $\zeta(\sigma) := \gamma(\sigma)(e^{-\theta(\sigma)} x )$, 
it follows from \eqref{eq:16} 
that $I(\zeta(\sigma)) = \tilde{I}(\theta(\sigma), \gamma(\sigma))$. 
From this and $\tilde{\gamma} \in \tilde{\Gamma}$, 
one can check that $\zeta \in \Gamma_n$ and 
$c_n \leq \tilde{c}_n$. Thus 
$c_n = \tilde{c}_n$ holds.

	The assertion $d_n \to \infty$ can be proved in a similar way to 
\cite[Lemma 3.2]{HIT-10} (see also \cite{R}) and we skip the details. 
	\end{proof}

% % % % % % % % % % % % % % % % % % % % % % % % % % % % % % % % % % %
% % % % % % % % % % % % % % % % % % % % % % % % % % % % % % % % % % %
% % % % % % % % % % % % % % % % % % % % % % % % % % % % % % % % % % %

\section{Proof of Theorem \ref{definition:1.1}}
\label{section:3}

% % % % % % % % % % % % % % % % % % % % % % % % % % % % % % % % % % %
% % % % % % % % % % % % % % % % % % % % % % % % % % % % % % % % % % %
% % % % % % % % % % % % % % % % % % % % % % % % % % % % % % % % % % %

		In this section, we shall prove Theorem \ref{definition:1.1}. 
We first show the assertion (i). 
To proceed, we first notice that 
	\begin{equation}\label{eq:19}
		\begin{aligned}
			D_u \tilde{I} (\theta, u) \varphi 
			= & e^{N\theta} \int_{\RN} (1 + 4 \pi^2 e^{-2 \theta} |\xi|^2)^\alpha 
			\hat{u} \ov{\hat{\varphi}} \rd \xi - e^{N\theta} \int_{\RN} f(u) \varphi \rd x
			\\
			=& \int_{\RN} ( 1 + 4 \pi^2 |\xi|^2)^\alpha 
			\widehat{u_\theta} \ov{\widehat{\varphi_{\theta}}} \rd \xi 
			- \int_{\RN} f(u_\theta) \varphi_{\theta} \rd x 
			= D_u \tilde{I} (0,u_\theta) \varphi_{\theta}
		\end{aligned}
	\end{equation}
where $\varphi_{\theta} (x) := \varphi(e^{-\theta} x)$.

	\begin{proposition}\label{definition:3.1}
Suppose that $((\theta_n,u_n))_{n=1}^\infty \subset \R \times \Har$ is a Palais--Smale 
sequence of $\tilde{I}$, namely, 
$\tilde{I}(\theta_n,u_n) \to c \in \R$ and 
$D_{(\theta,u)} \tilde{I}(\theta_n,u_n) \to 0$ strongly in $\R \times (\Har)^\ast$. 
Moreover, assume that $(\theta_n)$ is bounded. 
Then $(u_n)$ is bounded in $\Har$. 
	\end{proposition}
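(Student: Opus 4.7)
The plan is to translate the Palais--Smale hypothesis on $\tilde I$ into a Pohozaev-augmented PS condition on the unscaled sequence $v_n := (u_n)_{\theta_n}$, and then to prove boundedness of $(v_n)$ in $\Har$. Boundedness of $(v_n)$ is equivalent to boundedness of $(u_n)$ because $(\theta_n)$ is bounded and the rescaling $\varphi \mapsto \varphi_{\theta_n}$ is a uniformly bi-Lipschitz self-map of $\Har$ under $|\theta_n|\le C$. Using \eqref{eq:16}, \eqref{eq:17}, and \eqref{eq:19}, the PS hypothesis rewrites as
$$I(v_n) \to c, \qquad P(v_n) \to 0, \qquad \|I'(v_n)\|_{(\Har)^*} \to 0,$$
with $\|v_n\|_\alpha$ uniformly comparable to $\|u_n\|_\alpha$. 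From now on I work only with $(v_n)$.

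The first step is the algebraic identity, obtained by direct computation from \eqref{eq:5} and \eqref{eq:6}:
$$N\, I(v_n) - P(v_n) = \alpha \int_{\RN}(1+4\pi^2|\xi|^2)^{\alpha-1}\cdot 4\pi^2|\xi|^2\,|\widehat{v_n}|^2\,\rd\xi =: \alpha M(v_n).$$
The left-hand side tends to $Nc$, so $M(v_n)\to Nc/\alpha$; in particular $M(v_n)$ is bounded. The pointwise inequality $(1+x)^\alpha-1\le x(1+x)^{\alpha-1}$, valid for $x\ge 0$ and $\alpha\in(0,1)$ (the derivative of the difference is $(1-\alpha)(1+x)^{\alpha-2}\ge 0$, and both sides vanish at $x=0$), then gives
$$\|v_n\|_\alpha^2-\|v_n\|_{L^2}^2 = \int_{\RN}\bigl[(1+4\pi^2|\xi|^2)^\alpha-1\bigr]|\widehat{v_n}|^2\,\rd\xi \le M(v_n) \le C.$$
Thus the whole matter reduces to bounding $\|v_n\|_{L^2}$.

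For this last step I would test $I'(v_n)\to 0$ against $v_n$, yielding $\|v_n\|_\alpha^2-\int f(v_n)v_n\,\rd x = o(\|v_n\|_\alpha)$. Combining with the Berestycki--Lions estimate $f(s)s \le (1-\delta_0)s^2 + \bar h(s)s$ from Lemma~\ref{definition:2.2} and the trivial inequality $\|v_n\|_\alpha^2\ge \|v_n\|_{L^2}^2$ gives
$$\delta_0\|v_n\|_{L^2}^2 \le \int_{\RN}\bar h(v_n)v_n\,\rd x + o(\|v_n\|_\alpha).$$
The main obstacle is then to bound $\int \bar h(v_n)v_n$; unlike the local case $\alpha=1$ treated in \cite{HIT-10}, $M(v_n)$ does not by itself control the homogeneous seminorm $\|v_n\|_{\dot H^\alpha}$, so Sobolev into $L^{2_\alpha^\ast}$ is not immediate. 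The strategy is to couple the Ambrosetti--Rabinowitz-type inequality $(p_0+1)\bar H(s)\le s\bar h(s)$ (Lemma~\ref{definition:2.2}(iii)) and the bound $F(s)\le (1-\delta_0)s^2/2+\bar H(s)$ (Lemma~\ref{definition:2.2}(iv)) with the controlled quantities $I(v_n)=c+o(1)$, $P(v_n)=o(1)$, and $M(v_n)=Nc/\alpha+o(1)$ to express $\int \bar H(v_n)$, and hence $\int \bar h(v_n)v_n$, in terms of the bounded difference $\|v_n\|_\alpha^2-\|v_n\|_{L^2}^2$; the subcritical growth of $\bar h$ (Lemma~\ref{definition:2.2}(i)--(ii)), applied after a low-/high-frequency splitting of $v_n$ in which the high-frequency part is controlled in $H^\alpha$ via $M(v_n)$ and the low-frequency part is treated by Bernstein-type inequalities, should absorb the remaining $\int |v_n|^{2_\alpha^\ast}$ contribution and close the estimate, giving $\|v_n\|_{L^2}=O(1)$ and hence $\|u_n\|_\alpha=O(1)$.
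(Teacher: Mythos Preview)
Your reduction via the rescaling $v_n=(u_n)_{\theta_n}$, the identity $NI(v_n)-P(v_n)=\alpha M(v_n)$, the pointwise bound $(1+x)^\alpha-1\le x(1+x)^{\alpha-1}$ giving $\|v_n\|_\alpha^2-\|v_n\|_{L^2}^2\le M(v_n)\le C$, and the consequent reduction of the whole problem to bounding $\|v_n\|_{L^2}$, all match the paper exactly.

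The gap is in the final paragraph. Testing $I'(v_n)$ against $v_n$ and using $f(s)s\le(1-\delta_0)s^2+\bar h(s)s$ correctly gives $\delta_0\|v_n\|_{L^2}^2\le\int\bar h(v_n)v_n+o(\|v_n\|_{L^2})$. But your sketch for controlling $\int\bar h(v_n)v_n$ does not close. The controlled quantities $I(v_n),P(v_n),M(v_n)$ together with $\bar I\le I$ produce only a \emph{lower} bound on $\int\bar H(v_n)$, not an upper one, and the Ambrosetti--Rabinowitz inequality $(p_0+1)\bar H\le s\bar h$ points the wrong way for your purpose. A frequency splitting does not save this: the high-frequency piece is indeed bounded in $H^\alpha$ via $M(v_n)$, but the low-frequency piece $v_n^{lo}$ is only controlled in $L^2$ by $\|v_n\|_{L^2}$, and Bernstein then yields $\|v_n^{lo}\|_{L^p}\le C_p\|v_n\|_{L^2}$ for every $p\ge2$. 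Inserting this into any growth estimate $|\bar h(s)s|\le C|s|^p$ with $p>2$ gives $\int\bar h(v_n)v_n\le C\|v_n\|_{L^2}^{p}+C$, which is \emph{superquadratic} in $\|v_n\|_{L^2}$ and cannot be absorbed into $\delta_0\|v_n\|_{L^2}^2$.

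The paper closes the argument differently, by contradiction and rescaling. Assuming $\|v_n\|_{L^2}\to\infty$, one sets $\tau_n:=\|v_n\|_{L^2}^{-2/N}\to0$ and $w_n(x):=v_n(\tau_n^{-1}x)$, so that $\|w_n\|_{L^2}=1$. The bound on $M(v_n)$ transfers, after the rescaling, to show that $(w_n)$ is bounded in $H^\alpha_{\rm r}$ and in fact $w_n\rightharpoonup0$ weakly: for $N\ge3$ this comes from $\|w_n\|_{\dot H^\alpha}\to0$, while for $N=2$ a further frequency cutoff shows the weak limit $w_0$ lies in $H^1$ and satisfies $f(w_0)=w_0$, forcing $w_0\equiv0$ by (f2). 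Once $w_n\rightharpoonup0$ in $H^\alpha_{\rm r}$, compact embedding into $L^p$ for $2<p<2^\ast_\alpha$, together with the fact that $\bigl(f(s)s-(1-\delta_0)s^2\bigr)_+$ vanishes for $|s|\le s_1$, gives $\int\bigl(f(w_n)w_n-(1-\delta_0)w_n^2\bigr)_+\to0$, contradicting the lower bound $\delta_0$ obtained from $\|w_n\|_{L^2}=1$ and $I'(v_n)\to0$. The indispensable ingredient missing from your outline is this $L^2$-normalised rescaling together with the radial compactness; a direct quantitative bound on $\int\bar h(v_n)v_n$ in the original scale cannot succeed because the nonlinearity is genuinely superquadratic.
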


	\begin{proof}
Since $(\theta_n)$ is bounded, from \eqref{eq:17} and \eqref{eq:19}, we may assume 
$\theta_n = 0$ by replacing $u_n(x)$ by $u_n(e^{-\theta_n} x)$. 
Therefore, we have 
	\[
		\begin{aligned}
			c + o(1) = \tilde{I}(0,u_n) &=
			 \frac{1}{2} \| u_n \|_\alpha^2 
			- \int_{\RN} F(u_n) \rd x ,
			\\
			o(1) = D_\theta \tilde{I}(0,u_n) 
			&= \frac{N}{2} \| u_n \|_\alpha^2 
			- N \int_{\RN} F(u_n) \rd x 
			- 4\pi^2 \alpha \int_{\RN} (1+4\pi^2|\xi|^2)^{\alpha-1} |\xi|^2 |\wh{u_n}|^2  \rd \xi. 
		\end{aligned}
	\]
From these, it follows that 
	\begin{equation}\label{eq:20}
		\text{$ \displaystyle 
		\left( \int_{\RN} (1+4\pi^2|\xi|^2)^{\alpha-1} |\xi|^2 |\wh{u_n}|^2 \rd \xi 
		\right)_{n=1}^\infty$ is bounded.}
	\end{equation}
Thus if 
	\begin{equation}\label{eq:21}
		\text{$\displaystyle (\| u_n \|_{L^2}^2)_{n=1}^\infty 
		= \left( \int_{\RN} |\wh{u_n}|^2 \rd \xi \right)_{n=1}^\infty$ is bounded}, 
	\end{equation}
then from $(1+4\pi^2 |\xi|^2)^{\alpha-1} \leq 1$ and \eqref{eq:20} we get 
	\[
		\begin{aligned}
			\| u_n \|_\alpha^2 
			&= \int_{\RN} (1+4\pi^2|\xi|^2)^{\alpha-1} ( 1 + 4\pi^2|\xi|^2) 
			|\wh{u_n}(\xi)|^2 \rd \xi 
			\\
			&\leq \int_{\RN} |\wh{u_n}|^2 \rd \xi + 
			4 \pi^2 \int_{\RN} 
			(1+4\pi^2|\xi|^2)^{\alpha-1} |\xi|^2 |\wh{u_n}|^2 \rd \xi
		\end{aligned}
	\]
and $(u_n)$ is bounded in $\Ha$.

	Now we prove \eqref{eq:21} by contradiction and suppose that 
$\tau_n := \| u_n \|_{L^2}^{-2/N} \to 0$ as $n \to \infty$. 
Set $v_n(x) := u_n( \tau_n^{-1} x)$ and observe that 
	\begin{equation}\label{eq:22}
		\| v_n \|_{L^2}^2 = 1, \quad 
		\int_{\RN} |\xi|^{2\alpha} |\wh{v_n}|^2 \rd \xi 
		= \tau_n^{N-2\alpha} \int_{\RN} |\xi|^{2\alpha} |\wh{u_n}|^2 \rd \xi. 
	\end{equation}
Since there exist $C_1,C_2>0$ such that 
	\[
		C_1 |\xi|^2 \leq (1+4\pi^2|\xi|^2)^{\alpha-1}|\xi|^2 
		\quad {\rm if} \ |\xi| \leq 1, \quad 
		C_2 (1 + 4\pi^2 |\xi|^2)^\alpha 
		\leq (1 + 4\pi^2 |\xi|^2)^{\alpha-1} |\xi|^2 
		\quad {\rm if} \ |\xi| \geq 1,
	\]
we observe from \eqref{eq:20} that the quantities 
	\begin{equation}\label{eq:23}
		\text{$ \displaystyle \left( \int_{|\xi| \geq 1}  (1 + 4\pi^2 |\xi|^2)^\alpha 
		|\wh{u_n}|^2 \rd \xi \right)_{n=1}^\infty  
		\ {\rm and} \ 
		\left( \int_{|\xi| \leq 1} |\xi|^2|\wh{u_n}|^2 \rd \xi \right)_{n=1}^\infty
		$ are bounded.}
	\end{equation}
Thus we infer that  
	\[
		\int_{|\xi| \leq 1} | \wh{u_n} |^2 \rd \xi \to \infty. 
	\]
Now we divide our arguments into three steps. First we show 

\medskip

\noindent
{\bf Step 1:} {\sl When $N \geq 3$, 
$v_n \rightharpoonup 0$ weakly in $\Har$. }

\medskip

We first see that for $q  > 1$, by H\"older's inequality and \eqref{eq:23}, 
we obtain 
	\[
		\begin{aligned}
			\int_{|\xi| \leq 1} |\xi|^{2\alpha} |\wh{u_n}|^2 \rd \xi 
			&\leq \left( \int_{|\xi| \leq 1} |\xi|^2 |\wh{u_n}|^2 \rd \xi \right)^{1/q} 
			\left( \int_{|\xi| \leq 1} |\xi|^{2 (\alpha q - 1)/ (q-1)  } |\wh{u_n}|^2 \rd \xi
			 \right)^{1-1/q}
			\\
			& \leq C_q \left( \int_{|\xi| \leq 1} |\xi|^{2 (\alpha q - 1)/ (q-1)  } 
			|\wh{u_n}|^2 \rd \xi \right)^{1-1/q}.
		\end{aligned}
	\]
Choosing $q = \alpha^{-1} \in (1,\infty)$, one gets 
	\[
		\int_{|\xi| \leq 1} |\xi|^{2\alpha} |\wh{u_n}|^2 \rd \xi  
		\leq C_\alpha
		\left( \int_{|\xi| \leq 1} | \wh{u_n} |^2 \rd \xi  \right)^{1-\alpha} 
		\leq C_\alpha \tau_n^{-N(1-\alpha)}. 
	\]
Thus it follows from \eqref{eq:22}, \eqref{eq:23} and $N \geq 3 > 2 \alpha$ that 
	\[
		\begin{aligned}
			\int_{\RN} |\xi|^{2\alpha } |\wh{v_n}|^2 \rd \xi 
			&= \tau_n^{N-2\alpha} 
			\left( \int_{|\xi| \leq 1} + \int_{|\xi| \geq 1} \right) 
			|\xi|^{2\alpha} | \wh{u_n}|^2 \rd \xi 
			\\
			&\leq C_\alpha \tau_n^{(N-2)\alpha} 
			+ \tau_{n}^{N-2\alpha} \int_{|\xi| \geq 1} |\xi|^{2\alpha} 
			| \wh{u_n} |^2 \rd \xi 
			\to 0.
		\end{aligned}
	\]
By Sobolev's inequality $\| u \|_{L^{2^\ast_\alpha}} \leq C \| |\xi|^{\alpha} \wh{u} \|_{L^2}$, 
we obtain $v_n \to 0$ strongly in $L^{2_\alpha^\ast}(\RN)$. 
Thus $v_n \rightharpoonup 0$ weakly in $\Har$.

\medskip

\noindent
{\bf Step 2:} 
{\sl $v_n \rightharpoonup 0$ weakly in $\Har$ when $N = 2$.}

\medskip

	Let $\zeta_0 \in C^\infty_0(\RN)$ satisfy 
$0 \leq \zeta_0 \leq 1$, $\zeta_0(\xi) = 1$ for $|\xi| \leq 1$ and 
$\zeta_0(\xi) = 0$ for $|\xi| \geq 2$. 
Set 
	\[
		\zeta_n(\xi) := \zeta_0(\tau_n \xi), \quad 
		w_{n,1}(x) := \scrF^{-1} \left( \zeta_n(\xi) \wh{v_n}(\xi) \right), 
		\quad 
		w_{n,2} (x) := \scrF^{-1} \left( (1 - \zeta_n(\xi) ) \wh{v_n}(\xi) \right). 
	\]
Then one sees from the Plancherel theorem and 
\eqref{eq:23} that $v_n = w_{n,1} + w_{n,2}$ and 
	\begin{equation}\label{eq:24}
		\begin{aligned}
		& \| w_{n,1} \|_{L^2}^2 = \| \wh{w_{n,1}} \|_{L^2}^2 
		= \int_{\R^2} \zeta_n^2 |\wh{v_n}|^2 \rd \xi 
		\leq \| v_n \|_{L^2}^2 \leq 1, 
		\\
		&
		\| w_{n,2} \|_{L^2}^2 = 
		\tau_n^2 \int_{|\xi| \geq 1} (1-\zeta_0(\xi))^2 |\wh{u_n}|^2 \rd \xi 
		\to 0,
		\\
		& \int_{\RN} w_{n,1} \ov{w_{n,2} } \rd x 
		= \int_{\RN} \wh{w_{n,1}} \ov{\wh{w_{n,2}}} \rd \xi 
		= \tau_n^2 \int_{1 \leq |\xi| \leq 2} \zeta_0(\xi) 
		(1 - \zeta_0(\xi)) |\wh{u_n}|^2 \rd \xi \to 0.
		\end{aligned}
	\end{equation}
We also see that  
	\[
		\begin{aligned}
			\int_{\R^2} |\xi|^2 |\wh{w_{n,1}}|^2 \rd \xi 
			&= \tau_n^4 \int_{\R^2} |\xi|^2 \zeta_0(\tau_n \xi)^2 
			|\wh{u_n}(\tau_n \xi) |^2 \rd \xi 
			= \int_{\R^2} |\xi|^2 \zeta_0(\xi)^2 |\wh{u_n}|^2 \rd \xi 
			\leq \int_{|\xi| \leq 2} |\xi|^2 |\wh{u_n}|^2 \rd \xi.
		\end{aligned}
	\]
Thus, by \eqref{eq:23}, $(w_{n,1})$ is bounded in $H^1_{\rm r}(\R^2)$ and suppose that 
$w_{n,1} \rightharpoonup w_0$ weakly in $H^1(\RN)$.

	On the other hand, by 
	\[
		\int_{\R^2} |\xi|^{2\alpha} | \wh{w_{n,2}}|^2 \rd \xi 
		= \tau_n^{2-2\alpha} \int_{\R^2} 
		|\xi|^{2\alpha} (1 -\zeta_0(\xi))^2 |\wh{u_n}(\xi)|^2 \rd \xi 
		\leq \tau_n^{2-2\alpha} \int_{|\xi| \geq 1} 
		|\xi|^{2\alpha} |\wh{u_n}|^2 \rd \xi  \to 0,
	\]
it follows from \eqref{eq:24} that 
$w_{n,2} \to 0$ strongly in $\Ha$. Recalling $v_n = w_{n,1} + w_{n,2}$, 
we get 
	\begin{equation}\label{eq:25}
		v_n \rightharpoonup w_{0} \left(\in H^1(\RN)\right) \quad 
		{\rm weakly\ in}\ \Ha. 
	\end{equation}

	Now let $\varphi \in C_0^\infty(\RN)$ be radial and 
set $\varphi_n(x) := \varphi(\tau_n x)$. 
Noting 
	\[
		\| \tau_n^2 \varphi_n \|_\alpha^2 
		= \tau_n^2 \int_{\R^2} (1 + 4\pi^2|\xi|^2 \tau^2_n)^\alpha 
		| \wh{\varphi} |^2 \rd \xi \to 0
	\]
and using $D_u \tilde{I}(u_n,0) \to 0$, we infer that 
	\begin{equation}\label{eq:26}
		\begin{aligned}
			\int_{\R^2} f(v_n) \varphi \rd x 
			&=\tau_n^2 \int_{\R^2} f(u_n) \varphi_n \rd x 
			= \tau_n^2 \la u_n , \varphi_n \ra_\alpha 
			+ o ( \| \tau_n^2 \varphi_n \|_\alpha) 
			\\
			&= \int_{\R^2} 
			(1 + 4\pi^2 |\xi|^2 \tau_n^2)^\alpha 
			\wh{v_n} \ov{\wh{\varphi}} \rd \xi 
			+ o ( \| \tau_n^2 \varphi_n \|_\alpha  ). 
		\end{aligned}
	\end{equation}
Since $v_n \to w_0$ strongly in $L^p(\RN)$ for $2<p<2_\alpha^\ast$ 
due to \eqref{eq:25} and Lemma \ref{definition:2.1} (ii), 
by Strauss' lemma(\cite[Lemma 2]{S-77} or \cite[Theorem A.I]{BL-83-1}), (f3) 
and $\varphi \in C_0^\infty(\RN)$, one has 
	\begin{equation}\label{eq:27}
		\int_{\R^2} f(v_n) \varphi \rd x \to \int_{\R^2} f(w_0) \varphi \rd x. 
	\end{equation}
On the other hand, since $\wh{\varphi}$ is rapidly decreasing, it follows that 
	\begin{equation}\label{eq:28}
		\lim_{n \to \infty }\int_{\R^2}	(1 + 4\pi^2 |\xi|^2 \tau_n^2)^\alpha 
		\wh{v_n} \ov{\wh{\varphi}} \rd \xi 
		=  \int_{\R^2} \wh{w_0} \ov{\wh{\varphi}} \rd \xi 
		= \int_{\R^2} w_0 \varphi \rd x. 
	\end{equation}
Now by \eqref{eq:26}--\eqref{eq:28}, we finally obtain 
	\[
		\int_{\R^2} f(w_0) \varphi \rd x 
		= \int_{\R^2} w_0 \varphi \rd x
	\]
for all radial $\varphi \in C^\infty_0(\R^2)$, which yields 
	\begin{equation}\label{eq:29}
		f(w_0) - w_0 \equiv 0 \quad {\rm in}\ \R^2. 
	\end{equation}

	On the other hand, from (f2), one may find some $s_1>0$ such that 
$ s( f(s) - s )<0$ for all $|s| \leq s_1$ with $s \neq 0$. 
Thus by \eqref{eq:29}, $w_0 \in H^1_{\rm r}(\RN) \subset C(\RN \setminus \{0\})$ 
and $w_0(x) \to 0$ as $|x| \to \infty$, we conclude that 
$w_0 \equiv 0$ and Step 2 holds due to \eqref{eq:25}. 

\medskip

\noindent
{\bf Step 3:} 
{\sl Conclusion}

\medskip

	Now we derive a contradiction and conclude 
that \eqref{eq:21} holds. 
Since $(v_n)$ is bounded in $\Ha$ from Steps 1 and 2, 
we first remark that 
	\[
		\begin{aligned}
			\| \tau_n^N u_n \|_\alpha^2 
			&=  \int_{\RN} (1 + 4\pi^2 |\xi|^2)^\alpha 
			| \wh{v_n}(\tau_n^{-1} \xi ) |^2 \rd \xi 
			= \tau_n^N \int_{\RN} (1 + 4\pi^2|\xi|^2 \tau_n^2)^\alpha 
			| \wh{v_n} |^2 \rd \xi \to 0. 
		\end{aligned}
	\]
Let $\delta_0>0$ and $s_1>0$ be constants appearing in \eqref{eq:10}. 
It follows from $ 1 = \| v_n \|_{L^2}^2 \leq \tau_n^N \| u_n \|_\alpha^2$ and 
$D_u \tilde{I}(0,u_n) \to 0$ that 
	\begin{equation}\label{eq:30}
		\begin{aligned}
			\delta_0 &= \delta_0 \| v_n \|_{L^2}^2 
			\leq \tau_n^N \| u_n \|_\alpha^2 - (1-\delta_0) \| v_n \|_{L^2}^2
			\\
			&= \int_{\RN} f(u_n) \tau_n^N u_n \rd x + o(1)
			  - (1-\delta_0) \|v_n\|_{L^2}^2
			 \\
			 &= \int_{\RN} f(v_n) v_n - (1-\delta_0) v_n^2 \rd x 
			 + o(1) 
			 \\
			 &\leq \int_{\RN} \left( f(v_n) v_n - (1-\delta_0) v_n^2 \right)_+ \rd x +o(1). 
		\end{aligned}
	\end{equation}
By \eqref{eq:10}, we observe $\left( f(s) s - (1-\delta_0) s^2 \right)_+ = 0$ 
for $|s| \leq s_1$. 
Hence, arguing as in the proof of Lemma \ref{definition:2.2} (v), 
it follows from \eqref{eq:30} and $v_n \rightharpoonup 0$ weakly in $\Har$ 
due to Steps 1 and 2 that 
	\[
		\delta_0 \leq \int_{\RN} \left( f(v_n) v_n - (1-\delta_0) v_n^2 \right)_+ \rd x +o(1) 
		\to 0,
	\]
which is a contradiction. Thus \eqref{eq:21} holds and 
we complete the proof. 
	\end{proof}

Now we prove the existence of critical points of $I$ which 
satisfy the Pohozaev identity $P(u) = 0$ and 
correspond to $c_n$ in \eqref{eq:18}.

	\begin{proposition}\label{definition:3.2}
There exist a sequence $(u_n) \subset \Har$ such that 
$I'(u_n) = 0$, $I(u_n) = c_n$ and $P(u_n) = 0$. 
Especially, \eqref{eq:4} has infinitely many solutions 
satisfying the Pohozaev identity. 

	\end{proposition}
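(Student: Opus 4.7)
The plan is to apply a symmetric minimax argument to the augmented functional $\tilde I$ on the product space $\R\times\Har$, equipped with the $\Z_2$-action $(\theta,u)\mapsto(\theta,-u)$, using the class $\tilde\Gamma_n$. By Lemma \ref{definition:2.5} we have $\tilde c_n=c_n\ge d_n\to\infty$, so producing one critical point of $\tilde I$ at each level $\tilde c_n$ yields infinitely many solutions with divergent energies. The main advantage of working with $\tilde I$ rather than with $I$ is that, by \eqref{eq:17}, its $\theta$-derivative encodes the Pohozaev functional via $D_\theta\tilde I(\theta,u)=P(u(\cdot/e^\theta))$, so the vanishing of $D_\theta\tilde I$ in the limit automatically delivers the identity $P(u_n)=0$ at no extra cost.

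The key (and most delicate) step is to produce, for each $n$, a Palais--Smale sequence $((\theta_k,u_k))_{k\ge 1}$ of $\tilde I$ at level $\tilde c_n$ whose $\theta$-component is bounded (indeed, with $\theta_k\to 0$). For this I would combine a standard symmetric deformation lemma on $\R\times\Har$ (cf.~\cite{R}) with the equality $\tilde c_n=c_n$ of Lemma \ref{definition:2.5}, which says that the minimax value is already approximated by paths of the form $(0,\gamma)$ with $\gamma\in\Gamma_n$. Applying Ekeland's variational principle to a symmetric almost-optimal minimizing family of such paths in $\tilde\Gamma_n$, in the spirit of \cite{J-97} and \cite{HIT-10}, produces a Palais--Smale sequence with $\theta_k\to 0$. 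Proposition \ref{definition:3.1} then guarantees that $(u_k)$ is bounded in $\Har$.

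To conclude, I pass to the limit. Using \eqref{eq:16}, \eqref{eq:17} and \eqref{eq:19}, replacing $u_k$ by $u_k(\cdot/e^{\theta_k})$ preserves the $I$-level and the size of $I'$ and converts $D_\theta\tilde I$ into $P$, so I may assume $\theta_k=0$, obtaining $I(u_k)\to c_n$, $I'(u_k)\to 0$ in $(\Har)^*$, and $P(u_k)\to 0$. The compact embedding $\Har\hookrightarrow L^p(\RN)$ for $2<p<2_\alpha^\ast$ (Lemma \ref{definition:2.1}\,(ii)), combined with a splitting of $f$ analogous to that used in the proof of Lemma \ref{definition:2.2}\,(v), allows me to pass to the limit in the nonlinear quantities $\int_{\RN}f(u_k)\varphi\,\rd x$ and $\int_{\RN}F(u_k)\,\rd x$. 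Extracting a weak limit $u_k\rightharpoonup u_n$ in $\Har$ and taking the limit in $I'(u_k)\to 0$ yields $I'(u_n)=0$; taking the limit in $P(u_k)\to 0$ and $I(u_k)\to c_n$ then yields $P(u_n)=0$ and $I(u_n)=c_n$. Since $c_n\ge d_n\to\infty$, the resulting $(u_n)$ are infinitely many nontrivial solutions of \eqref{eq:4} with divergent energies and the Pohozaev identity.

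The principal obstacle is the construction in the second step of a Palais--Smale sequence with $(\theta_k)$ bounded: without this, Proposition \ref{definition:3.1} does not apply and $(u_k)$ may be unbounded, killing any hope of extracting a nontrivial critical point. A secondary but essentially routine difficulty is the strong $L^p$-convergence needed to handle the compact part of the nonlinear term, which follows from the radial compactness lemma combined with the Strauss-type truncation argument of Lemma \ref{definition:2.2}\,(v).
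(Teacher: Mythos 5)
Your overall strategy is the same as the paper's: pass to the augmented functional $\tilde I$, exploit $\tilde c_n = c_n$, apply Ekeland's variational principle to a family of paths of the form $(0,\gamma)$ to get a Palais--Smale sequence with $\theta_k\to 0$, invoke Proposition~\ref{definition:3.1} for boundedness, and use \eqref{eq:17} to convert $D_\theta\tilde I\to 0$ into $P\to 0$. All of this matches the paper.

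However, your concluding step contains a genuine gap. After extracting a weak limit $u_k\rightharpoonup u_n$ in $\Har$, you claim that ``taking the limit in $P(u_k)\to 0$ and $I(u_k)\to c_n$ then yields $P(u_n)=0$ and $I(u_n)=c_n$.'' This does not follow from weak convergence: both $I$ and $P$ contain the quadratic terms $\|u\|_\alpha^2$ and $\int_{\RN}(1+4\pi^2|\xi|^2)^{\alpha-1}|\hat u|^2\,\rd\xi$, which are only weakly lower semicontinuous, so at best you get $I(u_n)\le c_n$. You need to first prove $u_k\to u_n$ \emph{strongly} in $\Har$, and passing to the limit in $\int F(u_k)$ and $\int f(u_k)\varphi$, as you propose, is not sufficient for that. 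The paper's argument proceeds differently: it first deduces $I'(u_n)=0$, hence $\|u_n\|_\alpha^2-(1-\delta_0)\|u_n\|_{L^2}^2=\int_{\RN}\{f(u_n)u_n-(1-\delta_0)u_n^2\}\,\rd x$; it then splits $f(s)-(1-\delta_0)s=f_1(s)+f_2(s)$ with $sf_1(s)\le 0$ and $f_2$ vanishing on a neighborhood of $0$, uses Fatou's lemma for the $f_1$-part (a one-sided $\limsup$ estimate, not convergence) and compactness for the $f_2$-part to get $\limsup_k\|u_k\|^2\le\|u_n\|^2$. Combined with weak lower semicontinuity this forces strong convergence, after which $I$ and $D_\theta\tilde I$ pass to the limit by continuity. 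That sign-based $\limsup$ argument is the missing core of the conclusion, and it is not subsumed by the ``pass to the limit in the nonlinear integrals'' claim you make.
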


	\begin{proof}
By Lemma \ref{definition:2.5}, we have $c_n = \tilde{c}_n$. 
Hence, there exists a sequence $(\gamma_{n,k}) \subset \Gamma_{n}$ such that 
	\[
		\max_{\sigma \in D_n} \tilde{I}(0,\gamma_{n,k}(\sigma)) 
		= \max_{\sigma \in D_n} I(\gamma_{n,k}(\sigma)) \to \tilde{c}_n. 
	\]
Applying Ekeland's variational principle to $(\gamma_{n,k})$ and $\tilde{I}$, 
there exist $(\theta_{n,k},u_{n,k}) \in \R \times \Har$ such that 
	\[
		{\rm dist}\, \left( (\theta_{n,k},u_{n,k}) , \{0\} \times \gamma_{n,k} (D_n) 
		\right) 
		\to 0, \quad \tilde{I}(\theta_{n,k},u_{n,k}) \to \tilde{c}_n, 
		\quad 
		D_{(\theta,u)} \tilde{I} (\theta_{n,k}, u_{n,k}) \to 0.
	\]
In particular, $\theta_{n,k} \to 0$. Thus by Proposition \ref{definition:3.1}, 
$(u_{n,k})_k$ is bounded in $\Har$.

	Now assume $u_{n,k} \rightharpoonup u_{n,0}$ weakly in $\Har$ and 
$u_{n,k} \to u_{n,0}$ strongly in $L^p(\RN)$ for $2 < p < 2_\alpha^\ast$. 
Let $\delta_0,s_1>0$ be constants in \eqref{eq:10}. 
By (f4), $\sup_{s \in [0,\infty)} (f(s) - (1-\delta_0 )s ) > 0$. 
Since $f(s)$ is odd, we may find an $s_+ >0$ satisfying 
	\[
		f(\pm s_+) - (1-\delta_0) (\pm s_+) =0, 
		\quad f(s)  - (1-\delta_0) s \neq 0 \quad {\rm for}\ s \in (-s_+,s_+) \setminus \{0\}. 
	\]
Set $f_1(s) := f(s)-(1-\delta_0) s$ if $ s \in [-s_+,s_+]$ and 
$f_1(s) := 0$ otherwise, and $f_2(s) : = f(s) - (1-\delta_0) s - f_1(s)$. 
Remark that 
$s f_1(s) \leq 0$ for all $s \in \R$ and 
$f_2(s) s = 0$ for all $s \in [-s_+,s_+]$.

By $D_u \tilde{I}(u_{n,k}) \to 0$, one can check $I'(u_{n,0}) = 0$. 
Moreover, note that a norm defined by 
	\[
		\| u \|^2 := \| u \|_\alpha^2 - (1-\delta_0) \| u \|_{L^2}^2
	\]
is equivalent to $\| \cdot \|_\alpha$. 
Thus, arguing as in Step 3 of Proposition \ref{definition:3.1}
(see also the proof of Lemma \ref{definition:2.2} (v)), 
from the boundedness of $(u_{n,k})$,
$D_u \tilde{I}( \theta_{n_k}, u_{n_k} ) \to 0$, $\theta_{n_k} \to 0$, 
Fatou's lemma to $f_1(u_{n,k})u_{n,k}$, 
properties of $f_i(s)$ ($i=1,2$), $I'(u_{n,0}) = 0$ and 
the weak convergence of $(u_{n,k})$, we observe that   
	\[
		\begin{aligned}
			\limsup_{k\to \infty}\| u_{n,k} \|^2 &=
			\limsup_{k\to \infty} \int_{\RN} f(u_{n,k}) u_{n,k} - (1-\delta_0) u_{n,k}^2 \rd x 
			\\
			& \leq  \limsup_{k\to \infty}\int_{\RN} f_1(u_{n,k}) u_{n,k} \rd x 
			+ \limsup_{k\to \infty} \int_{\RN} f_2(u_{n,k}) u_{n,k} \rd x
			\\
			&\leq \int_{\RN} f_1(u_{n,0}) u_{n,0} \rd x  
			+ \int_{\RN} f_2(u_{n,0}) u_{n,0} \rd x 
			\\
			&= \int_{\RN} f(u_{n,0}) u_{n,0} - (1-\delta_0) u_{n,0}^2 \rd x 
			= \| u_{n,0} \|^2 \leq \liminf_{k\to \infty} \| u_{n,k} \|^2.
		\end{aligned}
	\]
This implies that $u_{n,k} \to u_{n,0}$ strongly in $\Har$. 
Therefore, $I(u_{n,k}) \to \tilde{c}_n = c_n = I(u_{n,0})$ and $I'(u_{n,0}) = 0$. 
Moreover, recalling \eqref{eq:17}, we have 
	\[
		\lim_{k \to \infty} 
		D_\theta \tilde{I}(\theta_{n,k},u_{n,k}) \to 0 
		= D_\theta \tilde{I}(0,u_{n,0}) = P(u_{n,0}). 
	\]
This completes the proof. 
	\end{proof}

By Proposition \ref{definition:3.2}, a set 
	\[
		S := \left\{ u \in \Ha \ |\ 
		u \not \equiv 0, \ I'(u) = 0, \ P(u) = 0   \right\}
	\]
is not empty. Moreover, we have $c_{\rm LES} \leq c_1$. 
Next we show

	\begin{proposition}\label{definition:3.3}
For every $u \in \Ha$ with $P(u) = 0$ and $u \not \equiv 0$, 
a path $\gamma_u(t) := u(x/t)$ for $t>0$ and $\gamma_u(0) := 0$ 
satisfies 
	\[
		\gamma_u \in C([0,\infty), \Ha), \quad 
		I(u) > I(\gamma_u(t)) \quad {\rm for\ any} \ t \neq 1, \quad 
		I(\gamma_u(t)) \to - \infty \quad {\rm as}\ t \to \infty.
	\]
	\end{proposition}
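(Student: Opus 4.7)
The plan is to reduce the entire statement to monotonicity of the scalar function $\phi(\theta) := I(u_\theta)$, where $u_\theta(x) := u(e^{-\theta} x)$, so that $\gamma_u(e^\theta) = u_\theta$. Continuity of $\gamma_u$ on $(0,\infty)$ into $\Ha$ follows from continuity of the dilation map. For continuity at $t = 0$, I will use the representation $\| u_\theta \|_\alpha^2 = e^{N\theta} \int (1 + 4 \pi^2 e^{-2\theta} |\xi|^2)^\alpha |\wh u|^2 \rd \xi$ from \eqref{eq:16} together with the inequality $(1 + 4 \pi^2 e^{-2\theta} |\xi|^2)^\alpha \leq e^{-2\alpha \theta} (1 + 4\pi^2 |\xi|^2)^\alpha$ valid for $\theta \leq 0$, yielding $\| u_\theta \|_\alpha^2 \leq e^{(N-2\alpha) \theta} \| u \|_\alpha^2 \to 0$ as $\theta \to -\infty$, since $N > 2\alpha$.

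By \eqref{eq:17}, $\phi \in C^1(\R)$ with $\phi'(\theta) = P(u_\theta)$, and the hypothesis $P(u) = 0$ gives $\phi'(0) = 0$. To show that $\phi$ has a strict global maximum at $\theta = 0$, I would introduce $Q(\theta) := e^{-N\theta} P(u_\theta)$; since $\phi'(\theta) = e^{N\theta} Q(\theta)$, it suffices to check that $Q$ is strictly decreasing. Differentiating under the integral and simplifying via the identity $(N-2\alpha) + 2(\alpha-1) = N - 2$, I expect
\[ Q'(\theta) = - 4 \pi^2 \alpha e^{-2\theta} \int |\xi|^2 (1 + 4 \pi^2 e^{-2\theta} |\xi|^2)^{\alpha - 2} \bigl[(N-2) + (N - 2\alpha)\, 4 \pi^2 e^{-2\theta}|\xi|^2 \bigr] |\wh u|^2 \rd \xi . \]
Since $N \geq 2$ and $N - 2 \alpha > 0$, the bracket is nonnegative and strictly positive for every $\xi \neq 0$, so $Q'(\theta) < 0$ (using $u \not\equiv 0$ to ensure $\wh u$ is not supported at $\{0\}$). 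Hence $Q$ is strictly decreasing with $Q(0) = 0$, so $P(u_\theta)$ has the sign opposite to $\theta$, whence $I(u) > I(\gamma_u(t))$ for every $t > 0$ with $t \neq 1$.

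For $t = 0$ the claim becomes $I(u) > 0 = I(\gamma_u(0))$, which I would obtain by eliminating $\int F(u) \rd x$ via $P(u) = 0$, leading to
\[ I(u) = \frac{\alpha}{N} \int (1 + 4 \pi^2 |\xi|^2)^{\alpha - 1} (4 \pi^2 |\xi|^2) |\wh u|^2 \rd \xi > 0 . \]
For the asymptotic $I(\gamma_u(t)) \to -\infty$ as $t \to \infty$, applying the subadditivity $(1+s)^\alpha \leq 1 + s^\alpha$ in \eqref{eq:16} gives
\[ I(\gamma_u(t)) \leq t^N \Bigl[ \tfrac{1}{2} \| u \|_{L^2}^2 - \int F(u) \rd x \Bigr] + C\, t^{N - 2 \alpha} , \]
so it suffices to prove $\int F(u) \rd x > \tfrac{1}{2} \| u \|_{L^2}^2$. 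Using Plancherel together with $P(u) = 0$, this difference equals $\int h(4 \pi^2 |\xi|^2) |\wh u|^2 \rd \xi$ with $h(s) := \tfrac{N-2\alpha}{2N}(1+s)^\alpha + \tfrac{\alpha}{N}(1+s)^{\alpha - 1} - \tfrac{1}{2}$. A direct calculation shows $h(0) = 0$ and $h'(s) = \tfrac{\alpha (1+s)^{\alpha - 2}}{2N} [(N-2) + (N - 2\alpha) s] > 0$ for $s > 0$ in every dimension $N \geq 2$, so $h > 0$ on $(0,\infty)$ and the strict inequality follows.

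The main technical obstacle is the sign analysis in the formula for $Q'$: the two summands coming from $\partial_\theta (1 + 4\pi^2 e^{-2\theta}|\xi|^2)^\alpha$ and $\partial_\theta (1 + 4\pi^2 e^{-2\theta}|\xi|^2)^{\alpha - 1}$ have opposite signs and only combine into a manifestly negative expression via the algebraic identity $(N - 2\alpha) + 2(\alpha - 1) = N-2$. The same identity drives the analysis of $h'$. The delicate case is $N = 2$, where the $(N-2)$ contribution vanishes and strict positivity must be recovered from the surviving $(N - 2\alpha)s$ factor.
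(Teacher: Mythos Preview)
Your proof is correct and follows essentially the same route as the paper. Your function $Q(\theta)=e^{-N\theta}P(u_\theta)$ is exactly the paper's $g(t)$ under the change of variables $t=e^\theta$, and your computation of $Q'(\theta)$ reproduces (after the chain rule) the paper's formula for $g'(t)$, including the key algebraic simplification $(N-2\alpha)+2(\alpha-1)=N-2$ that makes the sign transparent. The only differences are cosmetic: the paper works directly in the $t$-variable and is terser about continuity at $t=0$ and about the asymptotic $I(\gamma_u(t))\to-\infty$ (deducing the latter immediately from $g(t)\le g(2)<0$ for $t\ge 2$), whereas you add the explicit identities $I(u)=\tfrac{\alpha}{N}\int(1+4\pi^2|\xi|^2)^{\alpha-1}4\pi^2|\xi|^2|\wh u|^2\,\rd\xi$ and the $h$-function analysis, which are correct but not needed once the strict monotonicity of $Q$ (equivalently $g$) is in hand.
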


	\begin{proof}
For $t>0$, one sees 
	\[
		\| \gamma_u(t) \|_{L^2}^2 = t^N \| u \|_{L^2}^2, \quad 
		\| |\xi|^\alpha \wh{\gamma_u(t)} \|_{L^2}^2 
		= t^{N-2\alpha} \int_{\RN} |\xi|^{2\alpha} | \wh{u} |^2 \rd \xi. 
	\]
Thus $\gamma_u \in C([0,\infty), \Ha)$. Furthermore, it follows 
from $I(\gamma_u(t)) = \tilde{I} ( \log t , u  )$ and \eqref{eq:17} that 
	\[
		\begin{aligned}
			\frac{\rd}{\rd t} I(\gamma_u(t)) 
			&= D_\theta\tilde{I} (\log t, u) \frac{1}{t} 
			\\
			&= t^{N-1} 
			\bigg\{ \frac{N}{2}
			\int_{\RN} 
			\left( 1 + 4\pi^2  \frac{|\xi|^2}{t^2} \right)^\alpha |\wh{u}|^2 \rd \xi 
			- N \int_{\RN} F(u) \rd x 
			\\
			& \qquad  \qquad 
			- \alpha \int_{\RN} \left( 1 + 4 \pi^2 \frac{|\xi|^2}{t^2} \right)^{\alpha-1} 
			4\pi^2
			\frac{|\xi|^2}{t^2} |\wh{u}|^2 \rd \xi \bigg\}
			\\
			&=t^{N-1} 
			\bigg\{  \frac{N-2\alpha}{2} \int_{\RN} 
			\left( 1 + 4\pi^2  \frac{|\xi|^2}{t^2} \right)^\alpha |\wh{u}|^2 \rd \xi 
			- N \int_{\RN} F(u) \rd x 
			\\
			& \qquad \qquad + \alpha \int_{\RN}
			\left( 1 + 4\pi^2  \frac{|\xi|^2}{t^2} \right)^{\alpha-1} |\wh{u}|^2 \rd \xi 
			 \bigg\}. 
		\end{aligned}
	\]
Now set 
	\[
		g(t) := \frac{N-2\alpha}{2} \int_{\RN} 
					\left( 1 + 4\pi^2  \frac{|\xi|^2}{t^2} \right)^\alpha |\wh{u}|^2 \rd \xi 
					- N \int_{\RN} F(u) \rd x
				+ \alpha \int_{\RN}
				\left( 1 + 4\pi^2  \frac{|\xi|^2}{t^2} \right)^{\alpha-1} |\wh{u}|^2 \rd \xi .
	\]
Then we get 
	\[
		\begin{aligned}
			g'(t) &= 
			\frac{N-2\alpha}{2} \alpha \int_{\RN} 
			\left( 1 + 4\pi^2  \frac{|\xi|^2}{t^2} \right)^{\alpha-1} 
			(-2) \frac{4\pi^2 |\xi|^2}{t^3} |\wh{u}|^2 \rd \xi 
			\\
			& \qquad 
			+ \alpha (\alpha -1) \int_{\RN} 
			\left( 1 + 4\pi^2  \frac{|\xi|^2}{t^2} \right)^{\alpha-2} 
			(-2) \frac{4\pi^2 |\xi|^2}{t^3} |\wh{u}|^2 \rd \xi
			\\
			&= -2 \alpha 
			\int_{\RN} \left( 1 + 4\pi^2  \frac{|\xi|^2}{t^2} \right)^{\alpha-2} 
			\frac{4\pi^2|\xi|^2}{t^3} |\wh{u}|^2 
			\left\{ \frac{N-2\alpha}{2} \left( 1 + 4\pi^2  \frac{|\xi|^2}{t^2} \right) 
			+ (\alpha-1)  \right\} 
			\rd \xi 
			\\
			&= -2 \alpha \int_{\RN} 
			\left( 1 + 4\pi^2  \frac{|\xi|^2}{t^2} \right)^{\alpha-2} 
			\frac{4\pi^2 |\xi|^2}{t^3} |\wh{u}|^2 
			\left\{ \left( \frac{N}{2} - 1 \right) + \frac{2\pi^2(N-2\alpha) |\xi|^2}{t^2}  \right\} 
			\rd \xi.
		\end{aligned}
	\]
Since $N \geq 2$ and $N > 2 \alpha$, one has  
$g'(t) < 0$ for all $t > 0$. Noting that 
$\rd I(\gamma_u(t))/ \rd t = t^{N-1} g(t)$ and that 
$P(u) = 0$ is equivalent to $g(1) = 0$, we see that 
	\[
		\frac{\rd}{\rd t} I(\gamma_u(t)) > 0 \quad {\rm if} \ 0 < t < 1, \quad 
		\frac{\rd}{\rd t} I(\gamma_u(t)) < 0 \quad {\rm if} \ 1 < t,
	\]
which implies that $I(\gamma_u(t))$ has a unique maximum at $t=1$. 
From the monotonicity of $g(t)$ and $g(1) = 0$, 
it is clear that $I(\gamma_u(t)) \to - \infty$ as $t \to \infty$ and 
Proposition \ref{definition:3.3} holds. 
	\end{proof}

	Before proceeding to a proof of $c_1 = c_{\rm LES}$, 
we define the following quantities:  
	\begin{equation}\label{eq:31}
	\begin{aligned}
		c_{\rm MP,r} &:= \inf_{\gamma \in \Gamma_{\rm r}} \max_{0 \leq t \leq 1} 
		I (\gamma(t)), \qquad 
		\Gamma_{\rm r} := \left\{ \gamma \in C([0,1], \Har \ |\ 
		\gamma(0) = 0, \ I(\gamma(1)) < 0 \right\},
		\\ 
		c_{\rm MP} &:= \inf_{\gamma \in \Gamma} \max_{0 \leq t \leq 1} 
		I (\gamma(t)), 
		\qquad 
		\Gamma := \left\{ \gamma \in C([0,1], \Ha \ |\ 
		\gamma(0) = 0, \ I(\gamma(1)) < 0 \right\}. 
		\\
		\tilde{c}_{\rm MP,r} 
		&:= \inf_{\tilde{\gamma} \in \tilde{\Gamma}_{\rm r}} 
		\max_{ 0 \leq t \leq 1 } 
		\tilde{I} ( \tilde{\gamma}(t) ), 
		\\
		\tilde{\Gamma}_{\rm r} &:= 
		\left\{ \tilde{\gamma}=(\theta, \gamma) \in C([0,1], \R \times \Har ) \ 
		|\ \gamma \in \Gamma_{\rm r}, \ 
		\theta(0) = 0 = \theta(1)  \right\}.
		\end{aligned}
	\end{equation}
Then we show 
	\begin{lemma}\label{definition:3.4}
		$0 < c_{\rm MP} = c_{\rm MP,r}= c_1 = c_{\rm LES} 
		= \tilde{c}_{\rm MP,r}$. 
	\end{lemma}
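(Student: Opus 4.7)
The plan is to establish the chain $c_{\rm MP}\le c_{\rm MP,r}\le c_1$ and $c_{\rm MP}\le c_{\rm LES}\le c_1$, together with $c_{\rm MP,r}=\tilde{c}_{\rm MP,r}$, and then close the cycle with $c_1\le c_{\rm MP,r}$. The trivial half is immediate from the definitions: $\Gamma_{\rm r}\subset\Gamma$ via $\Har\subset\Ha$, giving $c_{\rm MP}\le c_{\rm MP,r}$. Restricting any $\gamma\in\Gamma_1$ to $\sigma\in[0,1]$ yields an element of $\Gamma_{\rm r}$ with the same maximum, since $I$ is even (as $f$ is odd) and $I(\gamma_1(1))<0$ by Lemma~\ref{definition:2.3}(iii); hence $c_{\rm MP,r}\le c_1$. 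The identity $c_{\rm MP,r}=\tilde{c}_{\rm MP,r}$ is proved exactly as in Lemma~\ref{definition:2.5}: $(0,\gamma)\in\tilde\Gamma_{\rm r}$ for every $\gamma\in\Gamma_{\rm r}$, and conversely, given $(\theta,\gamma)\in\tilde\Gamma_{\rm r}$, the reparametrized path $\zeta(t)(x):=\gamma(t)(e^{-\theta(t)}x)$ lies in $\Gamma_{\rm r}$ thanks to $\theta(0)=\theta(1)=0$, with $I(\zeta(t))=\tilde{I}(\theta(t),\gamma(t))$ via \eqref{eq:16}. Positivity $c_{\rm MP}>0$ is the usual intersection argument from Lemma~\ref{definition:2.3}(i): every $\gamma\in\Gamma$ crosses the sphere $\{\|u\|_\alpha=\rho_0\}$, where $I\ge\bar{I}>0$.

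For $c_{\rm MP}\le c_{\rm LES}$, I would apply the scaling path of Proposition~\ref{definition:3.3}: for any $v\in S$, the curve $\gamma_v(t)=v(\cdot/t)$ attains its unique maximum $I(v)$ at $t=1$ and satisfies $I(\gamma_v(t))\to-\infty$ as $t\to\infty$; after reparametrizing to $[0,1]$ with $\gamma_v(0)=0$ (continuity at $0$ follows from $\|u(\cdot/t)\|_\alpha\to 0$ as $t\to 0^+$, which uses $N>2\alpha$), we obtain a path in $\Gamma$ with maximum $I(v)$, giving $c_{\rm MP}\le I(v)$ for all $v\in S$. The inequality $c_{\rm LES}\le c_1$ is supplied directly by Proposition~\ref{definition:3.2} with $n=1$, which yields $u_1\in\Har\cap S$ with $I(u_1)=c_1$.

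The crucial remaining step is $c_1\le c_{\rm MP,r}$. My plan is to run Ekeland's variational principle on the augmented functional $\tilde{I}$ at the level $\tilde{c}_{\rm MP,r}=c_{\rm MP,r}$, producing a Palais--Smale sequence $((\theta_k,u_k))\subset\R\times\Har$ with $\theta_k\to 0$; Proposition~\ref{definition:3.1} gives that $(u_k)$ is bounded in $\Har$. The compact embedding $\Har\hookrightarrow L^p$ for $2<p<2^\ast_\alpha$, combined with the decomposition $f=f_1+f_2+(1-\delta_0)\,\mathrm{id}$ and Fatou's lemma exactly as in Proposition~\ref{definition:3.2}, then yields strong convergence to a radial critical point $u_\ast\in\Har\cap S$ with $I(u_\ast)=c_{\rm MP,r}$ (nontriviality follows from $c_{\rm MP,r}\ge c_{\rm MP}>0$). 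Now Proposition~\ref{definition:3.3} applied to $u_\ast$ produces a radial scaling path whose antisymmetric extension over $D_1$ has endpoints $\pm u_\ast(\cdot/T)$ for large $T$ and maximum $c_{\rm MP,r}$. The \textbf{main obstacle} is to match these endpoints with $\pm\gamma_1(1)$, as demanded by $\Gamma_1$, without raising the maximum of $I$. My approach is to exploit the monotonicity $\|u(\cdot/s)\|_\alpha^2\le s^N\|u\|_\alpha^2$ for $s\ge 1$ (from $(1+4\pi^2|\xi|^2/s^2)^\alpha\le(1+4\pi^2|\xi|^2)^\alpha$ when $s\ge 1$), which implies $I(w(\cdot/s))\le s^N I(w)<0$ for every $w$ with $I(w)<0$; hence both $u_\ast(\cdot/T)$ and $\gamma_1(1)$ can be driven into an arbitrarily deep common sublevel of $I$, where a continuous connecting segment (for example, via linear interpolation at that deep scale, followed by a further rescaling if necessary) can be constructed and grafted onto the antisymmetric extension, producing the desired $\tilde\gamma\in\Gamma_1$ with $\max_{D_1}I(\tilde\gamma)=c_{\rm MP,r}$. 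This endpoint-homotopy is the delicate point of the proof, since $\{I<0\}\cap\Har$ need not be path-connected a priori, and the energy along the connecting segment must be controlled throughout.
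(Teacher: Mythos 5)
Your chain of easy inequalities ($c_{\rm MP}\le c_{\rm MP,r}\le c_1$, $c_{\rm MP}\le c_{\rm LES}\le c_1$, $c_{\rm MP,r}=\tilde c_{\rm MP,r}$, and $c_{\rm MP}>0$) is correct and matches the paper. The genuine gap is precisely the step you flag as ``the main obstacle'': closing the cycle via $c_1\le c_{\rm MP,r}$. Even granting that Ekeland on $\tilde I$ yields a radial critical point $u_\ast\in\Har\cap S$ with $I(u_\ast)=c_{\rm MP,r}$, this only gives $c_{\rm LES}\le c_{\rm MP,r}$ and does not produce an element of $\Gamma_1$; to get $c_1\le c_{\rm MP,r}$ you must actually build a path in $C(D_1,\Har)$ that is antisymmetric, agrees with $\gamma_1$ on $\partial D_1$, and has maximum $\le c_{\rm MP,r}$. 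Your proposed endpoint-homotopy via ``linear interpolation at a deep sublevel'' is not justified: $I$ is not convex (nor quasiconvex) on sublevel sets, so the segment $s\mapsto su_\ast(\cdot/T)+(1-s)\gamma_1(1)$ can leave $[I<0]$ --- for instance the very phenomenon you try to avoid by going deep, namely cancellation of the negative $\int F$ term against the quadratic part, can occur along the chord. Moreover, the translation trick that makes $I$ additive (supports drifting apart) destroys radial symmetry, so it cannot be used inside $\Har$. This is not a technical loose end but the essential difficulty of the lemma.

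The paper resolves it by a route you do not use: it first proves $c_{\rm MP}=d$, where $d$ is the fixed-endpoint inf-max in the \emph{full} space $\Ha$, by showing $[I<0]\cap\Ha$ (not $\Har$) is path-connected --- there translations are available, and the scaling path of Proposition~\ref{definition:3.3} plus a far translate of $u_2$ produce explicit connecting paths on which $I$ stays negative because the energies essentially decouple. Then, to pass from $\Ha$-paths to $\Har$-paths, the paper applies $u\mapsto|u|$ (using \eqref{eq:34}) followed by Schwarz symmetrization, invoking \cite[Theorem 9.2]{AL-89} for continuity in $t$ and \cite[Proposition 4]{Se} for $\|\,\cdot^\ast\|_\alpha\le\|\cdot\|_\alpha$, and crucially using Remark~\ref{definition:2.4} that $\gamma_1(1)$ is already nonnegative, radial, and nonincreasing so that $(\gamma_1(1))^\ast=\gamma_1(1)$, preserving the endpoint. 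Your proposal contains no rearrangement argument and no mechanism to preserve the $\partial D_1$ boundary condition inside $\Har$, so the step $c_1\le c_{\rm MP,r}$ remains unproved.
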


	\begin{proof}
As in the proof of Lemma \ref{definition:2.5}, one sees 
$\tilde{c}_{\rm MP,r} = c_{\rm MP,r}$. 
Moreover, from the definition, $c_{\rm MP} \leq c_{\rm MP,r} \leq c_1$. 
In addition, by Propositions \ref{definition:3.2} and \ref{definition:3.3}, 
we know $ c_{\rm MP} \leq c_{\rm LES} \leq c_1$. 
Thus it is sufficient to prove $c_1 \leq c_{\rm MP}$.

	We first claim that 
	\begin{equation}\label{eq:32}
		c_{\rm MP} = d := \inf_{\eta \in \ov{\Gamma} } 
		\max_{ 0 \leq t \leq 1} 
		I(\eta(t)), \quad 
		\ov{\Gamma}
		:= \left\{ \eta \in C([0,1], \Ha) \ |\ 
		\eta(0) = 0, \ \eta(1) = \gamma_1(1)  \right\}
	\end{equation}
where $\gamma_1$ appears in Lemma \ref{definition:2.3}. 
From the definition of $\Gamma$ and $I(\gamma_1(1)) < 0$, 
we have $c_{\rm MP} \leq d$. 
For the opposite inequality $d \leq c_{\rm MP}$, 
it is enough to show that $[I<0]:=\{ u \in \Ha \ |\ I(u) < 0 \}$ 
is path-connected in $\Ha$. 
A similar claim is proved in \cite{HIT-10} for the case $\alpha = 1$ 
and we use the same argument.

		Let $u_1,u_2 \in [I<0]$. Since $C_0^\infty(\RN)$ is dense in $\Ha$, 
we may assume $u_1,u_2 \in C_0^\infty(\RN)$. 
For $u_i$, we consider the path $\gamma_i (t) = \gamma_{u_i} (t)$ 
appearing in Proposition \ref{definition:3.3}. 
From the computations in the proof of Proposition \ref{definition:3.3}, 
we observe that $\rd I(\gamma_i(t)) / \rd t >0$ if $0< t \ll 1$. 
Since $I(\gamma_i(0)) = 0 > I(\gamma_i(1)) =I(u_i)$, 
there are maximum points $t_i \in (0,1)$ of $I(\gamma_i(t))$ with 
$I(\gamma_i(t_i))>0$. 
At those points, we have $\rd I( \gamma_i(t) ) / \rd t |_{t=t_i} = 0$, 
which yields $P(\gamma_{i}(t_i)) = 0$. 
Hence, by Proposition \ref{definition:3.3}, we observe that 
$t \mapsto I(\gamma_i(t)): (t_i,\infty) \to \R$ is strictly decreasing and 
$I(\gamma_i(t)) \to - \infty$ as $t \to \infty$. 
Thus choose $t_0>1$ so large that 
	\[
		I(\gamma_i(t_0)) < -2  \max\left\{  \max_{0 \leq s \leq 1} 
		I( s u_1) , \ 
		\max_{0 \leq s \leq 1  } I( s u_2) \right\} < 0. 
	\]
Noting that $\gamma_i(t_0)(x) = u_i(x/t_0)$, $u_i \in C^\infty_0(\RN)$ and 
	\[
		\begin{aligned}
			\la u_1(x/t_0) , s u_2(x - R\mathbf{e}_1  ) \ra_\alpha 
			&\to 0, \\
			\int_{\RN} F( u_1(x/t_0) + s u_2( x-R\mathbf{e}_1 ) ) \rd x 
			&\to \int_{\RN} F(u_1/t_0) \rd x + \int_{\RN} F(s u_2(x)) \rd x
		\end{aligned}
	\]
uniformly with respect to $s \in [0,1]$ as $R \to \infty$ 
where $\mathbf{e}_1 = (1,0,\ldots, 0)$,
it follows from the choice of $t_0$ that as $R \to \infty$ 
	\[
		\max_{ 0 \leq s \leq 1} 
		I\left(\gamma_1(t_0) + s u_2 (x - R\mathbf{e}_1) \right)  
		\to I(\gamma_1(t_0)) + \max_{0 \leq s \leq 1} I(su_2) 
		< - \max_{0 \leq s \leq 1} I(s\gamma_2(t_0))  < 0.
	\]
Hence, choosing $R_0>0$ so large, we have 
	\[
		\begin{aligned}
			&{\rm supp}\, \gamma_1(t) \cap {\rm supp}\, u_2 ( \cdot - R_0 \mathbf{e}_1) 
			= \emptyset \quad {\rm for} \ 1 \leq t \leq t_0, \quad 
			\max_{0 \leq s \leq 1} 
			I\left(\gamma_1(t_0) + s u_2( \cdot - R_0\mathbf{e}_1) \right)  < 0,
			\\
			& I ( \gamma_1(t) + u_2 (\cdot - R_0 \mathbf{e}_1  ) )  
			= I(\gamma_1(t)) + I(u_2) < 0 \quad {\rm for} \ 1 \leq t \leq t_0.
		\end{aligned}
	\]

		Through the paths $t \mapsto \gamma_1(t)$ ($ t\in[1,t_0]$), 
$s \mapsto \gamma_1(t_0)(x) + s u_2 (x - R_0 \mathbf{e}_1)$ 
($s \in [0,1]$) and $\theta \mapsto \gamma_1(t_0 - \theta) (x) + u_2(x - R_0\mathbf{e}_1)$ 
($\theta \in [0,t_0-1]$), 
we can connect $u_1$ and $u_1(x) + u_2(x-R_0 \mathbf{e}_1) $ 
in $[ I < 0]$. 
In a similar fashion, we see that there is a path 
between $u_2$ and $u_1(x) + u_2(x-R_0 \mathbf{e}_1) $ 
in $[ I < 0]$. Therefore, $[I<0]$ is path-connected in $\Ha$ and 
$ c_{\rm MP} = d$ follows.

	Next, since $I(u)=I(-u)$, $\gamma_1(1) \in \Har$ and 
$\gamma(-t) = - \gamma(t)$ 
holds for any $\gamma \in \Gamma_1$, 
it suffices to prove 
	\begin{equation}\label{eq:33}
		d= \inf_{\eta \in \ov{\Gamma}_{\rm r } }
				\max_{ 0 \leq t \leq 1} 
				I(\eta(t))  (=c_1), \quad 
		\ov{\Gamma}_{\rm r}
				:= \left\{ \eta \in C([0,1], \Har) \ |\ 
				\eta(0) = 0, \ \eta(1) = \gamma_1(1)  \right\}.
	\end{equation}
By definition, one has $d \leq c_1$.

		On the other hand, let $\eta \in \ov{\Gamma}$. 
Since $F(s)$ is even, we have $\int_{\RN} F(u) \rd x = \int_{\RN} F(|u|) \rd x$ 
for each $u \in \Ha$. 
Moreover, noting the following inequality (see Lemma \ref{definition:A.3}) 
	\begin{equation}\label{eq:34}
		\| |u| \|_\alpha \leq \| u \|_\alpha \quad 
		{\rm for\ all} \ u \in \Ha, 
	\end{equation}
we observe that $\zeta(t)(x) := |\eta(t)(x)|$ satisfies 
	\[
		I(\zeta(t)) \leq I(\eta(t)) \quad {\rm for\ all} \ t \in [0,1], \quad 
		\zeta \in C([0,1], \Ha). 
	\]
Recalling Remark \ref{definition:2.4}, we have $\zeta \in \ov{\Gamma}$.

	Now set 
$\tilde{\zeta}(t)(x) := (\zeta(t))^\ast (x)$
where $u^\ast$ denotes the Schwarz symmetrization of $u$. 
By \cite[Theorem 9.2]{AL-89}, we have $\tilde{\zeta} \in C([0,1], \Har)$ 
and $\int_{\RN} F( \tilde{\zeta}(t) ) \rd x = \int_{\RN} F(\zeta (t)) \rd x$. 
Moreover, from Remark \ref{definition:2.4}, 
it follows that $(\gamma_1(1))^\ast = \gamma_1(1)$, 
which yields $\tilde{\zeta} \in \ov{\Gamma}_{\rm r}$. 
Since $\| \tilde{\zeta}(t) \|_\alpha \leq \| \zeta(t) \|_\alpha$ holds 
thanks to \cite[Proposition 4]{Se}, we see that 
	\[
		c_1 \leq \max_{0 \leq t \leq 1} I(\tilde{\zeta}(t)) \leq 
		\max_{0 \leq t \leq 1} I(\zeta(t)) \leq \max_{0 \leq t \leq 1} I(\eta(t)). 
	\] 
Since $\eta$ is any element of $\ov{\Gamma}$, one sees 
$c_1 \leq d$.

Therefore, from \eqref{eq:32} and \eqref{eq:33}, we get $c_{\rm MP} = d =c_1$ 
and this completes the proof. 
	\end{proof}

Finally we shall show that if either $\alpha > 1/2$ or 
$f(s)$ is locally Lipschitz, then 
every weak solution of \eqref{eq:4} satisfies $P(u) = 0$. 
To this end, we use the following Br\'ezis--Kato type result \cite{BK-79}:

	\begin{proposition}\label{definition:3.5}
		Assume that $u \in \Ha$ is a weak solution of 
		\begin{equation}\label{eq:35}
			(1-\Delta)^\alpha u - a(x) u = 0 \quad {\rm in} \ \RN
		\end{equation}
		where $a(x)$ satisfies 
		\begin{equation}\label{eq:36}
			|a(x)| \leq C_0 ( 1 + A(x)) \quad {\rm for\ a.e.} \ x \in \RN, \quad 
			A \in L^{N/(2\alpha)}(\RN).
		\end{equation}
		Then $u \in L^p(\RN)$ for all $p \in [2,\infty)$. 
	\end{proposition}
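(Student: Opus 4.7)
The plan is to adapt the classical Brezis--Kato/Moser iteration to the nonlocal operator $(1-\Delta)^\alpha$. For $L>0$ set $u_L:=\min(|u|,L)$, and for $\beta\ge 1$ consider the test function
\[
\varphi_{L,\beta}:= u\,u_L^{2(\beta-1)}\in H^\alpha(\RN).
\]
Admissibility of $\varphi_{L,\beta}$ follows from the fact that $u_L^{2(\beta-1)}$ is a bounded globally Lipschitz function of $u$, and multiplication by such functions preserves $H^\alpha(\RN)$ for $0<\alpha<1$, which is easiest to see using the Gagliardo seminorm equivalent to $\|\cdot\|_\alpha$.

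Using \eqref{eq:36}, for every $\delta>0$ decompose $a=a_1+a_2$ with $\|a_1\|_{L^{N/(2\alpha)}}\le\delta$ and $a_2\in L^\infty(\RN)$. Testing the weak formulation of \eqref{eq:35} against $\varphi_{L,\beta}$ and bounding the right-hand side via H\"older together with the fractional Sobolev embedding $\|w\|_{L^{2_\alpha^*}}\le S_\alpha\|w\|_\alpha$ gives
\[
\int_{\RN}a\,u\,\varphi_{L,\beta}\,\rd x \le \delta\,S_\alpha^2\,\|u\,u_L^{\beta-1}\|_\alpha^2+\|a_2\|_{L^\infty}\|u\,u_L^{\beta-1}\|_{L^2}^2.
\]
For the left-hand side I need a Stroock--Varopoulos type inequality
\[
\langle u,\varphi_{L,\beta}\rangle_\alpha\ge c_\beta\|u\,u_L^{\beta-1}\|_\alpha^2,
\]
with $c_\beta>0$ depending only on $\beta$. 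Once this is in hand, choosing $\delta$ so small that $\delta S_\alpha^2<c_\beta/2$ and absorbing yields, after letting $L\to\infty$ by monotone convergence, the implication $u\in L^{2\beta}(\RN)\Rightarrow u\in L^{\beta\cdot 2_\alpha^*}(\RN)$. Starting with $\beta=1$ (the base case $u\in L^2(\RN)$ is known from $u\in H^\alpha(\RN)$) and iterating with $\beta_{n+1}=\beta_n\cdot(2_\alpha^*/2)$, the exponents $2\beta_n$ tend to $+\infty$ since $2_\alpha^*/2>1$, covering every $p\in[2,\infty)$.

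The main technical obstacle is the Stroock--Varopoulos inequality for $(1-\Delta)^\alpha$. Unlike for the local Laplacian, no pointwise chain rule is available, so one must exploit either the Markovian character of the subordinated Bessel-type semigroup $e^{-t(1-\Delta)^\alpha}$ (which produces the inequality through the Beurling--Deny structure of the associated Dirichlet form) or, equivalently, the Caffarelli--Silvestre-type extension for $(1-\Delta)^\alpha$: the degenerate elliptic extension on $\ERN$ with weight $y^{1-2\alpha}$ and an additional zeroth-order term. At the extended level the inequality reduces to a pointwise convexity identity of the form $\nabla U\cdot\nabla(U\,U_L^{2(\beta-1)})\ge c_\beta|\nabla(U\,U_L^{\beta-1})|^2$ (plus the analogous inequality for the mass term), and tracing back to the boundary via the weighted normal derivative relation yields the desired inequality. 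A secondary nuisance is justifying the limit $L\to\infty$ and the admissibility of $\varphi_{L,\beta}$ for general $\beta$; the latter is handled by approximation of $u_L^{2(\beta-1)}$ by smooth bounded Lipschitz functions and passage to the limit in the resulting energy identity.
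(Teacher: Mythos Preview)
Your proposal is correct and follows the same general strategy as the paper: Moser iteration adapted to $(1-\Delta)^\alpha$ by means of the Caffarelli--Silvestre-type extension, combined with the absorption of the singular part of the potential via the Sobolev inequality. The paper's proof differs only in organization: rather than decomposing $a=a_1+a_2$ and testing \eqref{eq:35} directly with $u\,u_L^{2(\beta-1)}$, it first truncates the potential to $a_k\in L^\infty$, introduces approximating solutions $\psi_k$ of the regularized equation, and performs the iteration explicitly at the extension level (testing with $|w_{k,n}|^{p-2}w_{k,n}$, which yields the pointwise identity $\nabla w\cdot\nabla(|w|^{p-2}w)=\tfrac{4(p-1)}{p^2}|\nabla(|w|^{p/2})|^2$---exactly the chain-rule step you flag as the core of the Stroock--Varopoulos inequality). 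Your version is somewhat more economical, since it avoids the auxiliary sequence $\psi_k$ and the double limit $n,k\to\infty$; on the other hand, the paper's explicit use of the extension makes the ``Stroock--Varopoulos'' step entirely self-contained and avoids invoking the Markovian/Dirichlet-form structure of the semigroup as a black box. One minor imprecision: $u_L^{2(\beta-1)}$ is not Lipschitz in $u$ near $0$ when $1<\beta<3/2$, but your actual test function $s\mapsto s\,(\min(|s|,L))^{2(\beta-1)}$ \emph{is} globally Lipschitz with value $0$ at $0$, so the admissibility $\varphi_{L,\beta}\in H^\alpha(\RN)$ goes through as you intend.
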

	
We give a sketch of proof for Proposition \ref{definition:3.5} in Appendix. 
For related results, see \cite{FF-15}. 
Using Proposition \ref{definition:3.5}, we shall prove
	\begin{proposition}\label{definition:3.6}
		\emph{(i)} 
		Let $u \in \Ha$ be a weak solution of \eqref{eq:4}. 
		Then $u \in C^{\beta}_{\rm b}(\RN)$ for every $\beta \in (0,2\alpha)$ and 
		$u$ decays faster than any polynomial , i.e., 
		for any $k \in \N$ there exists a $c_k >0$ such that 
		$|u(x)| \leq c_k (1 + |x|)^{-k}$ for all $x \in \RN$. 
		Here 
			\[
				\begin{aligned}
					C^\beta_{\rm b}(\RN ) &:= \left\{ u \in C(\RN) \cap L^\infty(\RN) \ \Big|\ 
					[u]_{C^\beta} := 
					\sup_{x,y \in \RN, x \neq y} 
					\frac{|u(x)-u(y)|}{|x-y|^{\beta}} < \infty  \right\} 
					\quad {\rm if} \ \beta < 1,
					\\
					C^{1}_{\rm b}(\RN) &:= \{  u \in C^1(\RN) \ |\ u, \nabla u \in L^\infty(\RN) \}
					,
					\\
					C^{\beta}_{\rm b} (\RN) &:= \{ u \in C^1_{\rm b}(\RN) \ |\ 
					\nabla u \in C^{\beta - 1}_{\rm b}(\RN) \} 
					\quad {\rm if} \ 1 < \beta < 2.
				\end{aligned}
			\]

		\emph{(ii)} 
		In addition to \emph{(f1)--(f4)}, assume that $f(s)$ is locally Lipschitz continuous 
		and $0 < \alpha \leq 1/2$. 
		Then $ u \in C^{1+\beta}_{\rm b}(\RN)$ for all $\beta \in (0,2\alpha)$. 
		
		\emph{(iii)} 
		Assume that $u \in \Har \cap C^1_{\rm b}(\RN)$ is a weak solution of \eqref{eq:4}. 
		Then $P(u) = 0$. 
	\end{proposition}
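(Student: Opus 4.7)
The plan is to treat the three parts consecutively, with (i) and (ii) feeding regularity and decay into the Pohozaev argument of (iii).

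\textbf{Part (i).} I would first put \eqref{eq:4} in the form $(1-\Delta)^\alpha u = a(x) u$ with $a(x) := f(u(x))/u(x)$, extended by $0$ when $u(x)=0$. Assumptions (f2)--(f3) yield $|f(s)/s|\le C(1+|s|^{2_\alpha^\ast-2})$, so \eqref{eq:36} holds with $A(x) := |u(x)|^{2_\alpha^\ast-2}$, which belongs to $L^{N/(2\alpha)}(\RN)$ by Sobolev. Proposition \ref{definition:3.5} then delivers $u\in L^p(\RN)$ for every $p\in[2,\infty)$. From the Bessel representation $u = G_{2\alpha}\ast f(u)$, Young's inequality combined with the bound $|f(s)|\le C(|s|+|s|^{2_\alpha^\ast-1})$ gives $u\in L^\infty(\RN)$. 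Hölder regularity $u\in C^\beta_{\rm b}$ for $\beta\in(0,2\alpha)$ then follows from the standard mapping properties of $G_{2\alpha}$ applied to $f(u)\in L^\infty\cap L^p$. For the polynomial decay I iterate: the exponential decay of $G_{2\alpha}$ outside $B_1(0)$ recorded in \eqref{eq:9}, combined with $|f(s)|\le C|s|$ near $s=0$ (from (f2)) and a splitting of the convolution over $|y|\le|x|/2$ and $|y|\ge|x|/2$, upgrades $|u(x)|\lesssim (1+|x|)^{-k}$ to $|u(x)|\lesssim (1+|x|)^{-(k+1)}$.

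\textbf{Part (ii).} With $f$ locally Lipschitz, $f(u)$ inherits the Hölder regularity of $u$. A Schauder-type bootstrap on $u=G_{2\alpha}\ast f(u)$ then upgrades the Hölder exponent by $2\alpha$ at each pass (avoiding integer exponents in the usual way). Starting from the $C^\beta_{\rm b}$-regularity of (i) with $\beta<2\alpha$, finitely many iterations push the exponent past $1$ and produce $u\in C^{1+\beta}_{\rm b}$ for every $\beta\in(0,2\alpha)$.

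\textbf{Part (iii).} I would derive the Pohozaev identity from the scaling identity \eqref{eq:17}: that formula gives $P(u) = D_\theta\tilde I(0,u) = \left.\frac{d}{d\theta}I(u_\theta)\right|_{\theta=0}$ with $u_\theta(x) := u(e^{-\theta}x)$. On the other hand, if the scaling map $\theta\mapsto u_\theta$ is Fréchet differentiable into $\Ha$ at $\theta=0$ with derivative $-x\cdot\nabla u$, then the chain rule identifies the same derivative with $I'(u)\cdot(-x\cdot\nabla u)=0$, since $u$ is a weak solution. Two ingredients are needed: (a) $x\cdot\nabla u \in \Ha$, obtained from $u\in C^1_{\rm b}$ together with the fast decay of $u$ coming from (i) and an analogous decay for $\nabla u$ using the representation $\nabla u = \nabla G_{2\alpha}\ast f(u)$ and the exponential decay of $\nabla G_{2\alpha}$ outside $B_1(0)$; (b) Fréchet differentiability in $\Ha$ at $\theta = 0$, verified on the Fourier side via $\widehat{u_\theta}(\xi)=e^{N\theta}\hat u(e^\theta\xi)$ and dominated convergence, the dominating function being controlled by $(1+|\xi|^2)^\alpha|\hat u|^2$. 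The main obstacle will be (b): the $(\theta,\xi)$-coupled weight $(1+4\pi^2e^{-2\theta}|\xi|^2)^\alpha$ in \eqref{eq:16} must be expanded at $\theta=0$ and the $\Ha$-remainder controlled by a mean-value argument in $\theta$, rather than merely a weak or pointwise limit. Once the chain rule is justified, the conclusion $P(u)=0$ is immediate.
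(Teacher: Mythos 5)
The regularity portion of (i) and the Schauder bootstrap in (ii) reproduce the paper's argument. The decay step in (i) and the whole of (iii) deviate, and both contain genuine gaps.

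\textbf{Decay in (i).}
The proposed iteration does not actually improve the decay rate. Split $u(x)=\int G_{2\alpha}(y)f(u(x-y))\,\rd y$ over $|y|\le|x|/2$ and $|y|\ge|x|/2$. On the inner region $|x-y|\ge|x|/2$, so $|u(x-y)|\le c_k(1+|x|/2)^{-k}$ and, for $|x|$ large, $|f(u(x-y))|\le C|u(x-y)|$; since $\|G_{2\alpha}\|_{L^1}=1$, this contribution is only $\le C c_k 2^k(1+|x|)^{-k}$ --- the same power $-k$, not $-(k+1)$. Nor is there a contraction to exploit: (f2) controls $\limsup_{s\to 0} f(s)/s$ but allows $\liminf_{s\to0}f(s)/s$ to be any finite number, so the constant $C$ in $|f(s)|\le C|s|$ need not be below $1$. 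The paper's proof avoids this by changing the operator: it passes to $(1-\Delta)^\alpha-(1-\delta_0)$, whose resolvent kernel $K$ is analyzed in Proposition~\ref{definition:A.1}, observes that the source term $\left(f(u)-(1-\delta_0)u\right)_+$ vanishes where $|u|$ is small (by \eqref{eq:10} together with $u(x)\to0$) and hence is compactly supported, and then invokes the comparison principle Proposition~\ref{definition:A.1}~(iii) to dominate $u$ (and $-u$) by solutions with compactly supported data. Without this change of operator the pure fixed-point iteration does not close.

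\textbf{Part (iii).}
Your route --- $P(u)=D_\theta\tilde I(0,u)=\tfrac{d}{d\theta}I(u_\theta)\big|_{\theta=0}=I'(u)[-x\cdot\nabla u]=0$ --- is conceptually reasonable but rests on two facts you do not establish. First, you need $x\cdot\nabla u\in\Ha$, in particular $x\cdot\nabla u\in L^2(\RN)$, and this requires decay of $\nabla u$; that does not follow from the hypothesis $u\in C^1_{\rm b}(\RN)$ alone, and the representation $\nabla u=(\nabla G_{2\alpha})\ast f(u)$ you propose to use is unavailable when $\alpha\le 1/2$, since $\nabla G_{2\alpha}\sim|x|^{2\alpha-N-1}$ is not locally integrable near the origin. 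Second, the Fr\'echet differentiability of $\theta\mapsto u_\theta\in\Ha$ at $\theta=0$ is asserted but not controlled: the difference quotient of $\wh{u_\theta}(\xi)=e^{N\theta}\hu(e^\theta\xi)$ involves $\xi\cdot\nabla_\xi\hu$, which is not dominated in any obvious way by $(1+4\pi^2|\xi|^2)^\alpha|\hu|^2$, so the dominating function you invoke does not exist as written. The paper instead mollifies, tests the weak equation with $x\cdot\nabla u_\e$ where $u_\e=u\ast\rho_\e\in\scrS(\RN,\R)$, applies the commutation identity $(1-\Delta)^\alpha(x\cdot\nabla u_\e)=x\cdot\nabla[(1-\Delta)^\alpha u_\e]+2\alpha(1-\Delta)^\alpha u_\e-2\alpha(1-\Delta)^{\alpha-1}u_\e$ from \cite{FV-15}, integrates by parts in $x$, and passes to the limit $\e\to0$ using only the boundedness of $\nabla u_\e$ (from $u\in C^1_{\rm b}$) and the polynomial decay of $u$ --- no decay of $\nabla u$ and no Fr\'echet differentiability of the scaling map are needed. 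That is the step your sketch is missing.
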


A proof of (iii) below is essentially due to \cite{FV-15}. See also \cite{Se}.

	\begin{proof}
(i) Let $u \in \Ha$ be a weak solution of \eqref{eq:4} and 
set $a(x) := f(u(x))/u(x)$. From (f2) and (f3), it follows that 
	\[
		|a(x)| \leq C( 1 + |u(x)|^{2^\ast_\alpha - 2} ) \quad 
		{\rm for \ a.e.} \ x \in \RN. 
	\]
Noting that $u$ is a weak solution of $(1-\Delta)^\alpha u - a(x) u = 0$ in $\RN$, 
Proposition \ref{definition:3.5} yields $u \in L^p(\RN)$ for all $p \in [2,\infty)$. 
Using (f2) and (f3) again, we observe that $f(u(x)) \in L^p(\RN)$ 
for any $p \in [2,\infty)$. Thus, recalling the argument in Lemma \ref{definition:2.1}, 
we have 
	\[
		u = G_{2\alpha} \ast f(u), \quad 
		u \in \scrL^p_{2\alpha} := \left\{ 
		G_{2\alpha} \ast g \ |\ g \in L^p(\RN)  \right\} 
		\quad {\rm for\ all} \ p \in [2,\infty). 
	\]
Since $\scrL^p_{2\alpha} \subset W^{2\alpha,p}(\RN)$ 
(see \cite{St}), Sobolev's inequality yields 
$u \in C^{\beta}_{\rm b}(\RN)$ for each $\beta \in (0,2\alpha)$. 
Hence, the first assertion in (i) holds.

	For the decay estimate, let $\delta_0>0$ be a constant 
appearing in \eqref{eq:10} and $v \in \Ha$ a unique weak solution of 
	\[
		(1 - \Delta )^\alpha v - (1-\delta_0) v = \left( f(u) - (1-\delta_0) u  \right)_+
		=:g(x)
		\quad {\rm in}\ \RN.
	\]
Since $u \in C^\beta_{\rm b}(\RN)$, we have $u(x) \to 0$ as $|x| \to \infty$ and 
$g(x) \in L^\infty(\RN)$ has compact support. 
Hence, by Proposition \ref{definition:A.1} (ii), for any $k \in \N$, $v$ satisfies 
$v(x) \leq c_k ( 1 + |x|)^{-k}$ for $x \in \RN$. Moreover, 
rewriting \eqref{eq:4} as 
	\[
		(1-\Delta)^\alpha u - (1-\delta_0) u = f(u) - (1-\delta_0) u \quad 
		{\rm in} \ \RN,
	\]
Proposition \ref{definition:A.1} (iii) asserts 
$u(x) \leq v(x) \leq c_k (1+|x|)^{-k}$ for all $x \in \RN$.

	On the other hand, let $w$ be a solution of 
	\[
		(1-\Delta )^\alpha w - (1-\delta_0) w = \left( f(u) - (1-\delta_0) u\right)_-
		\quad {\rm in} \ \RN.
	\]
Since 
	\[
		(1-\Delta)^\alpha (-u) - (1-\delta_0) (-u) 
		= - ( f(u) - (1-\delta_0) u  ) \leq (1-\Delta )^\alpha w - (1-\delta_0) w
		\quad {\rm in} \ \RN,
	\]
using Proposition \ref{definition:A.1} again, we get 
$-u(x) \leq w(x) \leq c_k (1+|x|)^{-k}$ for $x \in \RN$. 
Thus (i) holds.

(ii) Let $f(s)$ be locally Lipschitz and $0 < \alpha \leq 1/2$. 
Since $u \in C^\beta_{\rm b}(\RN)$ for $0 < \beta < 2\alpha$ by (i), 
we have 
	\[
		|f(u(x_1)) - f(u(x_2))| \leq L |u(x_1) - u(x_2) | \leq L [u]_{C^\beta} |x_1-x_2|^\beta.
	\]
Thus $f(u(x)) \in C^{\beta}_{\rm b}(\RN)$. 
Since $u = G_{2 \alpha} \ast f(u)$ and 
$C^{\beta + 2\alpha}_{\rm b}(\RN) = G_{2\alpha} \ast C^{\beta}_{\rm b}(\RN)$ 
holds by \cite[Theorem 4 in $\S$5 of Chapter V]{St}, 
applying the bootstrap argument, we can check that 
$u \in C^{\beta}_{\rm b}(\RN)$ for all $\beta < 1 + 2 \alpha$.

	(iii) We follow the argument in \cite{FV-15,Se}. 
Let $u \in \Ha \cap C^1_{\rm b}(\RN)$ be a weak solution of \eqref{eq:4}. 
For a mollifier $(\rho_\e)$, set $u_\e(x) := u \ast \rho_\e$. 
Thanks to the decay estimate of $u$, we observe that $ u_\e \in \scrS(\RN,\R)$. 
Thus, 
	\begin{equation}\label{eq:37}
		\la u, x \cdot \nabla u_\e \ra_\alpha = \int_{\RN } f(u) 
		x \cdot \nabla u_\e \rd x. 
	\end{equation}
Noting 
	\[
		\begin{aligned}
			&\la u , x \cdot \nabla u_\e \ra_\alpha 
			= \int_{\RN} \wh{u} (\xi) \ov{ (1+4\pi^2|\xi|^2)^\alpha \wh{x\cdot \nabla u_\e} } \rd \xi 
			= \int_{\RN} u(x) (1-\Delta )^\alpha (x\cdot \nabla u_\e) \rd x,
			\\
			& (1-\Delta)^\alpha u_\e 
			= (1-\Delta)^\alpha( \rho_\e \ast u ) 
			= \rho_\e \ast (1-\Delta)^\alpha u 
			= \rho_\e \ast f(u),
		\end{aligned}
	\]
and using (\cite[Proposition 5.1]{FV-15})
	\[
		(1-\Delta)^\alpha (x \cdot \nabla u_\e )
		= x \cdot \nabla [ (1-\Delta)^\alpha u_\e ] 
		+ 2 \alpha (1 -\Delta)^\alpha u_\e 
		- 2\alpha (1 - \Delta)^{\alpha - 1} u_\e,
	\]
we obtain 
	\begin{equation}\label{eq:38}
		\begin{aligned}
			\la u , x\cdot \nabla  u_\e \ra_\alpha 
			&= \int_{\RN} u(x) (1-\Delta)^\alpha (x \cdot \nabla u_\e) 
			\\
			&= \int_{\RN} 
			u(x) \left[  x \cdot \nabla ( \rho_\e \ast f(u)  ) 
			+ 2\alpha \rho_\e \ast f(u) - 2\alpha (1-\Delta)^{\alpha-1} u_\e \right] 
			\rd x.
		\end{aligned}
	\end{equation}
By $u_\e \to u$ strongly in $L^2(\RN)$ and $0 < \alpha < 1$, 
it is easily seen that 
	\begin{equation}\label{eq:39}
		\begin{aligned}
			&\lim_{\e \to 0} \int_{\RN} u \rho_\e \ast f(u) \rd x 
			= \lim_{\e \to 0} \int_{\RN} (u \ast \rho_\e) f(u) \rd x 
			= \int_{\RN} u f(u) \rd x = \| u \|_\alpha^2,
			\\
			&\lim_{\e \to 0} \int_{\RN} (1-\Delta)^{\alpha-1} u_\e u \rd x 
			= \lim_{\e \to 0} \int_{\RN} (1 + 4\pi^2|\xi|^2)^{\alpha-1} 
			\wh{u_\e} \ov{\wh{u}} \rd \xi 
			= \int_{\RN} (1+4\pi^2|\xi|^2)^{\alpha-1} |\wh{u}|^2 \rd \xi.
		\end{aligned}
	\end{equation}

	On the other hand, recalling $u \in C^1_{\rm b}$, we have 
	\[
		\begin{aligned}
			\int_{\RN} u x \cdot \nabla ( \rho_\e \ast f(u)) \rd x 
			&= \sum_{i=1}^N \int_{\RN} u x_i \partial_{x_i} ( \rho_\e \ast f(u) ) \rd x 
			= - \sum_{i=1}^N \int_{\RN} \partial_{x_i} ( u x_i) \rho_\e \ast f(u) \rd x
			\\
			&= - N \int_{\RN } u \rho_\e \ast f(u) \rd x 
			- \sum_{i=1}^N \int_{\RN} 
			x_i (\partial_{x_i} u) \rho_\e \ast f(u) \rd x. 
		\end{aligned}
	\]
From the decay estimate of $u$, the same estimate holds for 
$\rho_\e \ast f(u)$ uniformly with respect to $\e$. Thus, 
letting $\e \to 0$ in the above equality, the dominated convergence 
theorem and \eqref{eq:4} give us 
	\begin{equation}\label{eq:40}
		\begin{aligned}
			\lim_{\e \to 0} \int_{\RN} u x \cdot \nabla ( \rho_\e \ast f(u)) \rd x 
			&= - N \int_{\RN} u f(u) \rd x 
			- \sum_{i=1}^{N} \int_{\RN} x_i (\partial_{x_i} u) f(u) \rd x 
			\\
			&= - N \| u \|_\alpha^2 - \sum_{i=1}^N \int_{\RN} 
			x_i \partial_{x_i} F(u) \rd x 
			\\
			&= - N \| u\|_\alpha^2 + N \int_{\RN} F(u) \rd x. 
		\end{aligned}
	\end{equation}

	Finally, since $ \nabla u_\e(x) \to \nabla u(x)$ in $L^\infty_{\rm loc}(\RN)$ 
and $(\nabla u_\e)$ is bounded in $L^\infty(\RN)$ 
due to $u \in C^1_{\rm b}(\RN)$, 
one sees 
	\begin{equation}\label{eq:41}
		\lim_{\e \to 0} \int_{\RN} f(u) x \cdot \nabla u_\e \rd x 
		= \int_{\RN} f(u) x \cdot \nabla u \rd x 
		= \int_{\RN} x \cdot \nabla F(u) \rd x 
		= - N \int_{\RN} F(u) \rd x. 
	\end{equation}
Therefore, collecting \eqref{eq:37}--\eqref{eq:41}, we obtain 
	\[
		0 = (N-2\alpha) \| u \|_\alpha^2 - 2N \int_{\RN} F(u) \rd x 
		+ 2 \alpha \int_{\RN} (1 + 4\pi^2 |\xi|^2)^{\alpha-1} 
		|\wh{u}|^2 \rd \xi = 2 P(u).
	\]
Thus we complete the proof. 
	\end{proof}

	\begin{proof}[Proof of Theorem \ref{definition:1.1}]
From Propositions \ref{definition:3.2}, 
\ref{definition:3.3},  \ref{definition:3.6} and Lemmas \ref{definition:2.5} and 
\ref{definition:3.4}, the only task is to show that 
there is a positive solution $u_1$ of \eqref{eq:4} which corresponds 
to the value $c_1$.

	First,  select $(\gamma_n) \subset \Gamma_{\rm r}$ so that 
$\max_{0 \leq t \leq 1} I(\gamma_n(t)) \to c_{\rm MP,r}$. 
As in the proof of Lemma \ref{definition:3.4}, setting $\eta_n(t)(x) := |\gamma_n(t)|(x)$, 
by Lemma \ref{definition:A.3}, we observe that 
$I(\eta_n(t)) \leq I(\gamma_n(t))$ for every $t \in [0,1]$ and 
$\eta_n \in \Gamma_{\rm r}$. Hence, it follows from Lemma \ref{definition:3.4} that 
	\[
		\max_{0 \leq t \leq 1} \tilde{I}(0,\eta_n(t)) \to \tilde{c}_{\rm MP,r} = c_1. 
	\]
Thus as in the proof of Proposition \ref{definition:3.2}, 
applying the Eklenad's variational principle to $\tilde{I}$ and $(\eta_n)$, 
and noting $\eta_n(t)(x) \geq 0$, 
we may find $(\theta_n)$ and $(u_n) \subset \Har$ so that 
	\[
		I(u_n) \to c_1, \quad D_{(\theta,u)} \tilde{I}(\theta_n,u_n) \to 0, \quad 
		\theta_n \to 0, \quad 
		(u_n)_-(x) := \max \{ 0, - u(x) \} \to 0 \quad {\rm in} \ \Ha. 
	\]
Repeating the argument of Proposition \ref{definition:3.2}, 
$u_n \to u_0$ strongly in $\Ha$, $I(u_0) = c_{1}$, $u_0 \geq 0, \not\equiv 0$. 
Using Propositions \ref{definition:3.5} and \ref{definition:3.6}, 
we have $u \in C^\beta_{\rm b}(\RN)$ for any $\beta \in (0,2\alpha)$. 
Finally, a weak Harnack inequality (\cite[Proposition 2]{FF-15}) 
gives $u_{1,0} > 0$ in $\RN$ and we complete the proof. 
\end{proof}

% % % % % % % % % % % % % % % % % % % % % % % % % % % % % % % % % % %
% % % % % % % % % % % % % % % % % % % % % % % % % % % % % % % % % % %
% % % % % % % % % % % % % % % % % % % % % % % % % % % % % % % % % % %

\section{Proof of Theorem \ref{definition:1.3}}
\label{section:4}

% % % % % % % % % % % % % % % % % % % % % % % % % % % % % % % % % % %
% % % % % % % % % % % % % % % % % % % % % % % % % % % % % % % % % % %
% % % % % % % % % % % % % % % % % % % % % % % % % % % % % % % % % % %

	In this section, we shall prove Theorem \ref{definition:1.3} using 
Theorem \ref{definition:1.1}.  
Until the proof of Theorem \ref{definition:1.3}, we consider the more general setting. 
Indeed, we first assume that $f(x,s)$ in \eqref{eq:1} satisfies the following:
	\begin{enumerate}
		\item[(G1)] 
		$f \in C(\RN \times \R, \R)$ and $f(x,-s) = - f(x,s)$ for each $x \in \RN$ and 
		$s \in \R$. 
		\item[(G2)] 
			\[
				-\infty < \liminf_{s \to 0} \inf_{x \in \RN} \frac{f(x,s)}{s} 
				\leq \limsup_{s \to 0} \sup_{x \in \RN} \frac{f(x,s)}{s} < 1. 
			\]
		\item[(G3)] 
			\[
				\lim_{|s| \to \infty} 
				\sup_{x \in \RN} \frac{|f(x,s)|}{|s|^{2_\alpha^\ast-1}} = 0.
			\]
		\item[(G4)] 
			There exists an $s_0>0$ such that 
				\[
					\inf_{x \in \RN} 
					\left( F(x,s_0) - \frac{1}{2}s_0^2 \right) > 0.
				\]
		\item[(G5)] 
			There exists a $f_\infty(s) \in C(\R,\R)$ such that 
			$f(x,s) \to f_\infty(s)$ in $L^\infty_{\rm loc}(\R)$ as $|x| \to \infty$. 
	\end{enumerate}
Note that (G1)--(G3) and (G5) are weaker than 
(F1)--(F5). Moreover, we remark that (F5) implies (G4). 
Indeed, by the inequalities in (F5), we have 
	\[
		G(x,s) \geq 
		\left( \frac{s}{s_1} \right)^\mu G(x,s_1) \geq c_1 s^{\mu}
		\quad {\rm for\ all} \ (x,s) \in \RN\times [s_1,\infty)
	\]
for some $c_1>0$. 
Since $\mu>2$ and $V \in L^\infty(\RN)$, we can easily find an 
$s_0>0$ such that 
	\[
		F(x,s_0) - \frac{1}{2}s_0^2 
		= G(x,s_0) - \frac{V(x)+1}{2} s_0^2 \geq c_1 s_0^\mu 
		- \frac{V(x) + 1}{2} s_0^2> 0. 
	\]
Thus (G4) is derived from (F5).

	Under (G1)--(G5), we define the functional $J$ corresponding to \eqref{eq:1}:
	\[
		J(u) := \frac{1}{2} \| u \|_\alpha^2 
		- \int_{\RN} F(x,u(x)) \rd x \in C^1(\Ha, \R).
	\]
Remark that 
a critical point of $J$ is equivalent to a solution of \eqref{eq:1}. 
We begin with showing that $J$ has the mountain pass geometry:
	\begin{proposition}\label{definition:4.1}
		Under \emph{(G1)--(G5)}, there exist $\rho_0>0$ and $u_1 \in \Ha$ 
		such that 
			\[
				\inf_{\| u \|_\alpha = \rho_0} J(u) > 0, \quad 
				J(u) \geq 0 \quad {\rm if}\ \| u \|_\alpha \leq \rho_0, \quad 
				J(u_1) < 0. 
			\]
	\end{proposition}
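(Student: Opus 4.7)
The plan is to verify the two standard ingredients of the mountain pass geometry for $J$. The first item (local positivity near $0$) will be extracted from the subcritical growth conditions \emph{(G2)--(G3)}, while the existence of $u_1$ will be obtained by the same scaling path construction $\gamma(t)(x):=\pi(x/t)$ used in Lemma \ref{definition:2.3}~(iii), now combined with \emph{(G4)--(G5)} to cope with the $x$-dependence of $f$.

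First I would establish a uniform pointwise bound $F(x,s) \leq \tfrac{1-2\delta_0}{2}s^2 + C|s|^{2_\alpha^\ast}$ valid on $\RN\times\R$. The coefficient in front of $s^2$ comes from \emph{(G2)} (which provides $\delta_0,s_1>0$ with $f(x,s)/s\leq 1-2\delta_0$ for $|s|\leq s_1$ uniformly in $x$), the coefficient in front of $|s|^{2_\alpha^\ast}$ from \emph{(G3)}, and the two bounds are joined on the intermediate range of $s$ using the continuity of $f$ together with the uniform bound $\sup_{x\in\RN}|f(x,s)|<\infty$ for $s$ in any compact set, which follows from \emph{(G5)}. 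Plugging this into $J(u)$ and using $\|u\|_{L^2}\leq\|u\|_\alpha$ together with the Sobolev embedding $\Ha\hookrightarrow L^{2_\alpha^\ast}(\RN)$ yields
\[
J(u)\;\geq\; \delta_0\|u\|_\alpha^2 - C'\|u\|_\alpha^{2_\alpha^\ast},
\]
so the first two assertions follow at once by choosing $\rho_0$ small, thanks to $2_\alpha^\ast>2$.

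For $u_1$, I would pass to the asymptotic nonlinearity $f_\infty$ supplied by \emph{(G5)}. Since $F(x,s_0)\to F_\infty(s_0)$ as $|x|\to\infty$, condition \emph{(G4)} gives $F_\infty(s_0)-s_0^2/2\geq\inf_{x}(F(x,s_0)-s_0^2/2)>0$, so the construction at the end of the proof of Lemma \ref{definition:2.3}~(iii) (cf.\ Remark \ref{definition:2.4}) produces a radial piecewise linear $\pi\in H^1_{\rm r}(\RN)$ satisfying $\int_\RN (F_\infty(\pi)-\pi^2/2)\,\rd y>0$; concretely, $\pi\equiv s_0$ on a large ball $B_R$ and linearly interpolated to zero on $B_{R+1}\setminus B_R$ works for $R$ large. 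Setting $\gamma(t)(x):=\pi(x/t)$ and using $\widehat{\gamma(t)}(\xi)=t^N\hat\pi(t\xi)$ with $(1+s)^\alpha\leq 1+s^\alpha$ for $s\geq 0$, the same computation as in Lemma \ref{definition:2.3}~(iii), after a change of variables in the nonlinear term, gives
\[
J(\gamma(t))\;\leq\;\frac{t^{N-2\alpha}}{2}\int_\RN (4\pi^2|\eta|^2)^\alpha|\hat\pi|^2\,\rd\eta \;-\; t^N\int_\RN\Bigl(F(ty,\pi(y))-\tfrac{1}{2}\pi(y)^2\Bigr)\rd y.
\]
Taking $u_1:=\gamma(t_1)$ for $t_1$ large then yields $J(u_1)<0$, provided the last integral is bounded below by a positive constant for large $t$.

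The main technical point is precisely this last claim: one must pass $t\to\infty$ inside $\int F(ty,\pi(y))\,\rd y$ and conclude it converges to $\int F_\infty(\pi(y))\,\rd y>0$. Pointwise convergence $F(ty,\pi(y))\to F_\infty(\pi(y))$ holds for every $y\neq 0$ since $|ty|\to\infty$ and \emph{(G5)} provides $L^\infty_{\rm loc}$-convergence in the $s$-variable, while the uniform growth bound from the first step, combined with $\pi\in L^2\cap L^{2_\alpha^\ast}$, furnishes a $t$-independent integrable majorant and allows Lebesgue's dominated convergence theorem to be applied. Coordinating \emph{(G2)}, \emph{(G3)}, and \emph{(G5)} to obtain this majorant and the uniform-in-$x$ control of $f$ on the intermediate range of $s$ is the only delicate ingredient of the argument.
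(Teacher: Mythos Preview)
Your argument is correct. The first part (the local positivity near the origin) is essentially identical to the paper's, and you are right to be explicit about invoking \emph{(G5)} to obtain the uniform bound $\sup_{x}|f(x,s)|<\infty$ on compact $s$-intervals; the paper uses this implicitly when writing $\bigl(F(x,s)-\tfrac{1-\delta_0}{2}s^2\bigr)_+\le C|s|^{2_\alpha^\ast}$.

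For the construction of $u_1$, however, the paper takes a shorter route. It uses the very same piecewise-linear bump $\pi$ (equal to $s_0$ on $B_R$, linear on $B_{R+1}\setminus B_R$) directly as $u_1$, without any additional scaling and without passing through $f_\infty$. The key observation is that \emph{(G4)} already gives a \emph{uniform} lower bound $F(x,s_0)-s_0^2/2\ge c_0>0$ for all $x$, so $\int_{B_R}\bigl(F(x,u_1)-u_1^2/2\bigr)\rd x\ge c_0|B_R|\sim R^N$ with no limit argument. Combined with the elementary bound $\|u_1\|_\alpha\le\|u_1\|_{H^1}$ and $\|\nabla u_1\|_{L^2}^2=O(R^{N-1})$, this yields $J(u_1)<0$ for $R$ large. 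Your detour through $f_\infty$, the scaling $\gamma(t)(x)=\pi(x/t)$, and dominated convergence works (and is in fact trivial here since $\pi$ is bounded with compact support, so the majorant issue you flag as ``delicate'' is immediate), but it is unnecessary: the uniform bound in \emph{(G4)} lets one avoid \emph{(G5)} and the limit $t\to\infty$ entirely in this step.
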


	\begin{proof}
By (G2) and (G3), we find $\delta_0>0$ and $s_1>0$ such that  
	\[
		\sup_{x \in \RN} \frac{f(x,s)}{s} 
		\leq 1 - 2 \delta_0 \quad {\rm for\ all} \ |s| \leq s_1. 
	\]
Hence, 
	\[
		\sup_{x \in \RN}  F(x,s) \leq \frac{1-\delta_0}{2}s^2 
		\quad {\rm for\ every}\ |s| \leq s_1. 
	\]
Combining this with (G3), we have 
	\[
		\left( F(x,s) - \frac{1-\delta_0}{2} s^2 \right)_+ \leq C |s|^{2^\ast_\alpha} 
		\quad {\rm for \ each} \ x \in \RN, \ s \in \R. 
	\]
Hence, Sobolev's inequality yields 
	\[
		\begin{aligned}
			J(u) &= \frac{1}{2} \| u \|_\alpha^2 - \frac{1-\delta_0}{2} \| u \|_{L^2}^2 
			- \int_{\RN} \left( F(x,s) - \frac{1-\delta_0}{2} u^2 \right) \rd x
			\\
			& \geq \frac{\delta_0}{2} \| u \|_\alpha^2 
			- \int_{\RN}  \left( F(x,s) - \frac{1-\delta_0}{2} u^2 \right)_+ \rd x 
			\geq \frac{\delta_0}{2} \| u \|_\alpha^2 
			- C \| u \|_\alpha^{2^\alpha_\ast}.
		\end{aligned}
	\]
Choosing $\rho_0>0$ sufficiently small, we have 
	\[
		\inf_{\| u \|_\alpha = \rho_0} J(u) > 0, \quad 
		J(u) \geq 0 \quad {\rm if}\ \| u \|_\alpha \leq \rho_0. 
	\]

	For the existence of $u_1$, 
let us consider a function defined by 
	\[
		u_1(x) := \left\{ \begin{aligned}
			& s_0 & &{\rm if}\ |x| \leq R,\\
			& -s_0(|x|-R) + s_0 & &{\rm if}\ R < |x| \leq R+1,\\
			& 0 & &{\rm if} \ |x| > R +1
		\end{aligned}  \right.
	\]
where $s_0>0$ appears in (G4). Notice that $ u_1 \in H^1(\RN)$. 
By $\| u \|_\alpha \leq \| u \|_{H^1}$ for all $u \in H^1(\RN)$, we obtain 
	\[
		J(u_1) \leq \frac{1}{2}\|u_1 \|_{H^1}^2 - \int_{\RN} F(x,u_1) \rd x
		= \frac{1}{2} \| \nabla u_1 \|_{L^2}^2 
		- \int_{\RN} F(x,u_1) - \frac{1}{2} u^2_1 \rd x.
	\]
Since it is easy to check $\| \nabla u_1 \|_{L^2}^2 =O(R^{N-2})$ 
and $\int_{\RN} F(x,u_1) - \frac{1}{2} u_1^2 \rd x \geq c R^N + O(R^{N-1})$ as 
$R \to \infty$ for some $c>0$, for sufficiently large $R>0$, one finds that 
	\[
		J(u_1) \leq \frac{1}{2} \| \nabla u_1 \|_{L^2}^2 
				- \int_{\RN} F(x,u_1) - \frac{1}{2} u^2_1 \rd x < 0. 
	\]
Thus we complete the proof. 
	\end{proof}

	By Proposition \ref{definition:4.1}, we define the mountain pass value of $J$: 
	\[
		d_{\rm MP} := \inf_{\gamma \in \Gamma_J} \max_{0 \leq t \leq 1} 
		J(\gamma(t)) > 0, \quad 
		\Gamma_{J} := \left\{ \gamma \in C([0,1], \Ha) \ |\ 
		\gamma(0)=0, \ J(\gamma(1)) < 0 \right\}.
	\]
As in the proof of Theorem \ref{definition:1.1}, 
applying Ekeland's variational principle to $J$ and 
a sequence of paths $(\gamma_n(t)) \subset \Gamma_J$ 
where $\gamma_n(t) (x) \geq 0$ and $\max_{0 \leq t \leq 1} J(\gamma_n(t)) \to d_{\rm MP}$, 
we may find a Palais--Smale sequence $(v_n)$ of $J$ at level $d_{\rm MP}$: 
	\begin{equation}\label{eq:42}
			J(v_n) \to d_{\rm MP}, \quad 
			J(v_n) \to 0 \quad {\rm strongly \ in\ } \ (\Ha)^\ast, \quad 
			(v_n)_- \to 0 \quad 
			{\rm strongly \ in} \ \Ha. 
	\end{equation}

	We first observe the behaviors of bounded Palais--Smale sequneces 
of $J$ under (G1)--(G5). For this purpose, consider 
	\begin{equation}\label{eq:43}
		(1 - \Delta )^\alpha \omega = f_\infty(\omega) \quad {\rm in}\ \RN, \quad 
		\omega \in \Ha
	\end{equation}
and define the functional $J_\infty$ corresponding to \eqref{eq:43} by 
	\[
		J_\infty (\omega) := \frac{1}{2} \| \omega \|_\alpha^2 
		- \int_{\RN} F_\infty ( \omega) \rd x
	\]
where $F_\infty(s) := \int_0^s f_\infty(t) \rd t$. 
Notice that $f_\infty$ satisfies (f1)--(f4) thanks to (G1)--(G5).  
Hence, $J_\infty \in C^1(\Ha,\R)$ and critical points of $J$ 
are solutions of \eqref{eq:43}. 

	\begin{proposition}\label{definition:4.2}
		Suppose that \emph{(G1)--(G5)} hold and 
		that every weak solution of \eqref{eq:43} satisfies 
		the Pohozaev identity $P_\infty(u) = 0$ where 
			\[
				P_\infty (u) := \frac{N-2\alpha}{2} \| u \|_\alpha^2 
				- N \int_{\RN} F_\infty (u) \rd x 
				+ \alpha \int_{\RN} (1 + 4 \pi^2 |\xi|^2)^{\alpha-1} |\wh{u}|^2 \rd \xi. 
			\]
		Let $(u_n)$ be a bounded Palais--Smale sequence of $J$, that is, 
		$(u_n) \subset \Ha$ is bounded and satisfies 
			\[
				J(u_n) \to c \in \R, \quad 
				J'(u_n) \to 0 \quad {\rm strongly \ in}\ (\Ha)^\ast.
			\]
		Then there exist $\ell \in \N$, $u_0 \in \Ha$, 
		$\omega_i \in \Ha$ and $(y_{n,i})_{n=1}^\infty$ 
		for $i=1,\ldots, \ell$ provided $\ell \geq 1$ 
		such that 
		
		\emph{(i)} $|y_{n,i}| \to \infty$ for $1 \leq i \leq \ell$ and 
		$|y_{n,i} - y_{n,j}| \to \infty$ for $ i \neq j$ as $n \to \infty$. 
		
		\emph{(ii)} $J'(u_0) = 0$, $\omega_i \not\equiv 0$ and 
		$J_\infty'(\omega_i) = 0$ for $1 \leq i \leq \ell$. 
		
		\emph{(iii)} When $\ell \geq 1$, 
			\[
				\left\| u_n - u_0 - \sum_{i=1}^\ell \omega_i(\cdot - y_{n,i}) 
				\right\|_\alpha \to 0, 
				\quad 
				c = \lim_{n\to \infty} J(u_n) = I(u_0) + \sum_{i=1}^\ell J_\infty (\omega_i)
			\]
		When $\ell = 0$, $\| u_n - u_0 \|_\alpha \to 0$ and 
		$c = J(u_0)$. 
	\end{proposition}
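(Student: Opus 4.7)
The plan is to follow the classical concentration--compactness splitting iteration (in the spirit of Benci--Cerami and Jeanjean--Tanaka), adapted to the Bessel-type operator $(1-\Delta)^\alpha$. First, using boundedness of $(u_n)$ in $\Ha$, I would extract a subsequence with $u_n \rightharpoonup u_0$ weakly in $\Ha$, $u_n \to u_0$ pointwise a.e., and $u_n \to u_0$ strongly in $L^p_{\rm loc}(\RN)$ for $2 \leq p < 2_\alpha^\ast$. Combining this with (G2)--(G3) and $J'(u_n) \to 0$, testing against arbitrary $\varphi \in C_0^\infty(\RN)$ and passing to the limit yields $J'(u_0) = 0$.

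Next, set $v_n^1 := u_n - u_0$, so that $v_n^1 \rightharpoonup 0$ in $\Ha$. The splitting step relies on two ingredients: the Hilbert identity $\|u_n\|_\alpha^2 = \|u_0\|_\alpha^2 + \|v_n^1\|_\alpha^2 + o(1)$, together with a Br\'ezis--Lieb type decomposition for the nonlinear term,
\[
\int_{\RN} F(x, u_n) \rd x = \int_{\RN} F(x, u_0) \rd x + \int_{\RN} F_\infty(v_n^1) \rd x + o(1),
\]
and the corresponding splitting for $J'(u_n) - J'(u_0) - J_\infty'(v_n^1)$. Assumption (G5), namely $f(x,\cdot) \to f_\infty$ in $L^\infty_{\rm loc}(\R)$ as $|x|\to\infty$, is precisely what ensures that the tails of $u_n$ (which escape any fixed ball because $v_n^1 \rightharpoonup 0$) see only the autonomous nonlinearity $f_\infty$. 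These identities imply that $v_n^1$ is a Palais--Smale sequence for $J_\infty$ at level $c - J(u_0)$.

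The iterative step is as follows: if $v_n^1 \to 0$ strongly, stop with $\ell = 0$. Otherwise, a Lions-type nonvanishing lemma applied to $(v_n^1)$ produces $y_{n,1} \in \RN$ such that, along a subsequence, $v_n^1(\cdot + y_{n,1}) \rightharpoonup \omega_1 \not\equiv 0$ in $\Ha$. Since $v_n^1 \rightharpoonup 0$, nontriviality forces $|y_{n,1}| \to \infty$, and by (G5) the limit $\omega_1$ is a critical point of $J_\infty$. Setting $v_n^2 := v_n^1 - \omega_1(\cdot - y_{n,1})$, another Br\'ezis--Lieb splitting gives $v_n^2 \rightharpoonup 0$ in $\Ha$ and $v_n^2$ is a Palais--Smale sequence of $J_\infty$ at level $c - J(u_0) - J_\infty(\omega_1)$. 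Iterating produces (possibly) further bumps $\omega_2,\omega_3,\ldots$ with translation vectors $y_{n,2}, y_{n,3},\ldots$; the mutual separation $|y_{n,i} - y_{n,j}| \to \infty$ is forced by the fact that after translating and subtracting all previously extracted bumps the residue converges weakly to $0$.

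The main obstacle---and precisely where the Pohozaev hypothesis on $J_\infty$ is essential---is showing that this iteration terminates after finitely many steps. Each extracted $\omega_i$ is a nontrivial critical point of $J_\infty$ and therefore, by hypothesis, satisfies $P_\infty(\omega_i) = 0$. Since $f_\infty$ inherits (f1)--(f4) from (G1)--(G4), Theorem \ref{definition:1.1}(iii) applied to the autonomous equation \eqref{eq:43} yields a uniform positive lower bound $J_\infty(\omega_i) \geq c_{\rm LES}^\infty > 0$. As the telescoped energies satisfy $\sum_{i=1}^k J_\infty(\omega_i) \leq c - J(u_0) + o(1)$ at each step, the process must stop after some $\ell < \infty$ iterations. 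At termination, $\|v_n^{\ell+1}\|_\alpha \to 0$, and combining this with the accumulated splittings produces the strong decomposition and the energy identity $c = J(u_0) + \sum_{i=1}^\ell J_\infty(\omega_i)$.
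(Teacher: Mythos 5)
The overall strategy is correct and matches the paper in spirit: weak extraction of $u_0$, iterative bump extraction via a Lions-type lemma, use of the Pohozaev hypothesis together with Theorem~\ref{definition:1.1} to get a positive lower bound for each $J_\infty(\omega_i)$, and termination of the iteration. However, there is a genuine gap at the very end.

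You write ``if $v_n^1 \to 0$ strongly, stop with $\ell = 0$. Otherwise, a Lions-type nonvanishing lemma applied to $(v_n^1)$ produces $y_{n,1}$,'' and later ``At termination, $\|v_n^{\ell+1}\|_\alpha \to 0$.'' This conflates the vanishing alternative with strong $\Ha$ convergence. What the Lions-type lemma (Lemma~\ref{definition:4.5}) actually delivers, when $\sup_{z\in\Z^N}\int_{z+Q}|v_n^{\ell+1}|^2\to 0$, is only that $v_n^{\ell+1}\to 0$ strongly in $L^q(\RN)$ for $2<q<2_\alpha^\ast$. Upgrading this to $\|v_n^{\ell+1}\|_\alpha\to 0$ is not automatic, and it is in fact the most technical step of the paper's proof (equations~\eqref{eq:52}--\eqref{eq:61}). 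The paper proves it by introducing the shifted nonlinearity $f_{\delta_0}(x,s) := f(x,s) - (1-\delta_0)s$ (so that $\|\cdot\|_\alpha^2-(1-\delta_0)\|\cdot\|_{L^2}^2$ is an equivalent norm and $f_{\delta_0}$ has a sign for small $|s|$ by (G2)), writing $\|U_n\|^2$ via $J'(u_n)U_n - J'(u_0)U_n - \sum_i J_\infty'(\omega_i(\cdot-y_{n,i}))U_n + o(1)$, truncating at level $M$ to control the small-measure set $[|U_n|\geq M]$, and carefully estimating the contributions near $B_R(0)$, near each $B_R(y_{n,i})$, and on the complement $V_R$ via Strauss-type convergence and the sign property of $f_{\delta_0}$. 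Without this step your argument only yields the weaker conclusion $\|u_n-u_0-\sum_i\omega_i(\cdot-y_{n,i})\|_{L^q}\to 0$, which is not the statement of Proposition~\ref{definition:4.2}(iii).

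A smaller point: your termination criterion is energy-based, namely $\sum_{i\leq k}J_\infty(\omega_i)\leq c-J(u_0)+o(1)$ with each $J_\infty(\omega_i)\geq c_{\rm LES}^\infty>0$; this implicitly requires the residual level $\lim_n J_\infty(v_n^{k+1})$ to stay bounded below uniformly in $k$, which is true but needs the orthogonality of the bumps anyway to control $\|v_n^{k+1}\|_\alpha$. The paper instead turns the energy lower bound $d_\infty>0$ into a uniform norm lower bound $\|\omega_i\|_\alpha\geq\zeta_0$ and terminates directly from the expansion $0\leq\|u_n-u_0-\sum_i\omega_i(\cdot-y_{n,i})\|_\alpha^2=\|u_n\|_\alpha^2-\|u_0\|_\alpha^2-\sum_i\|\omega_i\|_\alpha^2+o(1)$, which is cleaner since the cross terms vanish by the divergence of the $y_{n,i}$; this is worth adopting.
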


We postpone a proof of Proposition \ref{definition:4.2} and 
prove it after the proof of Theorem \ref{definition:1.3}.

Next, since $f_\infty$ satisfies (f1)--(f4) under (G1)--(G5), 
we may define the mountain pass value of $J_\infty$: 
	\[
		d_\infty := \inf_{\gamma \in \Gamma_{J_\infty}} 
		\max_{0 \leq t \leq 1} J_\infty(\gamma(t)) > 0, \quad 
		\Gamma_{J_\infty} := 
		\left\{ \gamma \in C([0,1], \Ha) \ |\ \gamma(0) =0, \ 
		J_\infty(\gamma(1)) < 0 \right\}. 
	\]

	\begin{proposition}\label{definition:4.3}
Let \emph{(G1)--(G5)} hold. Furthermore, suppose that 

\smallskip

\noindent
\emph{(I)} $F_\infty(s) \leq F(x,s) $ for all $(x,s) \in \RN \times \R$. 

\noindent
\emph{(II)} There exists a bounded Palais--Smale sequence $(u_n)$ of $J$ at 
level $d_{\rm MP}$ with $(u_n)_- \to 0$ strongly in $\Ha$. 

\noindent
{\rm (III)} Every weak solution of \eqref{eq:43} satisfies the Pohozaev identity 
$P_\infty(u) = 0$. 

\smallskip

\noindent
Then, \eqref{eq:1} admits a positive solution. 
	\end{proposition}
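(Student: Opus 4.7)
The plan is to apply the concentration-compactness splitting lemma (Proposition \ref{definition:4.2}) to the bounded Palais--Smale sequence supplied by \emph{(II)}, and then to rule out the escape-to-infinity scenario via a mountain pass comparison between $J$ and $J_\infty$.

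First I would take the sequence $(u_n)$ from \emph{(II)}. Since hypothesis \emph{(III)} is exactly what Proposition \ref{definition:4.2} requires, invoking it produces $u_0 \in \Ha$, $\ell \geq 0$, nontrivial $\omega_i \in \Ha$ ($1 \leq i \leq \ell$) and sequences $(y_{n,i})$ with
\[
u_n - u_0 - \sum_{i=1}^\ell \omega_i(\cdot - y_{n,i}) \to 0 \ {\rm in}\ \Ha, \qquad d_{\rm MP} = J(u_0) + \sum_{i=1}^\ell J_\infty(\omega_i),
\]
such that $J'(u_0) = 0$ and each $\omega_i$ is a critical point of $J_\infty$. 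The strong convergence $(u_n)_- \to 0$ passes to the limit to give $u_0 \geq 0$ a.e. Once $u_0 \not\equiv 0$ is established, the Br\'ezis--Kato-type regularity of Propositions \ref{definition:3.5} and \ref{definition:3.6}(i) (with \emph{(G3)} playing the role of \emph{(f3)}) combined with the weak Harnack inequality invoked at the end of the proof of Theorem \ref{definition:1.1} upgrade $u_0$ to a positive solution of \eqref{eq:1}.

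To rule out $u_0 \equiv 0$ I would use a mountain pass comparison. Because $f_\infty$ satisfies \emph{(f1)--(f4)}, Theorem \ref{definition:1.1} applies to $J_\infty$; together with \emph{(III)} and Remark \ref{definition:1.2}(ii) this identifies $d_\infty$ with the least energy of nontrivial critical points of $J_\infty$, so $J_\infty(\omega_i) \geq d_\infty$ for every $i$. On the other hand, \emph{(I)} gives $J(u) \leq J_\infty(u)$ for all $u \in \Ha$; choosing a least energy solution $v$ of \eqref{eq:43} and the optimal path $\gamma_v$ furnished by Theorem \ref{definition:1.1}(iii), the bound $J(\gamma_v(1)) \leq J_\infty(\gamma_v(1)) < 0$ puts $\gamma_v \in \Gamma_J$, and
\[
d_{\rm MP} \leq \max_{0 \leq t \leq 1} J(\gamma_v(t)) \leq \max_{0 \leq t \leq 1} J_\infty(\gamma_v(t)) = d_\infty.
\]
If $u_0 \equiv 0$ then $\ell \geq 1$ and the splitting identity combined with $J_\infty(\omega_i) \geq d_\infty$ forces $d_{\rm MP} = \ell d_\infty = d_\infty$, so in fact $\ell = 1$ and $J_\infty(\omega_1) = d_\infty$.

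The main obstacle is to contradict this equality $d_{\rm MP} = d_\infty$. In the degenerate case $F \equiv F_\infty$ equation \eqref{eq:1} reduces to \eqref{eq:43} and Theorem \ref{definition:1.1} supplies a positive solution directly; otherwise \emph{(I)} is strict on a set of positive measure, and I would exploit translates $\gamma_v(\cdot - y)$ of the optimal path, using the positivity, radial symmetry, decay, and the $L^\infty$-invariance $\|\gamma_v(t)\|_{L^\infty}=\|v\|_{L^\infty}$ from Theorem \ref{definition:1.1}(iii), to produce a strict inequality $\max_t J(\gamma_v(t)(\cdot - y_\ast)) < d_\infty$ for a suitable $y_\ast$, contradicting $d_{\rm MP} = d_\infty$. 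This strict mountain pass comparison is the delicate point of the argument and is exactly the place where the optimal-path characterization provided by Theorem \ref{definition:1.1}(iii) is essential.
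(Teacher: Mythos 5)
Your setup is sound and matches the paper through the splitting step: invoking Proposition \ref{definition:4.2} via hypothesis (III), extracting $u_0\geq 0$ with $J'(u_0)=0$, using regularity plus the weak Harnack inequality to upgrade a nonzero $u_0$ to a positive solution, and deducing from (I) that $d_{\rm MP}\leq d_\infty$ and that each bubble energy satisfies $J_\infty(\omega_i)\geq d_\infty$. Your Case-1 dichotomy and the forced conclusion $\ell=1$, $J_\infty(\omega_1)=d_\infty$, $d_{\rm MP}=d_\infty$ when $u_0\equiv 0$ are all correct.

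The genuine gap is in the very step you flag as delicate. Your plan is to derive a contradiction from $d_{\rm MP}=d_\infty$ whenever $F\not\equiv F_\infty$, by locating a translate $y_\ast$ with $\max_t J(\gamma_v(t)(\cdot-y_\ast))<d_\infty$. This cannot work in general: condition (I) controls $F(x,s)-F_\infty(s)\geq 0$ for all $s$, but the strict inequality might occur only for values of $s$ that the path $\gamma_v$ never attains. Since $\|\gamma_v(t)\|_{L^\infty}=\|v\|_{L^\infty}$, the integrand $F(x,\gamma_v(t))-F_\infty(\gamma_v(t))$ sees only $s\in[0,\|v\|_{L^\infty}]$, and $F$ could agree with $F_\infty$ there while differing above. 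So the strict inequality you want simply may not hold, and no choice of translate can create it.

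The paper's resolution in this borderline case is not a contradiction but a direct construction. Taking a least energy solution $\omega$ of the limit problem and its optimal path $\gamma_\omega$, for each $z$ one sandwiches
\[
d_\infty = d_{\rm MP} \leq J(\gamma_\omega(t_z)(\cdot-z)) \leq J_\infty(\gamma_\omega(t_z)(\cdot-z)) \leq J_\infty(\omega) = d_\infty,
\]
so all inequalities are equalities. The last equality, via the strictness in Theorem \ref{definition:1.1}(iii), forces $\gamma_\omega(t_z)=\omega$, and the middle equality then gives $\int F(x,\omega(x-z))\,\rd x=\int F_\infty(\omega(x-z))\,\rd x$ for every $z\in\RN$. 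Combined with $F\geq F_\infty$ and the fact that the translates of $\omega$ sweep out all of $\RN\times(0,\|\omega\|_{L^\infty}]$, this yields $F(x,s)=F_\infty(s)$, hence $f(x,s)=f_\infty(s)$, on that range; consequently $\omega$ itself is a positive solution of \eqref{eq:1}. In short, instead of forcing $d_{\rm MP}<d_\infty$ from (I), one should accept the equality case and show it produces a solution directly. Your proposal is missing this observation, which is exactly where the $L^\infty$-invariance of the optimal path earns its keep.
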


	\begin{proof}
By (I), we observe that 
$J(u) \leq J_\infty(u)$ for all $u \in \Ha$. 
Hence, $\Gamma_{J_\infty} \subset \Gamma_J$ and 
$0 < d_{\rm MP} \leq d_{\infty}$ hold. 
We divide our case into two cases: 

\medskip

\noindent
{\bf Case 1:} {\sl $d_{\rm MP} < d_\infty$ holds.}

\medskip  

In this case, we apply Proposition \ref{definition:4.2} for $(u_n)$ to obtain 
$u_0, \omega_1, \ldots, \omega_\ell \in \Ha$ and 
$(y_{n,i})_{n=1}^\infty$ ($i=1,\ldots, \ell$) satisfying (i)--(iii) 
in Proposition \ref{definition:4.2}. 
Remark that $J'(u_0) = 0$ and $u_0$ is a weak limit of $(u_n)$ 
(see the proof of Proposition \ref{definition:4.2} below). 
Arguing as in the proof of Proposition \ref{definition:3.6}, 
we observe that 
$u_0 \in C^{\beta}_{\rm b} (\RN)$ and $u_0$ is nonnegative. 
Therefore, if $u_0 \not\equiv 0$, then 
a weak Harnack inequality (\cite[Proposition 2]{FF-15}) implies 
that $u_0$ is the desired solution of \eqref{eq:1}. 
Thus, it suffices to show that the case 
$u_0 \equiv 0$ does not happen.

	To this end, let $u_0 \equiv 0$ and we may assume $\ell \geq 1$ 
thanks to $d_{\rm MP} > 0$. Since $\omega_i \not \equiv 0$ 
and $J_\infty'(\omega_i) = 0$, Theorem \ref{definition:1.1} and 
the assumption (III) assert that $ J_\infty(\omega_i) \geq d_\infty > 0$ 
for all $1 \leq i \leq \ell$. Thus Proposition \ref{definition:4.2} (iii) and $u_0 \equiv 0$ 
yield
	\[
		d_\infty > d_{\rm MP} = \sum_{i=1}^\ell J_\infty(\omega_i) 
		\geq \ell d_\infty.
	\]
Since $d_\infty > 0$, this is a contradiction. 
Hence, $u_0 \equiv 0$ does not occur in this case. 

\medskip

\noindent
{\bf Case 2:} {\sl $d_{\rm MP} = d_\infty$ holds.  }

\medskip

	In this case, by Theorem \ref{definition:1.1}, we find 
a positive solution $\omega \in \Ha$ of \eqref{eq:43} and 
a path $\gamma_\omega \in \Gamma_{J_\infty}$ so that 
	\begin{equation}\label{eq:44}
		\begin{aligned}
			& \omega(0) = \| \omega \|_{L^\infty}, 
			\quad \omega \in \gamma_\omega ([0,1]), \quad 
			\| \omega \|_{L^\infty} = 
			\| \gamma_\omega (t) \|_{L^\infty} 
			\  {\rm for}\ t \in (0,1], \\
			&
			J_\infty (\omega) = \max_{0 \leq t \leq 1} J_\infty(\gamma_\omega(t)) 
			= d_{\infty}, \quad 
			J_\infty(\gamma_\omega(t)) < J_\infty(\omega) \quad 
			{\rm if} \ \gamma_\omega(t) \neq \omega.
		\end{aligned}
	\end{equation}
Noting $\gamma_\omega(t)(\cdot - z) \in \Gamma_{J_\infty} \subset \Gamma_J$ 
for any $z \in \RN$, let 
$t_z \in (0,1)$ satisfy $\max_{0 \leq t \leq 1} J(\gamma_\omega(t)(\cdot - z)) 
= J( \gamma_\omega (t_z) (\cdot - z) )$. 
Then we get 
	\begin{equation}\label{eq:45}
		d_\infty = d_{\rm MP} \leq J(\gamma_\omega (t_z) (\cdot - z) ) 
		\leq J_\infty (\gamma_\omega (t_z) (\cdot - z) ) 
		\leq  J_\infty(\omega) = d_{\infty}. 
	\end{equation}
From \eqref{eq:44} and \eqref{eq:45}, 
we deduce that $\gamma_\omega(t_z) = \omega$ and 
	\[
		\int_{\RN} F(x, \omega (x - z )) \rd x 
		= \int_{\RN} F_\infty(\omega (x-z)) \rd x 
		\quad {\rm for\ any}\ z \in \RN.
	\]
Recalling $F_\infty (s) \leq F(x,s)$ for any $(x,s) \in \RN \times \R$, we observe that 
	\[
		F(x, s) = F_\infty(s) \quad {\rm for \ all} \ (x,s) \in \RN \times 
		\left[0,\| \omega \|_{L^\infty}\right].
	\]
This implies $f(x,s) = f_\infty (s)$ for all $x \in \RN \times [0,\| \omega \|_{L^\infty}]$. 
From this fact, we see that $\omega$ is also a positive solution of \eqref{eq:1}. 
Thus we complete the proof. 
	\end{proof}

	\begin{remark}\label{definition:4.4}
(i) From the above proof and the existence of optimal path in Theorem \ref{definition:1.1}, 
we have $d_{\rm MP} < d_\infty$ when $F(x,s) \leq F_\infty(s)$ and 
$F(x,s) \not \equiv  F_\infty(s)$ for each $s \in \R$. 

(ii)
In the case $\alpha = 1$, the Pohozaev identity is useful 
to obtain a bounded Palais--Smale sequence. For instance, we refer to \cite{AP-09,JT-05}. 
When $0<\alpha < 1$, in addition to (G1)--(G4), assume that $f(x,s)$ is differentiable in $x$, 
$\nabla_x f(x,s) \in C(\RN \times \R, \RN )$ and for each $M>0$, 
there exists a $C_M > 0$ such that 
$|\nabla_x f(x,s) | \leq C_M $ for all $(x,s) \in \RN \times [-M,M]$. 
Under these conditions, if $u \in \Ha \cap C^1_{\rm b}(\RN)$ is a weak solution of \eqref{eq:1}, 
then $u$ satisfies the following Pohozaev identity: 
	\begin{equation}\label{eq:46}
		0 = \frac{N-2\alpha}{2} \| u \|_\alpha^2 
		- N \int_{\RN} F(x,u) \rd x 
		- \int_{\RN} (x \cdot \nabla_x F) (x,u) \rd x 
		+ \alpha \int_{\RN} (1 + 4 \pi^2 |\xi|^2 )^{\alpha - 1} 
		| \wh{u} |^2 \rd \xi .
	\end{equation}
In fact, \eqref{eq:46} can be proved by following the argument of 
Proposition \ref{definition:3.6} (iii) and noting 
	\[
		\sum_{i=1}^N x_i \partial_{x_i} u f(x,u) 
		= \sum_{i=1}^N x_i 
		\left\{ \partial_{x_i} \left( F(x,u) \right)  
		- (\partial_{x_i} F(x, u))  \right\} 
		= (x \cdot \nabla_x)  F(x,u) - 
		(x \cdot \nabla_x F) (x, u). 
	\]
As in the case $\alpha = 1$, the Pohozaev identity \eqref{eq:46} may be 
useful to get a bounded Palais--Smale sequence.

	\end{remark}

Now we prove Theorem \ref{definition:1.3}.

	\begin{proof}[Proof of Theorem \ref{definition:1.3}]
Let us assume (F1)--(F5). As we have already seen, 
(G1)--(G5) also holds. Moreover, conditions (I) and (III) in Proposition \ref{definition:4.3} 
follow from (F4) and Theorem \ref{definition:1.1}. 
Thus we only need to check that (II) holds. 
For this purpose, we use (F5) to show that $(v_n)$ in \eqref{eq:42} is bounded 
in $\Ha$ and the argument is standard (for instance, see \cite{R}). From (F5), we have 
	\begin{equation}\label{eq:47}
		\begin{aligned}
			\mu d_{\rm MP} + o(1) + o(1) \| v_n \|_\alpha 
			&\geq \mu J(v_n) - J'(v_n) v_n 
			\\
			&= \frac{\mu-2}{2} 
			\left( \| v_n \|_\alpha^2 + \int_{\RN} V(x) v_n^2 \rd x \right) 
			- \int_{\RN} \mu G(x,v_n) - g(x,v_n)v_n \rd x
			\\
			& \geq 
			\frac{\mu-2}{2} 
			\left( \| v_n \|_\alpha^2 + \int_{\RN} V(x) v_n^2 \rd x \right).
		\end{aligned}
	\end{equation}
Since $\inf_{\RN} V > -1$ due to (F2), a quantity defined by 
	\[
		\| u \|^2 := \| u \|_\alpha^2 + \int_{\RN} V(x) u^2 \rd x 
	\]
is an equivalent norm to $\| \cdot \|_\alpha$. 
Therefore, from \eqref{eq:47}, we infer that 
$(v_n)$ is bounded in $\Ha$ and 
Proposition \ref{definition:4.3} implies Theorem \ref{definition:1.3}. 
	\end{proof}

		Now we turn to prove Proposition \ref{definition:4.2}. 
We first recall the following lemma due to 
\cite[Lemma 2.1]{FQT-12} (cf. \cite[Lemma 2.18]{CR-92}, \cite[Lemma I.1]{L-84-2}
and \cite[Lemma 3.1]{TWY-12}):

\begin{lemma}\label{definition:4.5}
	Let $(u_n) \subset \Ha$ be a bounded and satisfy 
		\[
			\sup_{y \in \Z^N} \int_{y + Q} |u_n|^p \rd x \to 0 
			\quad {\rm for \ some} \ p \in [2,2_\alpha^\ast), \quad 
			Q := [0,1]^N.
		\]
	Then $u_n \to 0$ strongly in $L^q(\RN)$ for all $q \in (2,2^\ast_\alpha)$. 
\end{lemma}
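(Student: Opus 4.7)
The plan is to establish $L^{s_0}$-convergence to zero for a single carefully chosen intermediate exponent $s_0 \in (2, 2_\alpha^\ast)$ via a local fractional Sobolev inequality on unit cubes, and then deduce the conclusion for every $q \in (2, 2_\alpha^\ast)$ by standard Lebesgue interpolation against the $H^\alpha$-bounded norms $\|u_n\|_{L^2}$ and $\|u_n\|_{L^{2_\alpha^\ast}}$.

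First I would reduce the vanishing hypothesis to $p = 2$: since $p \geq 2$ and $|Q| = 1$, H\"older's inequality on each cube gives $\|u_n\|_{L^2(y+Q)} \leq \|u_n\|_{L^p(y+Q)}$, so the hypothesis forces $\sup_{y \in \Z^N} \|u_n\|_{L^2(y+Q)} \to 0$. The main analytic input is then the intrinsic local fractional Sobolev embedding
\[
\|u\|_{L^{2_\alpha^\ast}(y+Q)}^2 \leq C \bigl( \|u\|_{L^2(y+Q)}^2 + [u]_{H^\alpha(y+Q)}^2 \bigr),
\]
where $[u]_{H^\alpha(y+Q)}^2 := \int_{y+Q}\int_{y+Q} |u(x)-u(x')|^2 |x-x'|^{-N-2\alpha}\,\rd x\,\rd x'$ and $C$ is independent of $y$ by translation invariance. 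Since the cubes $y+Q$ for $y \in \Z^N$ are pairwise essentially disjoint, the product sets $(y+Q) \times (y+Q)$ are disjoint in $\RN \times \RN$, which together with the Fourier characterization of $\|\cdot\|_\alpha$ yields the global bound $\sum_{y\in\Z^N} \bigl( \|u\|_{L^2(y+Q)}^2 + [u]_{H^\alpha(y+Q)}^2 \bigr) \leq C \|u\|_\alpha^2$.

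The crucial step is the choice $s_0 := 2 + 4\alpha/N$, which is the unique $s_0 \in (2, 2_\alpha^\ast)$ making the $L^{2_\alpha^\ast}$ exponent in the Lebesgue interpolation on $y+Q$ equal to $2$:
\[
\|u\|_{L^{s_0}(y+Q)}^{s_0} \leq \|u\|_{L^2(y+Q)}^{4\alpha/N}\, \|u\|_{L^{2_\alpha^\ast}(y+Q)}^{2}.
\]
Feeding in the Sobolev step and summing over $y \in \Z^N$, one factors out the vanishing $L^2$-supremum and obtains
\[
\|u_n\|_{L^{s_0}(\RN)}^{s_0} \leq C \Bigl(\sup_{y\in\Z^N} \|u_n\|_{L^2(y+Q)}\Bigr)^{4\alpha/N} \|u_n\|_\alpha^2 \to 0.
\]
Since $(u_n)$ is bounded in $L^2(\RN)$ and in $L^{2_\alpha^\ast}(\RN)$ via Sobolev embedding applied to its $H^\alpha$-bound, for every $q \in (2, 2_\alpha^\ast)$ one interpolates between $L^2$ and $L^{s_0}$ (when $q \leq s_0$) or between $L^{s_0}$ and $L^{2_\alpha^\ast}$ (when $q \geq s_0$); the smallness of $\|u_n\|_{L^{s_0}}$ then forces $\|u_n\|_{L^q(\RN)} \to 0$.

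The main technical hurdle will be verifying the uniform-in-$y$ local Sobolev embedding and the matching summability $\sum_y [u]_{H^\alpha(y+Q)}^2 \leq [u]_{H^\alpha(\RN)}^2 \leq C\|u\|_\alpha^2$. The first follows from the standard fractional Sobolev embedding on a bounded Lipschitz domain (via an extension operator to $\RN$), transported to every $y$ by translation; the second is immediate from restricting the full Gagliardo double integral to the disjoint union $\bigcup_y (y+Q) \times (y+Q) \subset \RN \times \RN$, with the equivalence between the Gagliardo seminorm and the Fourier-defined norm $\|\cdot\|_\alpha$ completing the bound. Beyond these two structural facts, the argument is purely H\"older and Lebesgue interpolation.
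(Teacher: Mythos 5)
Your proof is correct and rests on the same machinery as the paper's: decompose $\RN$ into unit cubes, invoke a local fractional Sobolev embedding on each cube (valid after a reflection-type extension, uniformly in the cube by translation invariance), use the fact that the Gagliardo double integrals over the disjoint product sets $(y+Q)\times(y+Q)$ sum to at most $[u]_{H^\alpha(\RN)}^2 \le C\|u\|_\alpha^2$, and factor out a vanishing supremum before summing. The one genuine difference is in the exponent bookkeeping: the paper first reduces \emph{upward} to WLOG $p>2$ (via interpolation against the bounded $L^{2_\alpha^\ast}$ norm), applies the subcritical local embedding $W^{\alpha,2}(y+Q)\hookrightarrow L^p(y+Q)$, and then writes $\|u_n\|_{L^p(y+Q)}^p = \|u_n\|_{L^p(y+Q)}^{p-2}\cdot\|u_n\|_{L^p(y+Q)}^2$ and factors out the sup of the first piece; you instead reduce \emph{downward} to the $p=2$ hypothesis (by H\"older on the unit cube), use the critical local embedding into $L^{2_\alpha^\ast}(y+Q)$, and interpolate on each cube with the explicit exponent $s_0 = 2+4\alpha/N$ tuned so that the $L^{2_\alpha^\ast}$ power is exactly $2$. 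Both routes then need the final Lebesgue interpolation against the global $L^2$ and $L^{2_\alpha^\ast}$ bounds to pass from one exponent to all $q\in(2,2_\alpha^\ast)$; the paper leaves that step implicit while you state it. Your version has the small advantage of working with a single universal intermediate exponent $s_0$ and a cleaner reduction step, while the paper's works directly with the exponent $p$ given in the hypothesis. Either way the proof is complete and correct.
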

This lemma is proved in \cite[Lemma 2.1]{FQT-12}, 
however, for the sake of readers, we show it here.

	\begin{proof}
First we note that by the boundedness of $(u_n)$ and 
the interpolation inequality, we may assume $2<p$ without loss of generality. 
Next, we shall prove the existence of $C_0>0$ satisfying 
		\begin{equation}\label{eq:48}
			\| u \|_{L^p(z+Q)} \leq C_0 \| u \|_{W^{2,\alpha}(z+Q)} \quad 
			{\rm for\ all}\ z \in \Z^N, \ u \in W^{2,\alpha}(z+Q)
		\end{equation}
where 
	\[
		W^{2,\alpha}(\Omega) := \{  u \in L^2(\Omega)\ |\ [u]_{W^{2,\alpha}(\Omega)} < \infty
		\}, \quad 
		[u]_{W^{2,\alpha}(\Omega)}^2 := 
		\int_{\Omega \times \Omega} 
		\frac{|u(x)-u(y)|^2}{|x-y|^{N+2\alpha}} \rd x \rd y 
	\]

	We first consider it on $Q$. For $u \in W^{2,\alpha}(Q)$, let 
	\[
		\tilde u (x',x_N) := \left\{\begin{aligned}
			& u(x',x_N) & &{\rm if}\ x_N \geq 0,\\
			& u(x',-x_N) & &{\rm if}\ x_N < 0.
		\end{aligned}\right.
	\]
Set $Q_1 := Q \cup (Q - \mathbf{e}_N)$ and $R x := (x',-x_N)$ 
where $\mathbf{e}_N :=(0,\ldots,0,1)$. 
Since $|x-y| = |Rx - Ry|$ and $|x-y| \leq |Rx - y|$ 
for all $x,y \in Q$, and $\tilde u (Rx) = u(x)$ for $x \in Q$, 
it is easy to see that 
	\[
		\begin{aligned}
			\| \tilde u \|_{L^2(Q_1)}^2 &= 2 \| u \|_{L^2(Q)}^2,\\
			 [ \tilde u]_{W^{2,\alpha}(Q_1)}^2 
			&= \int_{Q_1 \times Q_1} \frac{|\tilde u (x) - \tilde u(y)|^2  }
			{|x-y|^{N+2\alpha}} \rd x \rd y 
			\\
			&= \left( \int_{Q \times Q} + \int_{Q \times (Q - \mathbf{e}_N)} 
			+ \int_{(Q - \mathbf{e}_N) \times Q } 
			+ \int_{(Q - \mathbf{e}_N) \times (Q - \mathbf{e}_N)}   \right)
			\frac{|\tilde u (x) - \tilde u(y)|^2  }
						{|x-y|^{N+2\alpha}} \rd x \rd y
						\\
			& \leq 4 \int_{Q\times Q} \frac{| u (x) -  u(y)|^2  }
						{|x-y|^{N+2\alpha}} 
			= 4 [u]_{W^{2,\alpha}(Q)}^2.
		\end{aligned}
	\]
Thus $\tilde u \in W^{2,\alpha}(Q_1)$ and 
$\| \tilde u \|_{W^{2,\alpha}(Q_1)} \leq 2 \| u \|_{W^{2,\alpha}(Q)}$.

	Repeating the above argument $2N-1$ times for edges of $Q_1$ except for 
$x_N = -1$, there exists an $Eu \in W^{2,\alpha}(Q_{2N})$ such that 
	\[
		Eu = u \quad {\rm on} \ Q, \quad 
		\| Eu \|_{W^{2,\alpha}(Q_{2N})} \leq 2^{2N} \| u \|_{W^{2,\alpha}(Q)}
	\]
where $Q_{2N}$ is a cube satisfying 
$[-1,2]^{N} \subset Q_{2N}$. Choosing a smooth domain $\Omega \subset \RN$ such that 
$ Q \subset \Omega \subset [-1,2]^N$, it follows from Sobolev's embedding 
(\cite[Theorems 5.6 and 6.7]{DNGE-12}) that 
	\[
		\| u \|_{L^p(Q)} \leq \| Eu \|_{L^p(\Omega)} 
		\leq C_\Omega \| Eu \|_{W^{2,\alpha}(\Omega)} 
		\leq C_\Omega \| Eu \|_{W^{2,\alpha}(Q_{2N})} \leq 
		2^{2N} C_\Omega \| u \|_{W^{2,\alpha}(Q)}
	\]
for all $u \in W^{2,\alpha}(Q)$. 
For \eqref{eq:48}, it is enough to translate $Q$, $Q_{2N}$ and $\Omega$. 
Thus \eqref{eq:48} holds.

	Now we complete the proof. We first notice from \cite[Proposition 3.4]{DNGE-12} 
that 
	\[
		[ u ]_{W^{2,\alpha}(\RN)}^2 = C(N,\alpha) \int_{\RN} |\xi|^{2\alpha} 
		| \hat{u} (\xi) |^2 \rd \xi. 
	\]
Thus, it is easily seen that 
	\[
		\begin{aligned}
			\sum_{z \in \Z^N} [u]_{W^{2,\alpha}(z+Q)}^2 
			&= \sum_{z \in \Z^N} 
			\int_{(z+Q) \times (z+Q)} \frac{| u (x) -  u(y)|^2  }{|x-y|^{N+2\alpha}} 
			\rd x \rd y
			\\
			\\
			& \leq \sum_{z \in \Z^N} 
			\int_{\RN \times (z+Q)} \frac{| u (x) -  u(y)|^2  }{|x-y|^{N+2\alpha}} 
			\rd x \rd y = [u]_{W^{2,\alpha}(\RN)}^2 \leq C \| u \|_{\alpha}^2. 
		\end{aligned}
	\]
Therefore, by \eqref{eq:48} and $p>2$, we obtain 
	\[
		\begin{aligned}
			\| u_n \|_{L^p(\RN)}^p &= \sum_{z \in \Z^N} \| u_n \|_{L^p(z+Q)}^p 
			\leq \left( \sup_{z \in \Z^N} \| u_{n} \|_{L^p(z+Q)}^{p-2} \right) 
			\sum_{z \in \Z^N} \| u_n \|_{L^p(z+Q)}^{2}
			\\
			&\leq \left( \sup_{z \in \Z^N} \| u_{n} \|_{L^p(z+Q)}^{p-2} \right) 
			\sum_{z \in \Z^N} C_0 \| u \|_{W^{2,\alpha}(z+Q)}^2 
			\\
%			&\leq C_0 \left( \sup_{z \in \Z^N} \| u_{n} \|_{L^p(z+Q)}^{p-2} \right) 
%			\| u_n \|_{W^{2,\alpha}(\RN)}^2 
%			\\
			& \leq C_0 \left( \sup_{z \in \Z^N} \| u_{n} \|_{L^p(z+Q)}^{p-2} \right) 
			\| u_n \|_{\alpha}^2 \to 0. \qedhere
		\end{aligned}
	\]
	\end{proof}

Now we prove Proposition \ref{definition:4.2}. 

	\begin{proof}[Proof of Proposition \ref{definition:4.2}]
We argue as in \cite[Proof of Proposition 4.2]{JT-04}. 
Since $(u_n)$ is bounded in $\Ha$, 
choosing a subsequence if necessary 
(still denoted by $(u_n)$), we may assume 
$u_n \rightharpoonup u_0$ weakly in $\Ha$. 
From $J'(u_n) \to 0$ strongly in $(\Ha)^\ast$, it is easy to check $J'(u_0)= 0$.

	Next, we claim that there exist $\ell \geq 0$, $\omega_i \not \equiv 0$ 
and $(y_{n,i})_{n=1}^\infty$ for $i = 1, \ldots, \ell$ if $\ell \neq 0$ such that 
properties (i) and (ii) in Proposition \ref{definition:4.2} hold and 
	\begin{equation}\label{eq:49}
		\left\| u_n - u_0 - \sum_{i=1}^\ell \omega_i(\cdot - y_{n,i}) \right\|_{L^p} 
		\to 0 \quad {\rm for \ every}\  p \in (2 , 2_\alpha^\ast).
	\end{equation}
For this purpose, we consider 
	\[
		\limsup_{n\to \infty}\sup_{z \in \Z^N} \int_{z + Q} | u_n - u_0 |^2 \rd x 
		=: c_1.
	\]
If $c_1= 0$, then Lemma \ref{definition:4.5} yields \eqref{eq:49} with $\ell = 0$.

	Next, consider the case $c_1>0$. 
Then we choose $(y_{n,1}) \subset \RN$ such that 
	\[
		\lim_{n \to \infty} \int_{y_{n,1} + Q} 
		|u_n-u_0|^2 \rd x \to c_1 > 0.
	\]
Let $u_n(\cdot + y_{n,1}) \rightharpoonup \omega_1$ 
weakly in $\Ha$. Since $u_n \to u_0$ strongly in $L^2_{\rm loc}(\RN)$ 
and $c_1>0$, we have $|y_{n,1}| \to \infty$ and 
$\omega_1 \not\equiv 0$. Moreover, 
from $|y_{n,1}| \to \infty$ and 
$J'(u_n) [ \varphi (\cdot - y_{n,1})  ] \to 0$ for each $\varphi \in C^\infty_0(\RN)$, 
we also see that $J_\infty'(\omega_1) = 0$ by (G5). 
Since every weak solution of \eqref{eq:43} satisfies the Pohozaev identity 
$P_\infty(u) = 0$ and $f_\infty$ satisfies (f1)--(f4), 
by Theorem \ref{definition:1.1}, we have $J_\infty(\omega_1) \geq d_{\infty} > 0$. 
Choosing a $\zeta_0>0$ so that 
$J_\infty(u) < d_{\infty}$ for all $\| u \|_\alpha < \zeta_0$, 
we obtain $\| \omega_1 \|_\alpha \geq \zeta_0$.

	Next, consider 
		\[
			\limsup_{n \to \infty} \sup_{z \in \Z^N} 
			\int_{z+Q} | u_n - u_0 - \omega_1(x-y_{n,1})|^2 \rd x 
			=: c_2.
		\]
When $c_2 = 0$, then Lemma \ref{definition:4.5} yields \eqref{eq:49}. 
On the other hand, when $c_2>0$, we select a $(y_{n,2}) \subset \RN$ so that 
	\[
		\lim_{n \to \infty} \int_{y_{n,2}+Q} |u_n - u_0 - \omega_1(x-y_{n,1})|^2 \rd x 
		= c_2. 
	\]
Let $u_{n,2}(x+y_{n,2}) - u_0(x+y_{n,2}) - \omega_1(x-y_{n,1}+y_{n,2}) 
\rightharpoonup \omega_2$ weakly in $\Ha$. 
Then as in the above, it is immediate to see that 
	\[
		|y_{n,2}| \to \infty, \ 
		|y_{n,1} - y_{n,2}| \to \infty, \ 
		u_{n}(x + y_{n,2}) \rightharpoonup \omega_2 \not \equiv 0, \ 
		J_\infty'(\omega_2) = 0, 
		\ J_\infty(\omega_2) \geq d_{\infty},\ 
		\| \omega_2 \|_\alpha \geq \zeta_0.
	\]

	Now we repeat the same procedure. Namely, consider 
	\[
		\limsup_{n\to \infty} \sup_{z \in \Z^N} \int_{z + Q} 
		| u_n - u_0 - \omega_1(x-y_{n,1}) - \omega_2(x-y_{n,2})|^2 \rd x 
		= : c_3
	\]
and find $\omega_3 \not \equiv 0$ and $(y_{n,3})$ if $c_3>0$. 
Therefore, we obtain 
$\ell \in \N$, $\omega_i \not\equiv 0$ ($1 \leq i \leq \ell$) and 
$(y_{n,i})_{n=1}^\infty$ ($1 \leq i \leq \ell$) so that 
	\begin{equation}\label{eq:50}
		\begin{aligned}
			& |y_{n,i}| \to \infty, \ |y_{n,i} - y_{n,j}| \to \infty 
			\ {\rm if} \ i \neq j, \ 
			u_n(x+y_{n,i}) \rightharpoonup \omega_i \not\equiv 0, \\
			&J_\infty'(\omega_i) = 0, \ 
			J_\infty (\omega_i) \geq d_{\infty}, \ 
			\| \omega_i \|_\alpha \geq \zeta_0.
		\end{aligned}
	\end{equation}
To prove \eqref{eq:49}, it suffices to prove that this procedure cannot be iterated 
infinitely many times and 
	\begin{equation}\label{eq:51}
		\limsup_{n\to\infty} 
		\sup_{z \in \Z^N} \int_{z + Q} 
		\left| u_n - u_0 - \sum_{i=1}^\ell \omega_i(x-y_{n,i}) \right|^2 \rd x 
		=0 \quad {\rm for\ some} \ \ell \in \N.
	\end{equation}
To see this, from \eqref{eq:50} it follows that 
	\[
		\begin{aligned}
			0 &\leq  \left\| u_n - u_0 - \sum_{i=1}^\ell \omega_i(\cdot - y_{n,i}) \right\|_\alpha^2 
			= \| u_n \|_\alpha^2 + \| u_0 \|_\alpha^2 
			+ \sum_{i=1}^\ell \| \omega_i \|_\alpha^2 
			\\
			& \quad 
			- 2 \la u_n, u_0 \ra_\alpha - 2 \sum_{i=1}^\ell 
			\la u_n , \omega_i(\cdot - y_{n,i}) \ra_\alpha 
			\\
			& \quad 
			+ 2 \sum_{i=1}^\ell \la u_0 , \omega_i(\cdot - y_{n,i}) \ra_\alpha 
			+ 2 \sum_{1 \leq i < j \leq \ell} 
			\la \omega_i ( \cdot - y_{n,i}) , \omega_j ( \cdot - y_{n,j}) \ra_\alpha
			\\
			&= \| u_n\|_\alpha^2 - \| u_0 \|_\alpha^2 
			- \sum_{i=1}^\ell \| \omega_i \|_\alpha^2 + o(1)
			\\
			& \leq \| u_n \|_\alpha^2 - \|u_0\|_\alpha^2 - \ell \zeta_0 + o(1).
		\end{aligned}
	\]
Since $(u_n)$ is bounded, we observe that the above procedure cannot 
be iterated infinitely many times. 
Therefore, \eqref{eq:51} holds, which implies \eqref{eq:49}.

	Finally, we shall prove that  
	\begin{equation}\label{eq:52}
		\left\| u_n - u_0 - \sum_{i=1}^\ell \omega_i(\cdot - y_{n,i}) \right\|_\alpha 
		\to 0.
	\end{equation}
To do this, set $U_n(x) := u_n(x) - u_0(x) - \sum_{i=1}^\ell \omega_i(x-y_{n,i})$. 
By (G2), select $\delta_0>0$ and $s_1>0$ so that 
	\begin{equation}\label{eq:53}
		f(x,s) s \leq (1-\delta_0) s^2 \quad {\rm for\ all} \ 
		(x,s) \in \RN \times [-s_1,s_1].
	\end{equation}
It is clear that a norm
	\[
		\| u \|^2 := \| u \|_\alpha^2 - (1-\delta_0) \|u\|^2_{L^2}
	\]
is equivalent to $\|\cdot\|_\alpha$. Therefore, instead of \eqref{eq:52}, 
we shall show $\| U_n \| \to 0$.

	To this end, putting $f_{\delta_0}(x,s) := f(x,s) - (1-\delta_0)s$ and 
$f_{\infty,\delta_0}(s) := f_\infty(s) - (1-\delta_0)s$, 
we first notice from $J'(u_n) \to 0$, $J'(u_0)=0$ and $J_\infty'(\omega_i)=0$ that 
	\begin{equation}\label{eq:54}
		\begin{aligned}
			\| U_n \|^2 = &\| U_n \|_\alpha^2 - (1-\delta_0) \|U_n\|_{L^2}^2
			\\
			= \,& \la u_n - u_0 - \sum_{i=1}^\ell \omega_i(\cdot - y_{n,i}) , U_n
			\ra_\alpha - (1-\delta_0) \la u_n - u_0 - \sum_{i=1}^{\ell} \omega_i(\cdot - y_{n,i}) ,
			U_n \ra_{L^2}
			\\
			=\,& \int_{\RN} f_{\delta_0}(x,u_n) U_n \rd x - \int_{\RN} f_{\delta_0}(x,u_0) U_n \rd x 
			- \sum_{i=1}^\ell \int_{\RN} f_{\infty,\delta_0}(\omega_i(x-y_{n,i})) U_n \rd x + o(1)
			\\
			=\,&\int_{\RN} \left\{ f_{\delta_0}(x,u_n) - f_{\delta_0}(x,u_0) - \sum_{i=1}^\ell 
			f_{\delta_0}(x, \omega_i(x-y_{n,i}))   \right\} U_n \rd x 
			\\
			& \quad + \sum_{i=1}^\ell
			\int_{\RN} \left\{ 	f(x, \omega_i(x-y_{n,i})) - f_\infty( \omega_i(x-y_{n,i}) ) \right\} 
			U_n \rd x  + o(1)
			\\
			=:\,& I_n + II_n + o(1).
		\end{aligned}
	\end{equation}

		We shall show $I_n=o(1)=II_n$. We first consider $I_n$. 
For any $M > 0$, by H\"older's inequality, we have 
	\begin{equation}\label{eq:55}
		\begin{aligned}
			& \int_{[|U_n| \geq M]} \left| f_{\delta_0}(x,u_n) - f_{\delta_0}(x,u_0) 
			- \sum_{i=1}^{\ell} f_{\delta_0}(x, \omega_i(x-y_{i,n}) ) \right| 
			| U_n | \rd x
			\\
			 \leq \, & \|U_n\|_{L^{2_\alpha^\ast} ([|U_n| \geq M]) } 
			 \Big( \| f_{\delta_0}(x,u_n) \|_{L^{p^\ast} ([|U_n| \geq M]) }  
			 + \| f_{\delta_0}(x,u_0) \|_{L^{p^\ast} ([|U_n| \geq M]) } 
			 \\
			 & \hspace{5cm}
			 + \sum_{i=1}^\ell \| f_{\delta_0} (x, \omega_i(x - y_{n,i}) ) 
			 \|_{L^{p^\ast} ([|U_n| \geq M]) } \Big)
		\end{aligned}
	\end{equation}
where $p^\ast := 2_\alpha^\ast / (2_\alpha^\ast - 1)$. 
Since $(U_n)$ is bounded in $L^{2_\alpha^\ast}(\RN)$ and $p^\ast < 2$, 
we have 
	\[
		C_1 \geq \| U_n \|_{L^{2_\alpha^\ast} ([|U_n| \geq M]) }^{2^\ast_\alpha} 
		\geq M^{2_\alpha^\ast} \mathcal{L}^N( [|U_n| \geq M] ), \quad 
		\| u \|_{L^{p^\ast} ( [ |U_n| \geq M ] ) }^{p^\ast} 
		\leq \mathcal{L}^N( [|U_n| \geq M] )^{1 - p^\ast / 2} 
		\| u \|_{L^2}^{p_\ast}
	\]
where $C_1>0$ is independent of $n$. In particular, 
$ \sup_{n \geq 1} \mathcal{L}^N( [|U_n| \geq M] ) \to 0$ as $M \to \infty$. 
Recalling $|f_{\delta_0}(x,s)| \leq c_\e |s| + \e |s|^{2_\alpha^\ast}$ for all 
$(x,s) \in \RN \times \R$, it follows from H\"older's inequality and 
the boundedness of $(u_n)$ that 
	\[
		\begin{aligned}
			& \sup_{n \geq 1} 
			\left\{ \| f_{\delta_0}(x,u_n) \|_{L^{p^\ast} ([|U_n| \geq M]) }^{p^\ast} 
			+ \| f_{\delta_0}(x,u_0) \|_{L^{p^\ast} ([|U_n| \geq M]) }^{p^\ast} 
			+ \| f_{\delta_0}(x,\omega_i(x - y_{i,n})) \|_{L^{p^\ast} ([|U_n| \geq M]) }^{p^\ast} 
			\right\}
			\\
		\leq \, & \sup_{n \geq 1} \int_{[|U_n| \geq M]} 
		\Big\{ c_\e |u_n|^{p^\ast} + \e |u_n|^{2_\alpha^\ast} 
		+ c_\e |u_0|^{p^\ast} + \e |u_0|^{2_\alpha^\ast}
		\\
		& \qquad \qquad \qquad 
		+ \sum_{i=1}^\ell \left( c_\e |\omega_i(x-y_{n,i})|^{p^\ast} 
		+  \e |\omega_i(x-y_{n,i})|^{2_\alpha^\ast}\right) \Big\} \rd x 
		\\
		\leq \, & c_\e \mathcal{L}^N( [|U_n| \geq M] )^{1 - p^\ast / 2}  + C_2 \e
		\end{aligned}
	\]
for some $C_2>0$. 
Therefore, by \eqref{eq:55} and 
$\sup_{n \geq 1}\mathcal{L}^N( [|U_n| \geq M] ) \to 0$ as $M \to \infty$, we get 
	\[
		\begin{aligned}
			&\limsup_{M \to \infty} 
			\sup_{n \geq 1}\int_{[|U_n| \geq M]} \left| f_{\delta_0}(x,u_n) - f_{\delta_0}(x,u_0) 
			- \sum_{i=1}^{\ell} f_{\delta_0}(x, \omega_i(x-y_{i,n}) ) \right| 
			| U_n | \rd x 
			\leq C_2 \e.
		\end{aligned}
	\]
Since $\e>0$ is arbitrary, we deduce that 
	\begin{equation}\label{eq:56}
		\limsup_{M \to \infty} 
		\sup_{n \geq 1}\int_{[|U_n| \geq M]} \left| f_{\delta_0}(x,u_n) - f_{\delta_0}(x,u_0) 
		- \sum_{i=1}^{\ell} f_{\delta_0}(x, \omega_i(x-y_{i,n}) ) \right| 
		|U_n| \rd x = 0.
	\end{equation}

	On the other hand, denote by $\chi_n^M(x) := \chi_{[|U_n| \leq M]}(x)$ 
the characteristic function of $[|U_n| \leq M]$. 
Since $u_n \to u_0$ and $u_n(x+y_{n,i}) \to \omega_i$ 
strongly in $L^p_{\rm loc}(\RN)$ for every $p < 2_\alpha^\ast$, 
for all $R>0$, Strauss' lemma, (G2), (G3), (G5) and the facts $|y_{n,i}| \to \infty$ 
and $|y_{n,i} - y_{n,j}| \to \infty$ for $i \neq j$ yield 
	\begin{equation}\label{eq:57}
		\begin{aligned}
			&\int_{B_R(0) } \chi_n^M(x) 
			\left\{ | f_{\delta_0}(x,u_n) - f_{\delta_0}(x,u_0)| 
			+ \sum_{i=1}^\ell | f_{\delta_0} (x, \omega_i( x -y_{n,i} )) | \right\}  |U_n| \rd x 
			\\
			\leq \, &  M \int_{B_R(0)} |f_{\delta_0} (x,u_n) - f_{\delta_0}(x,u_0)| 
			+ \sum_{i=1}^\ell | f_{\delta_0} (x, \omega_i( x -y_{n,i} )) |  \rd x \to 0 
			\quad {\rm as} \ n \to \infty,
			\\
			&\int_{B_R(y_{n,i})  } \chi_n^M(x) 
			\left\{  |f_{\delta_0}(x,u_0)| + \sum_{j \neq i}
			|f_{\delta_0} (x, \omega_j(x - y_{n,j} ) ) | \right\} |U_n| \rd x 
			\\
			& \quad + 
			\int_{ B_R(y_{n,i}) } \chi_n^M (x)
			 | f_{\delta_0}(x,u_n) - f_{\delta_0}(x,\omega_i(x-y_{n,i})) | |U_n| \rd x 
			\\
			\leq \,& M
			 \int_{B_R(0)} \Big\{ |f_{\delta_0} (x+y_{n,i},u_0(x+y_{n,i})| 
			 + \sum_{j \neq i} | f_{\delta_0} (x+y_{n,i}, \omega_j(x+y_{n,i} - y_{n,j} ) ) | 
			 \\
			 & \qquad \qquad 
			 + | f_{\delta_0}(x+y_{n,i},u_n(x+y_{n,i})) 
			- f_{\delta_0}(x+y_{n,i}, \omega_i(x))| \Big\}  \rd x 
			  \to 0 \quad {\rm as} \ n \to \infty.
		\end{aligned}
	\end{equation}
Writing $V_R := \RN \setminus ( B_R(0) \cup \bigcup_{i=1}^\ell B_R(y_{n,i} ))$ 
and recalling $|f_{\delta_0}(x,s)| \leq c_1(|s| + |s|^{2_\alpha^\ast})$, we have 
	\begin{equation}\label{eq:58}
		\begin{aligned}
			\int_{V_R} \chi_n^M |f_{\delta_0}(x,u_0) U_n| \rd x 
			&\leq c_1 \int_{V_R}  ( |u_0| + |u_0|^{2_\alpha^\ast-1}) |U_n| \rd x 
			\\
			&\leq c_1 \left( \| u_0 \|_{L^2( V_R )} \| U_n \|_{L^2(V_R)} 
			+ \| u_0 \|_{L^{2_\alpha^\ast} (V_R) }^{2_\alpha^\ast - 1} \| U_n \|_{L^{2_\alpha^\ast}} \right) 
			= o_R(1)
		\end{aligned}
	\end{equation}
where $o_R(1) \to 0$ as $R \to 0$ uniformly in $n$ and $M \geq 1$. 
Similarly, we also obtain 
	\begin{equation}\label{eq:59}
		\int_{V_R}  \chi_n^M |f_{\delta_0}(x,\omega_i(x-y_{n,i}))||U_n| \rd x 
		+ \int_{V_R } \chi_n^M |f_{\delta_0}(x,u_n)| \left( |u_0| 
		+ \sum_{i=1}^\ell |\omega_i(x-y_{n,i})| \right) 
		\rd x = o_R(1).
	\end{equation}

		Finally, noting that $f_{\delta_0}(x,s) s \leq 0$ for all $|s| \leq s_1$ and 
that $|f_{\delta_0}(x,s) s| \leq \e |s|^{2_\alpha^\ast}+ c_\e |s|^{p_0}$ 
for all $x \in \RN$ and $|s| \geq s_1$ where $p_0 \in (2,2_\alpha^\ast )$, 
by \eqref{eq:53}, we have 
	\[
		\begin{aligned}
			\int_{V_R} f_{\delta_0}(x,u_n) \chi_n^M u_n \rd x 
			&= \int_{V_R} f_{\delta_0}(x,\chi_n^M u_n) \chi_n^M u_n \rd x 
			\leq \int_{V_R \cap [|\chi_n^M u_n| \geq s_1]}
			f_{\delta_0} (x,\chi_n^M u_n) \chi_n^M u_n \rd x
			\\
			& \leq \e \| \chi_n^M u_n \|_{L^{2_\alpha^\ast}(V_R)}^{2_\alpha^\ast} 
			+ c_\e \| \chi_n^M u_n \|_{L^{p_0}(V_R)}^{p_0}.
		\end{aligned}
	\]
Recalling \eqref{eq:49}, $2 < p_0 < 2_\alpha^\ast$ and the definition of $V_R$, 
we obtain 
	\[
		\begin{aligned}
			& \limsup_{n\to \infty} 
			\| \chi_n^M u_n \|_{L^{p_0}(V_R)} 
			\\
			\leq \, & \limsup_{n\to \infty} 
			\left(  \left\| u_n - u_0 
			- \sum_{i=1}^\ell \omega_i (\cdot - y_{n,i}) \right\|_{L^{p_0}(V_R)} 
			+ \| u_0 \|_{L^{p_0}(V_R)} + \sum_{i=1}^{\ell} 
			\| \omega_i (\cdot - y_{n,i}) \|_{L^{p_0}(V_R)} \right)
			\\
			= \, & o_R(1),
		\end{aligned}
	\]
which implies $\limsup_{n \to \infty} 
| \int_{V_R} f_{\delta_0}(x,u_n) \chi_n^M u_n \rd x | \leq c \e + c_\e o_R(1)$ 
for some $c>0$. Thus, 
	\begin{equation}\label{eq:60}
		\limsup_{R \to \infty} \limsup_{n\to\infty} 
		\left| \int_{V_R} f_{\delta_0}(x,u_n) \chi_n^M u_n \rd x   \right| 
		\leq c \e.
	\end{equation}
Collecting \eqref{eq:58}, \eqref{eq:59} and \eqref{eq:60} with 
$U_n = u_n - u_0 - \sum_{i=1}^\ell \omega_i(x-y_{n,i})$, we observe that 
	\begin{equation}\label{eq:61}
		\limsup_{R \to \infty} \limsup_{n\to \infty} 
		\left| \int_{V_R}  \left\{ f_{\delta_0}(x,u_n) - f_{\delta_0}(x,u_0) - \sum_{i=1}^\ell 
					f_{\delta_0}(x, \omega_i(x-y_{n,i}))   \right\} \chi_n^M U_n \rd x \right| 
					\leq c \e. 
	\end{equation}
Since $\e > 0$ is arbitrary, by \eqref{eq:57} and \eqref{eq:61}, we obtain 
	\[
		\limsup_{n\to \infty} \left|\int_{\RN} 
		 \left\{ f_{\delta_0}(x,u_n) - f_{\delta_0}(x,u_0) - \sum_{i=1}^\ell 
					f_{\delta_0}(x, \omega_i(x-y_{n,i}))   \right\} \chi_n^M U_n \rd x 
					\right| = 0.
	\]
Combining this with \eqref{eq:56}, we observe that 
	\[
		\limsup_{n \to \infty}|I_n| = 0.
	\]

		In a similar way, we can also prove that 
$\limsup_{n \to \infty} |II_n| = 0$. Hence, by \eqref{eq:54}, 
we get $\| U_n \| \to 0$ as $n \to \infty$ and 
this completes the proof. 
	\end{proof}

\appendix

% % % % % % % % % % % % % % % % % % % % % % % % % % % % % % % % % % %
% % % % % % % % % % % % % % % % % % % % % % % % % % % % % % % % % % %
% % % % % % % % % % % % % % % % % % % % % % % % % % % % % % % % % % %

\section{Some technical Lemmas}
\label{section:A}

% % % % % % % % % % % % % % % % % % % % % % % % % % % % % % % % % % %
% % % % % % % % % % % % % % % % % % % % % % % % % % % % % % % % % % %
% % % % % % % % % % % % % % % % % % % % % % % % % % % % % % % % % % %

Here we prove some technical results. 
First, we show the following:

\begin{proposition}\label{definition:A.1}
\emph{(i)}
Let $0 < \delta_0 < 1$ and define $m(\xi)$ by 
	\[
		m(\xi) := \frac{1}{(1+4\pi^2 |\xi|^2)^\alpha - (1-\delta_0)}.
	\]
Set $K(x) := \scrF^{-1} m$. Then $K(x) \in C^\infty(\RN\setminus \{0\})$ and 
for any $k>0$ there exists a $c_k>0$ such that 
$|K(x)| \leq c_k( \chi_{B_1(0)}(x) |x|^{-N +2 \alpha} + \chi_{B_1(0)^c}(x) |x|^{-k} )$ 
for all $x \in \RN$.

\emph{(ii)} 
For any $g \in L^2(\RN)$, the equation 
	\begin{equation}\label{eq:62}
		(1-\Delta )^\alpha v - (1-\delta_0) v = g(x) \quad {\rm in}\ \RN
	\end{equation}
has a unique solution $v \in \Ha$. 
Moreover, when $g \in L^2(\RN) \cap L^\infty(\RN)$ and $g (x) \geq 0, \not\equiv 0$, 
then $v \in C^{\beta}_{\rm b}(\RN)$ for any $0<\beta < 2\alpha$ and 
$ v > 0$ in $\RN$. 
In addition, if ${\rm supp}\, g$ is compact and $g \in L^\infty(\RN)$ 
with $g(x) \geq 0$, 
then for any $k \in \RN$ one finds a $c_k>0$ such that 
$v(x) \leq c_k(1+|x|)^{-k}$ for all $x \in \RN$.

\emph{(iii)} 
Suppose that $g_i \in L^2(\RN) \cap L^\infty(\RN)$ 
$(i=1,2)$ satisfy 
$g_1 \leq g_2$. 
Let $v_i \in \Ha$ be a unique solution of \eqref{eq:62} 
with $g(x) = g_i(x)$. 
Then $v_1(x) \leq v_2(x)$ for $x \in \RN$. 
\end{proposition}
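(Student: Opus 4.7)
The strategy is to reduce the entire proposition to a single explicit series representation for $K$. Since $0 < \delta_0 < 1$, we have $(1-\delta_0)(1+4\pi^2|\xi|^2)^{-\alpha} \leq 1-\delta_0 < 1$ uniformly in $\xi$, so the Neumann expansion
\[
  m(\xi) = \frac{(1+4\pi^2|\xi|^2)^{-\alpha}}{1-(1-\delta_0)(1+4\pi^2|\xi|^2)^{-\alpha}} = \sum_{k=0}^{\infty}(1-\delta_0)^k (1+4\pi^2|\xi|^2)^{-\alpha(k+1)}
\]
converges pointwise and in $L^\infty(\RN)$. Recalling from \eqref{eq:9} that $\scrF^{-1}[(1+4\pi^2|\xi|^2)^{-\gamma}] = G_{2\gamma}$ with $\|G_{2\gamma}\|_{L^1}=1$, the inverse Fourier transform may be taken term by term to yield, with absolute convergence in $L^1(\RN)$,
\[
  K = \sum_{k=0}^\infty (1-\delta_0)^k G_{2\alpha(k+1)}, \qquad \|K\|_{L^1} \leq 1/\delta_0.
\]
All three parts are then read off this formula together with the pointwise information on $G_\gamma$ given in \eqref{eq:9} and Stein, Chapter~V.

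For (i), each $G_\gamma \in C^\infty(\RN\setminus\{0\})$ is strictly positive and, by \eqref{eq:9} and Stein, enjoys bounds of the form $G_\gamma(x) \leq C_\gamma(|x|^{-N+\gamma}\chi_{B_1(0)}(x) + e^{-c|x|}\chi_{B_1(0)^c}(x))$ for $\gamma < N$, and is bounded near the origin when $\gamma \geq N$; the constants $C_\gamma$ can be taken to grow at most polynomially in $\gamma$, so that the geometric factor $(1-\delta_0)^k$ dominates. Summing, the singular part at the origin comes only from the $k=0$ term and yields $|K(x)| \leq c\,|x|^{-N+2\alpha}$ on $B_1(0)$, while the exponential decay of each $G_\gamma$ at infinity, together with absolute convergence of the bound series, gives $|K(x)| \leq c_k |x|^{-k}$ on $B_1(0)^c$ for any $k>0$. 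Smoothness of $K$ away from the origin follows by termwise differentiation, justified by analogous uniform bounds on the derivatives of $G_\gamma$.

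For (ii), existence and uniqueness in $\Ha$ are immediate from $\hat v := m\hat g$: the symbol $(1+4\pi^2|\xi|^2)^\alpha m(\xi)$ is bounded on $\RN$ (it tends to $1$ at infinity and equals $1/\delta_0$ at the origin), so $v \in \Ha$, and equivalently $v = K \ast g$. When $g \in L^2 \cap L^\infty$, Young's inequality combined with $K \in L^1$ gives $v \in L^\infty$; rewriting $(1-\Delta)^\alpha v = (1-\delta_0)v + g \in L^p$ for every $p \in [2,\infty]$, the identity $v = G_{2\alpha} \ast ((1-\delta_0)v + g)$ together with the embedding $G_{2\alpha} \ast L^p \subset W^{2\alpha, p}(\RN)$ and Sobolev then yields $v \in C^\beta_{\rm b}(\RN)$ for every $\beta \in (0, 2\alpha)$, exactly as at the end of the proof of Proposition~\ref{definition:3.6}(i). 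Positivity follows from the fact that $K(x) > 0$ pointwise (each $G_{2\alpha(k+1)}$ is strictly positive): if $g \geq 0$ and $g \not\equiv 0$ then $v(x) = \int_{\RN} K(x-y)g(y)\,\rd y > 0$ for every $x \in \RN$. Finally, if $\mathrm{supp}\,g \subset \overline{B_R(0)}$ and $g \in L^\infty$, then the bound from (i) gives, for $|x|\geq 2R$,
\[
  v(x) \leq c_k \|g\|_{L^\infty} |B_R(0)| \, (|x|/2)^{-k},
\]
which, combined with $\|v\|_{L^\infty} \leq \|K\|_{L^1}\|g\|_{L^\infty}$ on $|x|\leq 2R$, yields $v(x) \leq c'_k(1+|x|)^{-k}$ as claimed.

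Part (iii) is then one line: with $w := v_2 - v_1 = K \ast (g_2 - g_1)$, nonnegativity of $K$ and of $g_2 - g_1$ forces $w \geq 0$. The main technical point lies in (i), namely verifying uniform-in-$\gamma$ control of the pointwise bounds on $G_\gamma$ so that the series for $K$ retains the expected singularity $|x|^{-N+2\alpha}$ at the origin and arbitrary polynomial decay at infinity; once these kernel estimates are in hand, the remaining content consists of standard convolution and bootstrap arguments of the type already used in Section~\ref{section:3}.
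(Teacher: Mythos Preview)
Your Neumann-series approach, writing $K = \sum_{k\geq 0}(1-\delta_0)^k G_{2\alpha(k+1)}$, is correct and genuinely different from the paper's. For (i) the paper instead cites Stein's symbol calculus for the local singularity and smoothness, and obtains the decay in one line by noting $\Delta_\xi^j m \in L^1(\RN)$ for every integer $j>N/2$, whence $|x|^{2j}K \in L^\infty$. For the positivity assertion in (ii) the paper does not use $K>0$; it rewrites $v = G_{2\alpha}\ast\bigl((1-\delta_0)v + g\bigr)$ and runs a maximum-principle argument: if $v$ attained a global minimum at some $x_0$, then $v(x_0) > (1-\delta_0)\int_{\RN} G_{2\alpha}(y)v(x_0-y)\,\rd y \geq (1-\delta_0)v(x_0)$, forcing $v(x_0)>0$, which contradicts $v(x)\to 0$ at infinity. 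Part (iii) in the paper is then reduced to (ii) applied to $w=v_2-v_1$. Your route has the advantage of making the positivity of $K$ manifest (each $G_\gamma>0$), so (ii) and (iii) become one-line convolution inequalities; the trade-off is that the uniform-in-$\gamma$ control of the constants in the Bessel-kernel bounds, which you correctly flag as the main technical point, becomes the real work. This is indeed doable---the prefactor $1/\Gamma(\gamma/2)$ in $G_\gamma$ helps rather than hurts as $\gamma\to\infty$, and only finitely many terms contribute singularities at the origin---but carrying it out carefully (including the derivative bounds needed for smoothness) is more laborious than the paper's Fourier-side differentiation of the symbol.
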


	\begin{proof}
(i) The smoothness of $K$ and the inequality $K(x) \leq C_0 |x|^{-N+2\alpha}$ 
for $|x| \leq 1$ follow from the arguments in \cite[$\S$4.4 of Chapter VI]{St-2}. 
For the decay estimate at infinity, since $(\Delta_\xi)^k m(\xi) \in L^1(\RN)$ 
provided $k > N/2$, we have 
$(4\pi^2|x|^2)^k K(x) = \scrF^{-1} \left( \Delta^k m \right) \in L^\infty (\RN)$. 
From this, the desired estimate follows.

	(ii) First, notice that a norm defined by 
	\[
		\| u \|^2 := (u,u), \quad 
		(u,v) := \la u, v \ra_\alpha - (1-\delta_0) \la u, v \ra_{L^2} 
		\quad {\rm for}\ u, v \in \Ha
	\]
is equivalent to $\| \cdot \|_\alpha$ since $0 < \delta_0 < 1$. 
Hence, \eqref{eq:62} has a unique solution $v \in \Ha$ for any 
$g \in L^2(\RN)$ due to the Lax-Milgram Theorem and 
it is expressed as $v = K \ast g$. 
Since \eqref{eq:62} is rewritten as 
$(1-\Delta)^\alpha v = (1-\delta_0) v + g$, 
we obtain 
	\[
		v = G_{2\alpha} \ast \left( (1-\delta_0) v + g \right). 
	\]
Thus if $ g \in L^\infty(\RN) \cap L^2(\RN)$, 
using the bootstrap argument and $\scrL^p_{2\alpha} \subset W^{2\alpha,p}(\RN)$, 
one can check $v \in C^{2\beta}_{\rm b}(\RN)$ for all $\beta \in (0,2\alpha)$.

	Let us assume $g(x) \geq 0, \not \equiv 0$ and $g \in L^\infty(\RN)$. 
Since $v= G_{2\alpha} \ast (( 1 - \delta_0 ) v + g  )$, $G_{2\alpha} > 0$ 
and $(G_{2\alpha} \ast g) >0$, we observe that 
	\[
			v(x) = \int_{\RN} G_{2\alpha} (y) 
			\left\{ (1-\delta_0) v(x-y) + g(x-y) \right\} \rd y 
			> (1-\delta_0) \int_{\RN} G_{2\alpha} (y) v(x-y) \rd y.
	\]
Noting $v \in C^\beta_{\rm b}(\RN)$ and $v(x) \to 0$ as $|x| \to \infty$, 
if $v(x_0) = \min_{\RN} v$ holds for some $x_0 \in \RN$, 
then it follows from $\| G_{2\alpha} \|_{L^1} = 1$ that 
	\[
		v(x_0) > (1-\delta_0) \int_{\RN} G_{2\alpha} (y) v(x_0-y) \rd y 
		\geq (1-\delta_0) \int_{\RN} G_{2\alpha} (y) v(x_0) \rd y 
		= (1-\delta_0) v(x_0).
	\]
By $0 < \delta_0 < 1$, we get $\min_{\RN} v = v(x_0) > 0$, however, this contradicts 
$v(x) \to 0$ as $|x| \to \infty$. Hence, $v$ does not have any global minimum 
on $\RN$ and this asserts $v(x) > 0$ for each $x \in \RN$.

		Finally, when $g(x)$ has the compact support, 
since $v = K \ast g$ and $K$ decays faster than any polynomial 
thanks to (i), it is easily seen that 
$v(x)$ also decays faster than any polynomial.

	(iii) Set $w := v_2 - v_1$. We observe that $w$ satisfies 
	\[
		(1-\Delta)^\alpha w - (1-\delta_0) w = g_2 - g_1 \geq 0 \quad {\rm in}\ \RN.
	\]
Using (ii), one has $w \geq 0$ in $\RN$ and 
$v_2 (x) \geq v_1(x)$ in $\RN$. 
	\end{proof}

	Next, in order to prove Proposition \ref{definition:3.5} and \eqref{eq:34}, 
we consider the extension problem observed in \cite{FF-15} (cf. \cite{CS-07}). 
For $X=(x,t) \in \ERN := \RN \times (0,\infty)$ and $u \in \Ha$, consider 
	\begin{equation}\label{eq:63}
		\left\{\begin{aligned}
			t^{1-2\alpha} (- \Delta_x + 1) w - (t^{1-2\alpha} w_t )_t &= 0 
			& &{\rm in}\ \ERN,\\
			w &= u & &{\rm on}\ \RN
		\end{aligned}\right.
	\end{equation}
where $\Delta_x = \sum_{i=1}^N \partial_{x_i}^2$. We set 
	\[
		X^\alpha := \left\{ w(x,t) :  \ERN \to \R \ |\ 
		\| w \|_{X^\alpha} < \infty  \right\}, 
		\quad 
		\| w \|_{X^\alpha}^2 := \int_{\ERN} 
		t^{1-2\alpha} \left( |\nabla w |^2 + w^2  \right) \rd X
	\]
where $\nabla = (\nabla_x , \partial_t)$. 
First we collect some facts. See, for instance, \cite{DD,FF-15}.

	\begin{proposition}\label{definition:A.2}
		\emph{(i)} There exists the trace operator $\Tr: X^\alpha \to \Ha$. 
		
		\emph{(ii)} For any $u \in \Ha$, \eqref{eq:63} has a unique solution 
		$w = Eu \in X^\alpha$. Furthermore, there exists a $\kappa_\alpha>0$ 
		such that $Eu$ satisfies 
			\[
				\int_{\ERN} t^{1-2\alpha} 
				\left( \nabla Eu \cdot \nabla \varphi + Eu \varphi  \right) 
				\rd X = \kappa_\alpha \la u,\Tr \varphi \ra_\alpha
			\]
		for all $u \in \Ha$ and $\varphi \in X^\alpha$. 
		
		\emph{(iii)} For every $u \in \Ha$ and $w \in X^\alpha$ with $\Tr w = u$, 
		one has 
			\[
				\kappa_\alpha \| u \|_{\alpha}^2 = 
				\| Eu \|_{X^\alpha}^2 \leq \| w \|_{X^\alpha}^2. 
			\]
		
		\emph{(iv)} If $u \in \Ha$ with $u \geq 0$, then $Eu \geq 0$ in $\ERN$. 
	\end{proposition}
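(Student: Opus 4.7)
The plan is to treat the extension problem in Fourier variables, reducing \eqref{eq:63} to a family of ODEs parametrized by $\xi \in \RN$ and solving them explicitly in terms of modified Bessel functions. Writing $\lambda(\xi) := (1+4\pi^2|\xi|^2)^{1/2}$ and applying $\scrF$ in $x$, \eqref{eq:63} becomes
\[
\partial_t^2 \wh{w} + \frac{1-2\alpha}{t}\partial_t \wh{w} - \lambda(\xi)^2 \wh{w} = 0, \qquad \wh{w}(\xi,0) = \wh{u}(\xi).
\]
The substitution $\wh{w}(\xi,t) = (\lambda(\xi)t)^\alpha \phi(\lambda(\xi) t)$ converts this to the modified Bessel equation $s^2\phi'' + s\phi' - (s^2+\alpha^2)\phi = 0$, whose solution decaying at infinity is $\phi(s)=c_\alpha K_\alpha(s)$ with $c_\alpha := 2^{1-\alpha}/\Gamma(\alpha)$ chosen so that $(\lambda t)^\alpha K_\alpha(\lambda t) \to 2^{\alpha-1}\Gamma(\alpha)$ as $t\downarrow 0$, matching the boundary condition. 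This gives the explicit formula $\wh{Eu}(\xi,t) = c_\alpha(\lambda(\xi)t)^\alpha K_\alpha(\lambda(\xi)t)\wh{u}(\xi)$. Using the identity $\partial_s[s^\alpha K_\alpha(s)] = -s^\alpha K_{\alpha-1}(s)$ together with Plancherel and the change of variable $s = \lambda(\xi) t$, I verify $Eu\in X^\alpha$ and
\[
\|Eu\|_{X^\alpha}^2 = c_\alpha^2 \left(\int_0^\infty s\bigl(K_\alpha(s)^2+K_{\alpha-1}(s)^2\bigr)\rd s\right) \int_{\RN}\lambda(\xi)^{2\alpha}|\wh{u}|^2 \rd\xi = \kappa_\alpha\|u\|_\alpha^2
\]
for an explicit finite $\kappa_\alpha > 0$. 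The integration-by-parts identity in (ii) then follows by multiplying \eqref{eq:63} by $\varphi$, integrating by parts in $t$, noting that the bulk integrand vanishes by the equation, and reading off $\lim_{t\downarrow 0} t^{1-2\alpha}\partial_t\wh{Eu}(\xi,t) = -\kappa_\alpha \lambda(\xi)^{2\alpha}\wh{u}(\xi)$ from the Bessel asymptotics, which produces the boundary contribution $\kappa_\alpha \la u,\Tr\varphi\ra_\alpha$.

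For (i), I would define the trace first on $C^1$ functions by pointwise restriction and extend to $X^\alpha$ by density, using the bound $\|\Tr w\|_\alpha^2 \leq \kappa_\alpha^{-1}\|w\|_{X^\alpha}^2$ supplied by (iii). For (iii), given $w\in X^\alpha$ with $\Tr w = u$, set $v := w - Eu$, so that $\Tr v = 0$. By approximation in $X^\alpha$ by functions vanishing near $\{t=0\}$, $v$ is admissible as a test function in the identity of (ii), yielding $\int_{\ERN} t^{1-2\alpha}(\nabla Eu\cdot\nabla v + Eu\, v)\rd X = \kappa_\alpha\la u,0\ra_\alpha = 0$. Expanding
\[
\|w\|_{X^\alpha}^2 = \|Eu\|_{X^\alpha}^2 + 2\int_{\ERN} t^{1-2\alpha}(\nabla Eu\cdot\nabla v + Eu\, v)\rd X + \|v\|_{X^\alpha}^2 = \|Eu\|_{X^\alpha}^2 + \|v\|_{X^\alpha}^2
\]
produces the desired inequality.

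Part (iv) is the step I expect to be the main obstacle. The approach is a Stampacchia-type truncation: for $u\geq 0$ one has $\Tr(Eu)_- \equiv 0$, hence, after justifying that $(Eu)_- \in X^\alpha$ with $\nabla(Eu)_- = -\chi_{\{Eu<0\}}\nabla Eu$, one may use $(Eu)_-$ as a test function in the identity of (ii) to obtain
\[
0 = \kappa_\alpha\la u,\Tr(Eu)_-\ra_\alpha = -\int_{\ERN} t^{1-2\alpha}\bigl(|\nabla(Eu)_-|^2 + (Eu)_-^2\bigr)\rd X,
\]
which forces $(Eu)_-\equiv 0$, hence $Eu \geq 0$. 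The technical points are that truncation preserves the weighted Sobolev space $X^\alpha$ and that the trace operator commutes with taking positive/negative parts; both follow from the density of $C_0^\infty(\overline{\ERN})$ in $X^\alpha$ and from the fact that $t^{1-2\alpha}$ lies in the $A_2$ Muckenhoupt class for $\alpha\in(0,1)$, so the classical weighted Sobolev chain rule applies.
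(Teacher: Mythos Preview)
The paper does not prove this proposition; it is stated as a collection of known facts with the sentence ``First we collect some facts. See, for instance, \cite{DD,FF-15}.'' Your proposal supplies the actual argument, and it is the standard one found in those references: Fourier-transform in $x$, solve the resulting Bessel ODE in $t$ via $K_\alpha$, read off the constant $\kappa_\alpha$ from the small-$t$ asymptotics, then use the variational characterization of $Eu$ for (iii) and the truncation $(Eu)_-$ for (iv). One minor organizational point: as written, your part (i) invokes the bound from (iii), which itself presupposes the trace operator, so you should first establish the trace inequality $\kappa_\alpha\|\Tr w\|_\alpha^2 \leq \|w\|_{X^\alpha}^2$ for smooth $w$ (e.g.\ directly from the explicit Bessel formula, or by the elementary one-dimensional inequality $|w(\xi,0)|^2 \leq C_\alpha \int_0^\infty t^{1-2\alpha}(\lambda^2|w|^2+|\partial_t w|^2)\rd t$ applied to $\wh{w}(\xi,\cdot)$), and only then extend $\Tr$ by density. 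Apart from this ordering, the argument is correct.
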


Using these properties, we first show \eqref{eq:34}, namely, 

	\begin{lemma}\label{definition:A.3}
		For any $u \in \Ha$, $\| |u| \|_\alpha \leq \| u \|_\alpha$. 
		Moreover, the map $u \mapsto |u| : H^\alpha(\RN) \to H^\alpha(\RN)$ 
		is continuous. 
	\end{lemma}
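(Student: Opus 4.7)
The natural tool is the extension characterization of the $\Ha$ norm provided by Proposition \ref{definition:A.2}. For any $u \in \Ha$, set $w := |Eu|$. Because $s \mapsto |s|$ is $1$-Lipschitz, Stampacchia's chain rule for weakly differentiable functions (which carries over to the weighted Sobolev space $X^\alpha$ since $t^{1-2\alpha}$ is an $A_2$-Muckenhoupt weight for $0<\alpha<1$) gives $w \in X^\alpha$ together with the pointwise bound $|\nabla w| \leq |\nabla Eu|$ a.e.\ in $\ERN$; combined with $w^2 = (Eu)^2$, this yields $\|w\|_{X^\alpha} \leq \|Eu\|_{X^\alpha}$. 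A density argument — approximating $u$ by $u_n \in C_0^\infty(\RN)$, for which $Eu_n$ is classically smooth up to $\{t=0\}$ and satisfies $\Tr|Eu_n| = |u_n|$ — together with the continuity of $\Tr$ shows that $\Tr w = |u|$ as an element of $\Ha$. In particular $|u| \in \Ha$, and applying Proposition \ref{definition:A.2}(iii) to $|u|$ with competitor $w$ gives
\[
\kappa_\alpha \bigl\| |u| \bigr\|_\alpha^2 \;=\; \|E|u|\|_{X^\alpha}^2 \;\leq\; \|w\|_{X^\alpha}^2 \;\leq\; \|Eu\|_{X^\alpha}^2 \;=\; \kappa_\alpha \|u\|_\alpha^2,
\]
which is the desired inequality.

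For the continuity assertion, let $u_n \to u$ in $\Ha$. By the identity $\|Ev\|_{X^\alpha}^2 = \kappa_\alpha \|v\|_\alpha^2$ applied to $v = u_n - u$, one has $Eu_n \to Eu$ strongly in $X^\alpha$. Next I would invoke the continuity of the map $w \mapsto |w|$ on $X^\alpha$: approximating $|\cdot|$ by the smooth Lipschitz functions $\phi_\e(s) := \sqrt{s^2+\e^2} - \e$, one checks via dominated convergence that $w \mapsto \phi_\e(w)$ is continuous on $X^\alpha$ for each fixed $\e > 0$ (using $|\phi_\e'| \leq 1$), and that $\phi_\e(w) \to |w|$ in $X^\alpha$ as $\e \downarrow 0$ uniformly on bounded subsets (exploiting the fact that $\nabla |Ev| = 0$ a.e.\ on $\{Ev = 0\}$). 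A standard $3\e$-argument then yields $|Eu_n| \to |Eu|$ in $X^\alpha$. Applying the bounded trace operator and the identity $\Tr(|Ev|) = |v|$ established above, we conclude
\[
|u_n| = \Tr(|Eu_n|) \;\longrightarrow\; \Tr(|Eu|) = |u| \quad \text{in } \Ha.
\]

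The main technical point is the Stampacchia chain rule and the continuity of $w \mapsto |w|$ in $X^\alpha$, both classical for weighted Sobolev spaces with $A_2$-weights but needing to be invoked carefully to avoid circular reasoning. Once they are in hand, the remaining arguments reduce to extension/trace bookkeeping via Proposition \ref{definition:A.2}. Notice that the $\ge 0$ case of (iv) in Proposition \ref{definition:A.2} is not strictly needed here: what is used is only the variational/minimizing characterization in (iii) together with the isometry $\|E(\cdot)\|_{X^\alpha}^2 = \kappa_\alpha \|\cdot\|_\alpha^2$, which are the heart of the extension framework.
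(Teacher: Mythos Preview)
Your proof is correct and follows essentially the same route as the paper: set $w=|Eu|$, note $w\in X^\alpha$ with $\|w\|_{X^\alpha}\le\|Eu\|_{X^\alpha}$ and $\Tr w=|u|$, then invoke the minimizing characterization in Proposition~\ref{definition:A.2}(iii); for continuity, lift to $X^\alpha$ via $E$, use continuity of $w\mapsto|w|$ there, and descend via $\Tr$. The paper's argument is terser (it simply asserts $\|\,|Eu|\,\|_{X^\alpha}=\|Eu\|_{X^\alpha}$ and $|Eu_n|\to|Eu_0|$ in $X^\alpha$ without the Stampacchia/$A_2$ or $3\e$ justifications you supply), but the skeleton is identical.
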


	\begin{proof}
Let $u \in \Ha$. Then it is easily seen that 
$\| \, |Eu| \, \|_\alpha = \| Eu \|_\alpha < \infty$, hence, $|Eu| \in X^\alpha$. 
We can also check that $\Tr |Eu| = |u|$. 
Thus, by Proposition \ref{definition:A.2} (iii), we have 
$\kappa_\alpha \| |u| \|_\alpha^2 \leq \| \, |Eu| \,\|_{X^\alpha}^2 
= \| Eu \|_{X^\alpha}^2 = \kappa_\alpha \| u \|_\alpha^2$. 
For the continuity of the map $u \mapsto |u|$, 
let $u_n \to u_0$ in $\Ha$. 
From $Eu_n \to Eu_0$ in $X^\alpha$ due to Proposition \ref{definition:A.2} (iii), 
we have $|Eu_n| \to |Eu_0|$ in $X^\alpha$. 
By $\Tr |E u_n| = |u_n|$ and the boundedness of $\Tr$, 
we have $|u_n| = \Tr |E u_n| \to \Tr |E u_0| = |u_0|$ 
in $\Ha$. 
	\end{proof}

Now we prove Proposition \ref{definition:3.5}.

\begin{proof}[Proof of Proposition \ref{definition:3.5}]
The argument is similar to the case $\alpha = 1$ (see \cite{BK-79}). 
For $k \in \N$, set 
	\[
		a_k (x) := k \quad {\rm if} \ a(x) \geq k, \quad 
		:= a(x) \quad {\rm if} \ |a(x)| < k, \quad 
		:= - k \quad {\rm if} \ a(x) \leq -k.
	\]
Then thanks to \eqref{eq:36} and $A \in L^{N/(2\alpha)}(\RN)$, we have 
	\begin{equation}\label{eq:64}
		| a (x) - a_k(x) | \leq C_0 A(x) \quad 
		{\rm for\ each}\ (x,k) \in \RN \times [C_0,\infty) , \quad 
		\| a - a_k \|_{L^{N/(2\alpha)} (\RN) } \to 0.
	\end{equation}
The first step is to show: 

\medskip

\noindent
{\bf Step 1:}
{\sl For any $\e>0$ there exists a $\lambda_\e>0$ such that 
		\[
			\int_{\RN} |a| v^2 \rd x + \int_{\RN} |a_k| v^2 \rd x 
			\leq \e [ v ]^2_{H^\alpha } + 
			\lambda_\e \| v \|_{L^2}^2 
			\quad {\rm for\ all} \  v \in \Ha, \ k \geq 1
		\]
where $[u]^2_{H^\alpha} 
:= \int_{\RN \times \RN} |u(x)-u(y)|^2 / |x-y|^{N+2\alpha} \rd x \rd y$. 
}

\medskip

From \eqref{eq:36} and the definition of $a_k$, it follows that 
$|a(x)| + |a_k(x)| \leq C_0(1+A(x))$. 
Therefore, it suffices to prove 
	\begin{equation}\label{eq:65}
		\int_{\RN} A v^2 \rd x  \leq  \e [v]_{H^\alpha}^2 + 
		\lambda_\e \| v \|_{L^2}^2 .
	\end{equation}

We first notice that for $n \geq 1$, 
	\[
		\int_{\RN} A(x) v^2 \rd x 
		= \int_{[A<n]} A(x) v^2 \rd x + \int_{[A \geq n]} A(x) v^2 \rd x 
		\leq n \| v \|_{L^2}^2 + \int_{[A \geq n]} A(x) v^2 \rd x. 
	\]
Using H\"older's inequality and Sobolev's inequality 
for the second term, we obtain 
	\[
		\int_{[A \geq n]} A(x) v^2 \rd x 
		\leq  \| A \|_{L^{N/(2\alpha)} ([A \geq n]) } \| v \|_{L^{2_\alpha^\ast}}^2 
		\leq  C_S \| A \|_{L^{N/(2\alpha)} ([A \geq n]) } [v]_{H^\alpha}^2. 
	\]
Thus 
	\[
		\int_{\RN} A(x) v^2 \rd x  
		\leq  n \| v \|_{L^2}^2 + C_S \| A \|_{L^{N/(2\alpha)} ([A \geq n]) } [v]_{H^\alpha}^2.
	\]
Since $A \in L^{N/(2\alpha)}(\RN)$, choosing $n$ large enough, 
we get \eqref{eq:65} and Step 1 holds.

\medskip

\noindent
{\bf Step 2:} 
{\sl The operators $(1-\Delta)^\alpha - a(x) + \lambda_\e$ 
and $(1-\Delta)^\alpha - a_k(x) + \lambda_\e$ are 
coercive on $\Ha$ for all sufficiently small $\e>0$. 
}

\medskip

By Step 1, for sufficiently small $\e>0$, one sees that 
	\begin{equation}\label{eq:66}
		\begin{aligned}
			& \int_{\RN} (1 + 4 \pi^2 |\xi|^2)^\alpha |\wh{v}(\xi)|^2 \rd \xi 
			- \int_{\RN} a(x) v^2 \rd x + \lambda_\e \| v \|_{L^2}^2 
			\\
			\geq &\, \int_{\RN} (1 + 4 \pi^2 |\xi|^2)^\alpha |\hat{v}(\xi)|^2 \rd \xi 
			- \e [ v ]_{H^\alpha}^2  
			\geq 
			\frac{1}{2} \int_{\RN} (1 + 4 \pi^2 |\xi|^2)^\alpha |\wh{v}(\xi)|^2 \rd \xi
		\end{aligned}
	\end{equation}
and 
	\begin{equation}\label{eq:67}
		\int_{\RN} (1 + 4 \pi^2 |\xi|^2 )^\alpha |\wh{v}(\xi)|^2 \rd \xi 
		- \int_{\RN} a_k(x) v^2 \rd x + \lambda_\e \| v \|_{L^2}^2 
		\geq \frac{1}{2} \int_{\RN} (4 \pi^2 |\xi|^2 + m^2)^\alpha |\wh{v}(\xi)|^2 \rd \xi. 
	\end{equation}
Hence, Step 2 holds.

\medskip

Rewrite \eqref{eq:35} as follows:
	\begin{equation}\label{eq:68}
		(1-\Delta)^\alpha u - a(x) u +  \lambda_\e u =  \lambda_\e u
		\quad {\rm in}\ \RN.
	\end{equation}
Noting Step 2, we may find a unique solution $\psi_k \in H^\alpha(\RN)$ of
	\[
		(1 -\Delta )^\alpha \psi_k - a_k(x) \psi_k + \lambda_\e \psi_k = \lambda_\e u
		\quad {\rm in}\ \RN
	\]
for sufficiently small $\e>0$. 
From \eqref{eq:67}, one observes that $(\psi_k)$ is bounded in $H^\alpha(\RN)$. 
Furthermore, since $u$ is a unique solution of \eqref{eq:68} thanks to \eqref{eq:66}, 
we also see from \eqref{eq:64} that 
	\begin{equation}\label{eq:69}
		\psi_k \to u \quad {\rm strongly \ in} \ H^\alpha(\RN). 
	\end{equation}

	Next, let $w_{k} \in X^\alpha$ be a unique solution of \eqref{eq:63} with $u=\psi_k$. 
For $n \in \N$, set 
	\[
		\psi_{k,n}(x) := 
		\left\{\begin{aligned}
			& n & &{\rm if} \ \psi_k(x) \geq n,\\
			& \psi_k(x) & &{\rm if} \ |\psi_k(x)| < n,\\
			& -n & &{\rm if} \ \psi_k(x) \leq -n,
		\end{aligned}
		\right.
		\quad 
		w_{k,n}(X) := 
		\left\{\begin{aligned}
		& n & &{\rm if} \ w_k(X) \geq n,\\
		& w_k(X) & &{\rm if} \ |w_k(X)| < n,\\
		& -n & &{\rm if} \ w_k(X) \leq -n,
		\end{aligned}
		\right.
	\]
Remark that for every $p \geq 2$, 
	\[
		\begin{aligned}
			&|w_{k,n}|^{p-2} w_{k,n} \in X^\alpha \cap L^\infty(\ERN), 
			\quad 
			|\psi_{k,n}|^{p-2} \psi_{k,n} \in \Ha \cap L^\infty(\RN), 
			\\
			&
			\Tr ( |w_{k,n}|^{p-2} w_{k,n} ) = |\psi_{k,n}|^{p-2} \psi_{k,n}, \quad 
			\| w_{k,n} - w_k \|_{X^\alpha} \to 0, \quad 
			\| \psi_{k,n} - \psi_k \|_{\alpha}  \to 0. 
		\end{aligned}
	\]

\medskip

\noindent
{\bf Step 3:}
{\sl Assume that $u, \psi_k \in L^p(\RN)$ and 
$\psi_k \to u$ strongly in $L^p(\RN)$ for some $p > 2$. 
Then  
	\[
		|u|^{p/2} \in H^\alpha(\RN), \quad 
		\kappa_\alpha \| |u|^{p/2} \|_{H^\alpha}^2 
		\leq \| |w|^{p/2} \|_{X^\alpha}^2 \leq C_1 \| u \|_{L^{p}}^{p} 
		\quad \text{\sl where} \ w = Eu \in X^\alpha.
	\]
}

\medskip

	We use $|w_{k,n}|^{p-2} w_{k,n}$ as a test function to \eqref{eq:63} 
with $w=w_k$ to get 
	\[
		\begin{aligned}
		&\int_{\ERN} t^{1-2\alpha} 
		\left\{ \nabla w_k \cdot \nabla (|w_{k,n}|^{p-2} w_{k,n}) 
		+  w_k |w_{k,n}|^{p-2} w_{k,n} \right\} 
		\rd X 
		\\
		= \, & \kappa_\alpha \int_{\RN} 
		\left( a_k \psi_k - \lambda_\e \psi_k + \lambda_\e u \right) 
		|\psi_{k,n}|^{p-2} \psi_{k,n} \rd x. 
		\end{aligned}
	\]
Notice that 
	\[
		\nabla w_k \cdot \nabla (|w_{k,n}|^{p-2} w_{k,n} ) 
		= (p-1) |w_{k,n}|^{p-2} | \nabla w_{k,n} |^2 
		= \frac{4}{p^2}(p-1) | \nabla (|w_{k,n}|^{p/2}) |^2. 
	\]
Furthermore, by $|w_{k,n}| \leq |w_k|$, 
$|\psi_{k,n}| \leq |\psi_k|$, 
$w_k w_{k,n} = |w_k||w_{k,n}|$, 
$\psi_{k,n} \psi_k = |\psi_{k,n}||\psi_k|$, $|a_k(x)| \leq k$ 
and $|\psi_{k,n}|^{p/2} \in \Ha$, 
it follows from Step 1 that 
	\[
		\begin{aligned}
			\int_{\ERN} t^{1-2\alpha} w_k |w_{k,n}|^{p-2} w_{k,n} \rd X  
			&\geq \int_{\ERN} t^{1-2\alpha} (|w_{k,n}|^{p/2})^2 \rd X, 
			\\ 
			\int_{\RN} \lambda_\e \psi_k |\psi_{k,n}|^{p-2} \psi_{k,n} \rd x 
			 &\geq \int_{\RN} \lambda_\e |\psi_{k,n}|^{p} \rd x ,
			\\
			\int_{\RN} a_k \psi_k |\psi_{k,n}|^{p-2} \psi_{k,n}  \rd x
			&= \left(\int_{[|\psi_k| < n]} + \int_{[|\psi_{k}| \geq n]}\right) 
			a_k \psi_k |\psi_{k,n}|^{p-2} \psi_{k,n}  \rd x 
			\\
			& \leq \int_{\RN} |a_k| (|\psi_{k,n}|^{p/2})^2 \rd x 
			+ n^{p-1} k \int_{[|\psi_{k}| \geq n]} |\psi_k| \rd x
			\\
			& \leq \e [|\psi_{k,n}|^{p/2}]_{H^\alpha}^2 
			+ \lambda_\e \int_{\RN} |\psi_{k,n}|^p \rd x 
			+ k n^{p-1} \int_{[|\psi_{k}| \geq n]} |\psi_k| \rd x.
		\end{aligned}
	\]
Therefore, we obtain 
	\[
		\begin{aligned}
			& \int_{\ERN} t^{1-2\alpha} 
			\left\{ \frac{4}{p^2}(p-1) | \nabla (|w_{k,n}|^{p/2} ) |^2 
			+ (|w_{k,n}|^{p/2})^2 \right\} 
			\rd X  
			\\
			\leq & \,  \kappa_\alpha 
			\left[ 
			\e [|\psi_{k,n}|^{p/2}]_{H^\alpha}^2  
			+ k n^{p-1} \int_{[|\psi_k| \geq n]} |\psi_k| \rd x
			+ \lambda_\e \int_{\RN} |u| |\psi_{k,n}|^{p-1} \rd x
			\right] . 
		\end{aligned}
	\]
Since Proposition \ref{definition:A.2} asserts 
	\[
		\kappa_\alpha \| |\psi_{k,n}|^{p/2} \|_{H^\alpha}^2 
		\leq \| |w_{k,n}|^{p/2} \|_{X^\alpha}^2, 
		\quad n^{p-1} \leq |\psi_k|^{p-1} \quad {\rm on}\ 
		[|\psi_k| \geq n],
	\]
choosing 
	\[
		\e = \frac{1}{2} \frac{4(p-1)}{p^2}, 
	\]
we finally obtain 
	\begin{equation}\label{eq:70}
		\| |w_{k,n}|^{p/2} \|_{X^\alpha}^2 
		\leq C_{p,\alpha} 
		\left[  k \int_{[|\psi_k| \geq n]} |\psi_k|^p \rd x
		+ \int_{\RN} |u| |\psi_{k,n}|^{p-1} \rd x   \right].
	\end{equation}

	Now let us consider the case where $u,\psi_k \in L^p(\RN)$ and 
$\psi_k \to u$ strongly in $L^p(\RN)$. 
By H\"older's inequality and the definition of $\psi_{k,n}$, we have 
	\[
		\int_{\RN} |u| |\psi_{k,n}|^{p-1} \rd x 
		\leq \| u \|_{L^p} \| \psi_{k} \|_{L^{p}}^{p-1}
	\]
and the right hand side in \eqref{eq:70} is bounded as $n \to \infty$. 
Since $w_{k,n} \to w_k$ strongly in $X^\alpha$, we observe that 
$|w_{k,n}|^{p/2} \rightharpoonup |w_k|^{p/2}$ weakly in $X^\alpha$. 
Letting $n \to \infty$ in \eqref{eq:70}, one has
	\begin{equation}\label{eq:71}
		\| |w_{k}|^{p/ 2} \|_{X^\alpha}^2 
		\leq C_1 \| u \|_{L^{p}} \| \psi_k \|_{L^{p}}^{p-1}.
	\end{equation}
Since $\Tr (|w_k|^{p/2}) = |\psi_k|^{p/2}$, Proposition \ref{definition:A.2} gives 
	\[
		\kappa_\alpha \| |\psi_k|^{p/2} \|_{H^\alpha}^2 
		\leq \| |w_k|^{p/2} \|_{X^\alpha}^2 
		\leq C_1 \| u \|_{L^{p}} \| \psi_k \|_{L^{p}}^{p-1}.
	\]
Thus by Sobolev's inequality, we get 
	\begin{equation}\label{eq:72}
		\| |\psi_k|^{p/2} \|_{L^{2_\alpha^\ast}}^2 \leq C_0 \| |\psi_k|^{p/2} \|_{H^\alpha}^2 
		\leq C \| u \|_{L^{p}} \| \psi_k \|_{L^{p}}^{p-1}. 
	\end{equation}
Recalling $\psi_k \to u$ strongly in $L^p(\RN)$ and 
letting $k \to \infty$ in \eqref{eq:71},  
we observe that 
$(|w_{k}|^{p/2})$ is bounded in $X^\alpha$, 
$|w_{k}|^{p/2} \rightharpoonup |w|^{p/2}$ weakly in $X^\alpha$ and 
	\[
		\| |w|^{p/2} \|_{X^\alpha}^2 \leq C_1 \| u \|_{L^{p}}^{p}
	\]
where $w = Eu \in X^\alpha$. 
Noting ${\rm Tr}\, (|w|^{p/2}) = |u|^{p/2}$, we have 
	\[
		|u|^{p/2} \in H^\alpha(\RN), \quad 
		\kappa_\alpha \| |u|^{p/2} \|_{H^\alpha}^2 
		\leq \| |w|^{p/2} \|_{X^\alpha}^2 \leq C_1 \| u \|_{L^{p}}^{p}
	\]
and Step 3 holds.

\medskip

\noindent
{\bf Step 4:} {\sl Conclusion}

\medskip

By Step 3 and \eqref{eq:72}, if $u,\psi_k \in L^p(\RN)$ and $\psi_k \to u$ strongly in 
$L^p(\RN)$, then 
	\begin{equation}\label{eq:73}
		\| |u|^{p/2} \|_{L^{2^\ast_\alpha}}^2 
		\leq C \| |u|^{p/2} \|_{H^\alpha}^2 
		\leq C \kappa_\alpha^{-1} \| |w|^{p/2} \|_{X^\alpha}^2 
		\leq C \| u \|_{L^p}^p, \quad 
		\| |\psi_{k}|^{p/2} \|_{L^{2_\alpha^\ast}}^2 
		\leq C \| u \|_{L^p}\| \psi_k \|_{L^{p}}^{p-1}. 
	\end{equation}
Now we select $p = p_1 := 2_\alpha^\ast > 2$. 
From \eqref{eq:69}, the assumptions of Step 3 are satisfied and 
	\[
		\| |u|^{p_1/2} \|_{L^{2^\ast_\alpha}}^2 
		\leq C \| |u|^{p_1/2} \|_{H^\alpha}^2 
		\leq C \kappa_\alpha^{-1} \| |w|^{p_1/2} \|_{X^\alpha}^2 
		\leq C \| u \|_{L^{p_1}}^{p_1}, \quad 
		\| |\psi_{k}|^{p_1/2} \|_{L^{2_\alpha^\ast}}^2 
		\leq C \| u \|_{L^{p_1}}\| \psi_k \|_{L^{p_1}}^{p_1-1}. 
	\]
From this, one observes that the assumptions of Step 3 holds 
for any $2 \leq p < p_1 2^\ast_\alpha / 2$. Hence, setting 
$p_2 := p_1 2^\ast_\alpha /2$, \eqref{eq:73} holds for each $p < p_2$. 
Again, the assumptions of Step 3 hold for each $p < p_3 := p_2 2^\ast_\alpha / 2$. 
Repeating this argument and noting $2^\ast_\alpha /2 > 1$, we observe that 
\eqref{eq:73} holds for any $p < \infty$, which implies 
$u \in L^p(\RN)$ for any $2 \leq p < \infty$. 
This completes the proof. 
\end{proof}

\subsection*{Acknowledgement}
The author would like to thank Professor Tatsuki Kawakami, 
Professor Tohru Ozawa and Professor Kazunaga Tanaka 
for valuable comments and fruitful discussions on the topic of 
this paper.

\end{document}